%%%

\documentclass[11pt]{amsart}
\usepackage{amsfonts,latexsym,amsthm,amssymb,graphicx}
\usepackage[all]{xy}
\usepackage[usenames]{color} 
\usepackage{booktabs}
\usepackage{hyperref}
\definecolor{PineGreen}{rgb}{0.0,0.47,0.44}
\definecolor{MidnightBlue}{rgb}{0.1,0.1,0.44}
\definecolor{magenta}{rgb}{1.0,0.0,1.0}
\hypersetup{
  colorlinks=true,       
  linkcolor=MidnightBlue,
  citecolor=PineGreen,   
  filecolor=magenta,     
  urlcolor=MidnightBlue  
}

\DeclareFontFamily{OT1}{rsfs}{}
\DeclareFontShape{OT1}{rsfs}{n}{it}{<-> rsfs10}{}
\DeclareMathAlphabet{\mathscr}{OT1}{rsfs}{n}{it}

\setlength{\textwidth}{6 in}
\setlength{\textheight}{8.75 in}
\setlength{\topmargin}{-0.25in}
\setlength{\oddsidemargin}{0.25in}
\setlength{\evensidemargin}{0.25in}

\CompileMatrices

\newtheorem{theorem}{Theorem}[section]
\newtheorem{lemma}[theorem]{Lemma}
\newtheorem{corol}[theorem]{Corollary}
\newtheorem{prop}[theorem]{Proposition}
\newtheorem{claim}[theorem]{Claim}
\newtheorem*{theorem*}{Theorem}

{\theoremstyle{definition} }
{\theoremstyle{remark} \newtheorem{remark}[theorem]{Remark}
}

\numberwithin{equation}{section}

\newcommand{\Cbb}{{\mathbb{C}}}
\newcommand{\Pbb}{{\mathbb{P}}}
\newcommand{\Qbb}{{\mathbb{Q}}}
\newcommand{\cPbb}{{\check\Pbb}}
\newcommand{\Rbb}{{\mathbb{R}}}
\newcommand{\Tbb}{{\mathbb{T}}}
\newcommand{\ch}{\check h}
\newcommand{\cE}{{\mathcal E}}
\newcommand{\cI}{{\mathscr I}}
\newcommand{\cL}{{\mathscr L}}
\newcommand{\cM}{{\mathcal M}}
\newcommand{\cO}{{\mathscr O}}
\newcommand{\cP}{{\mathcal P}}
\newcommand{\us}{\underline s}
\newcommand{\ux}{\underline x}
\newcommand{\one}{1\hskip-3.5pt1}
\newcommand{\csm}{{c_{\text{SM}}}}
\newcommand{\cf}{{c_{\text{F}}}}
\newcommand{\cma}{{c_{\text{Ma}}}}

\DeclareMathOperator{\codim}{codim}
\DeclareMathOperator{\Eu}{Eu}
\DeclareMathOperator{\Edd}{EDdeg}
\DeclareMathOperator{\gEdd}{gEDdeg}
\DeclareMathOperator{\chima}{\chi_\text{Ma}}
\DeclareMathOperator{\Sym}{Sym}

\newcommand{\qede}{\hfill$\lrcorner$}

%%%

\title{The Euclidean distance degree of smooth complex projective varieties}
\author{Paolo Aluffi}
\author{Corey Harris}
\address{
Mathematics Department, 
Florida State University,
Tallahassee FL 32306, U.S.A.
}
\email{aluffi@math.fsu.edu}
\address{
Max-Planck-Institut f\"ur Mathematik in den Naturwissenschaften,
Inselstra{\ss}e 22,
04103 Leipzig, Germany
}
\email{Corey.Harris@mis.mpg.de}

\begin{document}

\begin{abstract}
We obtain several formulas for the Euclidean distance degree (ED degree) of an
arbitrary nonsingular variety in projective space: in terms of Chern and Segre classes, 
Milnor classes, Chern-Schwartz-MacPherson classes, and an extremely simple 
formula equating the Euclidean distance degree of $X$ with the Euler 
characteristic of an open subset of~$X$.
\end{abstract}

\maketitle

%%%

\section{Introduction}\label{intro}

The {\em Euclidean distance degree\/} (ED degree) of a variety $X$ is the
number of critical points of the distance function from a general
point outside of $X$. This definition, tailored to real algebraic varieties,
may be adapted for complex projective varieties, and this is the context
in which we will work in this paper. The ED degree is studied thoroughly 
in~\cite{MR3451425}, which provides a wealth of examples, results, and 
applications. In particular, \cite[Theorem~5.4]{MR3451425} states that the 
ED degree of a complex projective variety $X\subseteq\Pbb^{n-1}$ equals 
the sum of its `polar degrees', provided that the variety satisfies a technical 
condition related to its intersection with the {\em isotropic quadric,\/} i.e., 
the quadric $Q$ with equation $x_1^2+\cdots +x_n^2=0$. 
As a consequence, a formula is obtained (\cite[Theorem~5.8]{MR3451425}) 
computing the Euclidean distance degree of a {\em nonsingular\/} variety 
$X$, assuming that $X$ intersects $Q$ transversally, i.e., under the assumption 
that $Q\cap X$ is a nonsingular hypersurface of $X$.
This number is a certain combination of the degrees of the components of the 
{\em Chern class\/} of $X$ (see~\eqref{eq:gEdd}); we call this number the 
`generic Euclidean distance degree' of $X$, $\gEdd(X)$, since it equals the 
Euclidean distance degree of a general translate of $X$.

There are several directions in which this formula could be generalized.
For example, the hypothesis of nonsingularity on $X$ could be relaxed; it is then
understood that the role of the Chern class of $X$ is taken by the so-called
{\em Chern-Mather\/} class of $X$, one of several generalizations of the notion
of Chern class to possibly singular $X$. The resulting formula (see e.g., 
\cite[Proposition~2.9]{produa}) gives the generic ED degree of an arbitrarily 
singular variety~$X$. In a different direction, one
could maintain the nonsingularity hypothesis, but attempt to dispose of any
requirement regarding the relative position of $Q$ and $X$, and aim at
computing the `actual' ED degree of $X$.

The main result of this note is of this second type. It consists of formulas for 
the Euclidean distance degree of an {\em arbitrary nonsingular subvariety\/} 
of projective space; different versions are presented, in terms of different
types of information that may be available on $X$. The simplest form of the 
result is the following:
\begin{theorem*}
Let $X$ be a smooth subvariety of $\Pbb^{n-1}$, and assume $X\not\subseteq Q$. Then 
\begin{equation}\label{eq:Eulerin}
\Edd(X) = (-1)^{\dim X} \chi\big(X\smallsetminus (Q\cup H)\big)
\end{equation}
where $H$ is a general hyperplane.
\end{theorem*}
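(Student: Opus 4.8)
The plan is to start from the known formula for the generic ED degree and the characterization of the actual ED degree as a sum of polar degrees, then account for the discrepancy coming from the non-transversal intersection $Q\cap X$ via an Euler-characteristic computation. Concretely, I would first recall from \cite{MR3451425} that $\Edd(X)$ equals the sum of the polar degrees of $X$ whenever the relevant genericity condition holds, and that for an arbitrary nonsingular $X$ this sum of polar degrees can be expressed intrinsically. The polar degrees of $X\subseteq\Pbb^{n-1}$ are determined by the projective duality setup: the sum of all polar degrees of a projective variety is (up to sign) the Euler characteristic of the complement of a general hyperplane section, $\chi(X\smallsetminus H)$, by the classical results relating polar classes to the Chern-Mather class and to Euler characteristics (this is where $\csm$ or $\cma$ machinery enters). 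So the "generic" part of the argument reduces $\gEdd(X)$ to $(-1)^{\dim X}\chi(X\smallsetminus H)$.

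Next I would address the correction term. The failure of transversality of $Q$ with $X$ means the naive formula overcounts; the precise statement (following the analysis in \cite{MR3451425}, and its sheaf-theoretic refinement via conormal spaces) is that the actual ED degree is obtained from the generic one by subtracting a contribution supported on $Q\cap X$. The cleanest way to package this is to observe that the critical points of the squared-distance function from a general point, on $X\smallsetminus Q$, are exactly counted by an Euler characteristic: one sets up the incidence correspondence
\[
\{(x,u) : x\in X\smallsetminus Q,\ u-x \perp T_xX\}
\]
and uses that a general fiber over the $u$-space has cardinality $\Edd(X)$, while a general fiber over a general $\Pbb^1$ of $u$'s computes an Euler characteristic of the total space, which in turn fibers over $X\smallsetminus(Q\cup H)$ (the hyperplane $H$ coming from the general pencil) with contractible — or at least Euler-characteristic-one — fibers. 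Thus $\Edd(X)=(-1)^{\dim X}\chi(X\smallsetminus(Q\cup H))$ directly, bypassing the polar-degree bookkeeping entirely.

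I expect the main obstacle to be the justification that the relevant incidence variety is \emph{smooth} of the expected dimension over $X\smallsetminus Q$ — this uses in an essential way that $X$ is nonsingular and that we are working away from $Q$, so that the bilinear form defining "perpendicularity" is nondegenerate on the tangent spaces — together with the transversality/Bertini argument needed to conclude that a general pencil of points $u$ meets the discriminant properly, so that the Euler characteristic of the total space equals $\Edd(X)$ plus $\chi$ of a general fiber times $\chi(\text{base})$. Handling the quadric $Q$ correctly (it is where the perpendicularity form degenerates, and it is exactly the locus we excise) is the conceptual heart: removing $Q$ is what makes the count a clean topological Euler characteristic rather than a Chern-class integral with correction terms. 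Once this local structure is in place, the sign $(-1)^{\dim X}$ is the standard one from the alternating count of critical points of a Morse-type function, and \eqref{eq:Eulerin} follows.
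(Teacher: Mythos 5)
Your opening reduction is incorrect: the sum of the polar degrees is \emph{not} $(-1)^{\dim X}\chi(X\smallsetminus H)$. For a smooth plane curve $C$ of degree $d$ the polar degrees are $d$ and $d(d-1)$, so $\gEdd(C)=d^2$, whereas $-\big(\chi(C)-\chi(C\cap H)\big)=d^2-2d$. The correct statement under transversality is \eqref{eq:gEddEu}: the coefficient $(2^{j+1}-1)$ in \eqref{eq:gEdd} encodes sections by \emph{both} the quadric $Q$ and a hyperplane, so even the ``generic part'' of your argument needs $\chi\big(X\smallsetminus(Q\cup H)\big)$, not $\chi(X\smallsetminus H)$; there is no way to split off $H$ alone as you propose.

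The replacement argument you offer (``bypassing the polar-degree bookkeeping'') is where all the content would have to be, and as written it is a gap rather than a proof. The incidence set $\{(x,u):x\in X\smallsetminus Q,\ u-x\perp T_xX\}$ (taken on the cone) is an affine-space bundle over its base, so its Euler characteristic merely reproduces $\chi$ of the base; the projection to the $u$-space $\Cbb^n$ is generically $\Edd(X)$-to-one but non-proper with $\chi(\Cbb^n)=1$, so no naive multiplicativity or pencil count yields $\Edd(X)$ without controlling exactly the contributions of positive-dimensional fibers, of critical points escaping to infinity, and of the locus where $X$ is tangent to $Q$ --- which is precisely what the theorem is about. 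You assert, but do not construct, a fibration over $X\smallsetminus(Q\cup H)$ with $\chi=1$ fibers, and you give no reason why a hyperplane $H$ should emerge from a pencil of centers $u$; in the paper $H$ enters only at the last step, through the identity $\sum_j(-1)^j\csm(V)_j=\chi(V)-\chi(V\cap H)$ from \cite{MR3031565} (formula \eqref{eq:genEu}), not through any pencil. Your heuristic about $Q$ is also off: along $Q$ the issue is that $x$ itself is isotropic (so $\Cbb^n\ne\langle x\rangle\oplus x^\perp$), while degeneracy of the form on $T_xX$ corresponds to tangency of $X$ and $Q$, i.e.\ to the singular points of $Q\cap X$, and computing that contribution is the technical heart of the paper (Segre class of $J(Q\cap X)$ in Theorem~\ref{thm:segref}, via the integral-closure comparison of Lemma~\ref{lem:intdep}, then Milnor and CSM classes in Theorems~\ref{thm:milnor} and~\ref{thm:CSM}). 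A genuinely topological proof along your lines is plausible, but it requires stratified Morse theory or vanishing-cycle techniques to handle these degenerations; none of that machinery is supplied here.
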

Here $\chi$ is the ordinary topological Euler characteristic. This statement will be
proven in~\S\ref{s:Euler}; in each of~\S\S\ref{s:SH}--\ref{s:CSM} 
we obtain an equivalent formulation of the same result. These serve as stepping 
stones in the proof of~\eqref{eq:Eulerin}, and seem of independent interest. 
Theorems~\ref{thm:segref} and~\ref{thm:milnor} will express $\Edd(X)$ as a 
`correction' $\gamma(X)$ of the generic Euclidean distance degree due to the 
singularities of $Q\cap X$. For example, it will be a consequence of 
Theorem~\ref{thm:milnor} that when $Q\cap X$ has {\em isolated\/} singularities, 
then this correction equals the sum of the Milnor numbers of the singularities. 
(If $X$ is a smooth hypersurface of degree~$\ne 2$, the singularities of 
$Q\cap X$ are necessarily isolated, cf.~\S\ref{ss:hyper}.) Theorem~\ref{thm:segref} 
expresses $\gamma(X)$ in terms of the Segre class of the singularity subscheme
of $Q\cap X$; this version of the result is especially amenable to effective
implementation, using available algorithms for the computation of Segre classes 
(\cite{Harris201726}). Theorem~\ref{thm:CSM} relates the Euclidean distance 
degree to {\em Chern-Schwartz-MacPherson\/} classes, an important notion in the
theory of characteristic classes for singular or noncompact varieties. In fact, 
$\Edd(X)$ admits a particularly simple expression, given in~\eqref{eq:CSM2},
in terms of the Chern-Schwartz-MacPherson class of the nonsingular, but noncompact,
variety $X\smallsetminus Q$. Theorem~\ref{thm:Euler}, reproduced above, follows
from this expression.

The progression of results in~\S\S\ref{s:SH}--\ref{s:Euler} is preceded by a general
formula, Theorem~\ref{thm:masterf}, giving the correction term~$\gamma(X)$ for 
essentially arbitrary varieties $X$. Coupled with~\cite[Proposition~2.9]{produa},
this yields a general formula for $\Edd(X)$. This master formula is our main tool
for the applications to nonsingular varieties obtained in the sections that follow;
in principle it could be used in more general situations, but at this stage we do 
not know how to extract a simple statement such as formula~\ref{eq:Eulerin} from
Theorem~\ref{thm:masterf} without posing some nonsingularity hypothesis on $X$.

Refining the techniques used in this paper may yield more general results, but
this is likely to be challenging. Ultimately, the reason why we can obtain simple
statements such as~\eqref{eq:Eulerin} is that Segre classes of singularity subschemes
of hypersurfaces {\em of a nonsingular variety\/} $X$ are well understood. 
In general, singularities of $X$ will themselves contribute to the singularity subscheme 
of $Q\cap X$, even if the intersection of $Q$ and $X$ is (in some suitable sense) 
`transversal'. In fact, for several of our formulas to hold it is only necessary that $X$ 
be nonsingular along $Q\cap X$ (cf.~Remarks~\ref{rem:Xsin} and~\ref{rem:Xsin2}).

The raw form of our result is a standard application of Fulton-MacPherson
intersection theory, modulo one technical difficulty, which we will attempt to 
explain here. Techniques developed in~\cite{MR3451425} express the ED degree
as the degree of a projection map from a certain correspondence in $\Pbb^{n-1}
\times \Pbb^{n-1}$. Applying Fulton-MacPherson's intersection theory, one
obtains a formula for the ED degree involving the Segre class of  an associated 
subscheme~$Z^\Delta_u$ in the conormal space of $X$ (Theorem~\ref{thm:masterf});
this formula holds for arbitrary $X\not\subseteq Q$. In the nonsingular case, the
formula may be recast in terms of the Segre class {\em in $X$\/} of a scheme 
supported on the singular locus of $Q\cap X$. A somewhat surprising complication 
arises here, since this scheme does {\em not\/} coincide with the singularity 
subscheme of $Q\cap X$. However, we can prove (Lemma~\ref{lem:intdep}) 
that the ideal sheaves of the two subschemes have the same {\em integral closure,\/} 
and deduce from this that their Segre classes coincide. This is key to our explicit 
formulas.

This technical difficulty is likely one of the main obstacles in extending the results 
of this paper to the case of more general subvarieties of projective space, by 
analogous techniques. We may venture the guess that a different approach, aiming
at `understanding' \eqref{eq:Eulerin} more directly, without reference to the theory
of characteristic classes of singular varieties,
may be more amenable to generalization. Finding such an
approach would appear to be a natural project.

Preliminaries on the Euclidean distance degree are given in~\S\ref{s:EDD}. 
In~\S\ref{s:gEDdr} we point out that~\eqref{eq:Eulerin}, in its equivalent formulation
\eqref{eq:Euler2}, agrees with $\gEdd(X)$ when $X$ is nonsingular and
meets the isotropic quadric transversally. We find that this observation clarifies
why a formula such as~\eqref{eq:Eulerin} may be expected to hold without
transversality hypotheses. 
It is perhaps natural to conjecture that an analogue replacing ordinary Euler
characteristics in~\eqref{eq:Eulerin} with the degree $\chima$ of the Chern-Mather 
class may hold for arbitrary varieties. Under the transversality hypothesis, an analogue 
of~\eqref{eq:Euler2} does hold for possibly singular varieties, as we show in 
Proposition~\ref{prop:gEddsin}.
The main body of the paper consists of \S\S\ref{s:If}--\ref{s:Euler}
Examples of applications of the results obtained here are given in~\S\ref{s:Ex}.

\smallskip

{\em Acknowledgments.} This work was supported in part by NSA grant 
H98230-16-1-0016.
The authors thank Giorgio Ottaviani for suggesting the example presented 
in~\S\ref{ss:Segre}.

\section{Preliminaries on the Euclidean distance degree}\label{s:EDD}

As recalled in the introduction, the Euclidean distance degree of a variety in
$\Rbb^n$ is the number of critical nonsingular points of the (squared) distance 
function from a general point outside of the variety. We consider the complex 
projective version of this notion: for a subvariety $X\subseteq \Pbb^{n-1} :=\Pbb(\Cbb^n)$, we 
let $\Edd(X)$ be the Euclidean distance degree of the affine cone over $X$ 
in~$\Cbb^n$, that is, the number of critical points of the function
\begin{equation}\label{eq:dist}
(x_1-u_1)^2 +\cdots + (x_n-u_n)^2
\end{equation}
which occur at nonsingular points of the cone over $X$, where $(u_1,\dots,u_n)$
is a general point.

\begin{remark}\label{rem:XinQ}
If $X$ is a subset of the isotropic quadric~$Q$ (with equation $x_1^2+\cdots 
+ x_n^2=0$), then the quadratic term in~\eqref{eq:dist} vanishes, and
\eqref{eq:dist} has {\em no\/} critical points. Therefore, $\Edd(X)=0$ in this
case, and we can adopt the blanket convention that $X\not\subseteq Q$. 
With suitable positions, our results will hold without this assumption 
(cf.~e.g., Remark~\ref{rem:segreXinQ}). 
\qede\end{remark}

The definition of $\Edd(X)$ may be interpreted in terms of a {\em projective ED 
correspondence,\/} and this will be needed for our results. Our reference here is 
\cite[\S5]{MR3451425} (but we use slightly different notation).
Consider the projective space $\Pbb^{n-1}$ and its dual~$\cPbb^{n-1}$, parametrizing
hyperplanes in $\Pbb^{n-1}$. It is well-known that the projective cotangent space 
$\Tbb^*\Pbb^n:=\Pbb(T^*\Pbb^{n-1})$ of $\Pbb^{n-1}$ may be realized as the incidence 
correspondence  $I\subseteq \Pbb^{n-1}\times \cPbb^{n-1}$ consisting of pairs 
$(p,H)$ with $p\in H$. Every subvariety $X\subsetneq \Pbb^{n-1}$ has a {\em (projective) 
conormal space\/} $\Tbb^*_X\Pbb^{n-1}$, defined as the closure of the projective
conormal variety to $X$; this may be realized as
\[
\Tbb^*_X\Pbb^{n-1} := \overline{
\{ (p,H) \,|\, \text{$p\in X^{ns}$ and $T_pX\subseteq H$} \}
} \subseteq I=\Tbb^*\Pbb^{n-1}\quad.
\]
Consider the subvariety $Z\subseteq \Pbb^{n-1}\times \cPbb^{n-1}\times \Cbb^n$
obtained as the image of
\[
\{(x,y,u)\in (\Cbb^n\smallsetminus \{0\})^2\times \Cbb^n \,|\, u=x+y\}\quad;
\]
that is, 
\[
Z=\{([x],[y],u)\in \Pbb^{n-1}\times \cPbb^{n-1}\times \Cbb^n\,|\,
\dim \langle x,y,u\rangle \le 2\}
\]
consists of points $([x],[y],u)$ such that $[x],[y],[u]$ are collinear. The {\em (projective joint) 
ED correspondence\/} $\cP\cE$ (denoted $\cP\cE_{X,Y}$ in~\cite{MR3451425}) is the 
component of $(\Tbb^*_X\Pbb^{n-1}\times \Cbb^n)\cap Z$ dominating $\Tbb^*_X\Pbb^{n-1}$.
Thus, the fiber of $\cP\cE$ over $([x],[y])\in \Tbb^*_X\Pbb^{n-1}$ consists, for $[x]\ne
[y]$, of the vectors $u\in \Cbb^n$ in the span of $x$ and $y$; this confirms that $\cP\cE$ 
is irreducible and has dimension~$n$. (Since $X\not\subseteq Q$ by our blanket assumption, 
there exist points $([x],[y])\in \Tbb^*_X\Pbb^{n-1}$ with $[x]\ne [y]$.) The projection 
$\cP\cE \to \Cbb^n$ is in fact dominant, and we have the following result.

\begin{lemma}\label{lem:EDcor}
The Euclidean distance degree $\Edd(X)$ equals the degree of the projection
$\cP\cE \to \Cbb^n$.
\end{lemma}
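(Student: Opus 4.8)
The plan is to identify the fibers of the projection $\cP\cE \to \Cbb^n$ over a general point $u$ with the set of critical points of the distance function~\eqref{eq:dist} at smooth points of the affine cone over $X$, and then to check that this identification is bijective for general $u$, so that the degree of the projection (which is well-defined since the map is dominant between irreducible varieties of the same dimension~$n$) coincides with $\Edd(X)$. The essential input is the classical description, recalled from~\cite[\S5]{MR3451425}, of the critical equations: a smooth point $x$ of the cone $\hat X$ over $X$ is critical for $\|x-u\|^2$ precisely when $x-u$ is orthogonal to the tangent space $T_x\hat X$ with respect to the \emph{bilinear} form $\sum x_iy_i$; equivalently, writing $y=u-x$, the hyperplane $H$ dual to $y$ (under the identification $\cPbb^{n-1}=\Pbb((\Cbb^n)^*)$ given by the same bilinear form) contains $T_x\hat X$, hence descends to a hyperplane containing the projective tangent space $T_{[x]}X$ — that is, $([x],H)\in\Tbb^*_X\Pbb^{n-1}$.

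First I would make precise the passage between the affine and projective pictures: a point $x\ne 0$ in the cone over $X$ with $[x]\in X^{ns}$ is a smooth point of $\hat X$, and $T_x\hat X$ is the affine cone over the embedded projective tangent space; the orthogonality condition $\langle x-u,\,T_x\hat X\rangle=0$ then has two pieces, $\langle x-u,x\rangle=0$ (since $x\in T_x\hat X$) and the condition that $x-u$ annihilates $T_{[x]}X$. Setting $y=x-u$ (so that $u=x-y$, matching the defining relation $u=x+y$ of $Z$ up to the harmless sign, which I will absorb into a coordinate change on $y$), the second condition says exactly $([x],[y])\in\Tbb^*_X\Pbb^{n-1}$, while the first says $\langle x,y\rangle=\langle x,x\rangle$; and collinearity of $[x],[y],[u]$ is automatic since $u=x-y$ lies in $\mathrm{span}(x,y)$. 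Conversely, a point $([x],[y],u)\in\cP\cE$ with $u$ general will have $[x]\ne[y]$ and $[x]\notin Q$, so one can rescale $x$ within its line to arrange $u=x-y$ exactly; this produces a critical point $x$ of~\eqref{eq:dist}. So the fiber $\cP\cE_u$ maps onto the critical locus, and the content of the lemma is that this correspondence is generically one-to-one.

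The bijectivity for general $u$ is where the genericity hypothesis does its work, and it is essentially the statement that distinct critical points $x,x'$ give distinct points of $\cP\cE$, together with the fact that a general $u$ avoids the (proper) locus where $\cP\cE\to\Cbb^n$ has positive-dimensional fibers, multiple sheets colliding, or where critical points occur at singular points of the cone or lie on $Q$. The cleanest way to organize this is to invoke the generic smoothness / generic finiteness of the dominant map $\cP\cE\to\Cbb^n$ between irreducible $n$-dimensional varieties: there is a dense open $U\subseteq\Cbb^n$ over which the fiber is finite of constant cardinality equal to $\deg(\cP\cE\to\Cbb^n)$, and one then checks that for $u\in U$ every point of $\cP\cE_u$ indeed rescales uniquely to a genuine critical nonsingular point (the rescaling is unique once $[x]\notin Q$, which holds generically, and distinct fiber points over the same $u$ give distinct $x$ because $x$ is recovered from the data). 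This matches the construction in~\cite[\S5]{MR3451425}, so the argument is really a matter of transcribing that verification into the present notation.

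I expect the main obstacle to be the bookkeeping around degenerate fibers and the need to confine attention to $[x]\ne[y]$, $[x]\notin Q$: one must argue that for general $u$ no critical point of~\eqref{eq:dist} is lost (none occurs at a singular point of the cone, none at $x$ with $x-u$ isotropic, none "at infinity") and none is spuriously created (no point of $\cP\cE_u$ fails to come from an honest critical point). Since $X\not\subseteq Q$, the locus of bad $([x],[y])$ is a proper closed subset of $\Tbb^*_X\Pbb^{n-1}$, hence its image in $\Cbb^n$ under the (at most $n$-dimensional) correspondence is either all of $\Cbb^n$ — which one rules out by exhibiting a single $u$ with a good critical point, or more systematically by a dimension count — or a proper closed subset, which a general $u$ avoids. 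Everything else is formal. I would therefore present the proof as: (1) translate the critical-point equations into membership in $Z\cap(\Tbb^*_X\Pbb^{n-1}\times\Cbb^n)$; (2) note $\cP\cE$ is the relevant component and the projection to $\Cbb^n$ is dominant; (3) invoke generic finiteness and a genericity argument to match fiber cardinality with $\Edd(X)$.
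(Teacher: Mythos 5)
Your proposal is correct and follows essentially the same route as the paper, whose entire proof is a citation to the argument of \cite[Proposition~5.4]{MR3451425}: you have simply transcribed that verification (critical points of~\eqref{eq:dist} at smooth points of the cone correspond, for general $u$, bijectively to points of the fiber of $\cP\cE\to\Cbb^n$, with genericity disposing of the diagonal, the boundary of the conormal variety, and degenerate rescalings). One small correction: uniqueness of the rescaling putting $u$ in the form $\lambda x+\mu y$ follows from $[x]\ne[y]$ (linear independence), not from $[x]\notin Q$; the isotropic quadric instead enters in ruling out entire rays of critical points, which genericity of $u$ handles.
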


\begin{proof}
This is implied by the argument in the proof of \cite[Proposition~5.4]{MR3451425}.
\end{proof}

Lemma~\ref{lem:EDcor} suggests that one should be able to express $\Edd(X)$ 
in terms of an intersection with the fiber $Z_u$ of $Z$ over a general point $u\in \Cbb^n$.
We may view $Z_u$ as a subvariety of $\Pbb^{n-1}\times \cPbb^{n-1}$:
\[
Z_u = \{ ([x],[y])\in \Pbb^{n-1}\times \cPbb^{n-1}\,|\, \dim \langle x,y,u\rangle \le  2\}\quad,
\]
where $u$ is now fixed (and general). It is easy to verify that $Z_u$ is an $n$-dimensional
irreducible variety, and that
\begin{equation}\label{eq:Zuclass}
[Z_u]=h^{n-2}+h^{n-3} \ch + \cdots + \ch^{n-2}
\end{equation}
in the Chow group $A_*(\Pbb^{n-1}\times \cPbb^{n-1})$. Here, $h$, resp.,~$\ch$ denote 
the pull-back of the hyperplane class from $\Pbb^{n-1}$, resp.~, $\cPbb^{n-1}$.
(For example, one may verify~\eqref{eq:Zuclass} by intersecting $Z_u$ with suitably
chosen $\Pbb^i\times \cPbb^j$ within $\Pbb^{n-1}\times \cPbb^{n-1}$.)
This implies the following statement, cf.~\cite[Proposition~5.4]{MR3451425}.

\begin{lemma}\label{eq:Zuint}
For all $u\in \Cbb^{n-1}$ and all subvarieties $X\subseteq \Pbb^{n-1}$,
\[
Z_u\cdot \Tbb^*_X\Pbb^{n-1} = \sum_{i=0}^{n-2} \delta_i(X)\quad,
\]
where the numbers $\delta_i(X)$ are the {\em polar degrees\/} of $X$.
\end{lemma}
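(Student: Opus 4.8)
The plan is to combine Lemma~\ref{lem:EDcor}, the description of the fiber $Z_u$, and the class computation~\eqref{eq:Zuclass}. The strategy is to show that the degree of the projection $\cP\cE \to \Cbb^n$ is computed, for general $u$, by the intersection number of $Z_u$ with the conormal space $\Tbb^*_X\Pbb^{n-1}$ inside $\Pbb^{n-1}\times\cPbb^{n-1}$, and then to read off this intersection number from~\eqref{eq:Zuclass} together with the definition of the polar degrees.

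First I would recall that the polar degrees $\delta_i(X)$ are by definition the coefficients appearing in the class of the conormal variety: writing
\[
[\Tbb^*_X\Pbb^{n-1}] = \sum_{i=0}^{n-2} \delta_i(X)\, h^{i+1}\ch^{n-1-i}
\]
in $A_*(\Pbb^{n-1}\times\cPbb^{n-1})$ (the indexing/normalization matching~\cite{MR3451425}). Then, since the Chow ring of $\Pbb^{n-1}\times\cPbb^{n-1}$ is $\Cbb[h,\ch]/(h^n,\ch^n)$ and the two classes~\eqref{eq:Zuclass} and the conormal class above have complementary dimensions, the intersection product $Z_u\cdot\Tbb^*_X\Pbb^{n-1}$ is obtained by multiplying the two expressions and extracting the coefficient of $h^{n-1}\ch^{n-1}$. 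Each term $h^{n-2-j}\ch^{j}$ of $[Z_u]$ pairs nontrivially with exactly one term $\delta_i(X)\,h^{i+1}\ch^{n-1-i}$ of the conormal class, namely the one with $i+1 = j+1$; every such pairing contributes $\delta_i(X)$, and summing over $i$ from $0$ to $n-2$ yields $\sum_i \delta_i(X)$. This is a routine bookkeeping step once the normalization is pinned down.

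The substantive point — and the one I would treat most carefully — is the transversality/genericity argument showing that for general $u$ the scheme-theoretic intersection $Z_u\cap\Tbb^*_X\Pbb^{n-1}$ is proper (of the expected dimension zero) and reduced, so that the intersection number genuinely counts points and matches $\Edd(X)$. Here one uses that the fibers of $\cP\cE\to\Cbb^n$ over a general $u$ are exactly the sets of pairs $([x],[y])\in\Tbb^*_X\Pbb^{n-1}$ with $[x],[y],[u]$ collinear, i.e.\ precisely $Z_u\cap\Tbb^*_X\Pbb^{n-1}$ away from the degenerate locus $[x]=[y]$; one must check that the contribution of the diagonal locus $[x]=[y]$ (which lies in $Q$, by the blanket assumption $X\not\subseteq Q$) does not affect the count, and that a generic translate $u$ avoids the finitely many bad positions. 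A clean way to organize this is to invoke the irreducibility and dimension count for $\cP\cE$ recorded just before Lemma~\ref{lem:EDcor}, together with generic smoothness / Kleiman transversality applied to the projection $\cP\cE\to\Cbb^n$, reducing the claim to Lemma~\ref{lem:EDcor}.

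The main obstacle I anticipate is precisely this last genericity step: one must ensure that $Z_u$ meets $\Tbb^*_X\Pbb^{n-1}$ only at honest ED-critical points and with multiplicity one, and in particular that no excess intersection occurs along $\{[x]=[y]\}$ or along the (possibly singular) boundary of the conormal variety. Everything else — the class computation~\eqref{eq:Zuclass}, the expansion of the conormal class in terms of polar degrees, and the final coefficient extraction — is formal. For the statement as phrased (an equality of intersection numbers, valid for \emph{all} $u$), one may alternatively bypass the genericity discussion entirely: since $A_*(\Pbb^{n-1}\times\cPbb^{n-1})$ is torsion-free and the intersection product depends only on the classes~\eqref{eq:Zuclass} and $[\Tbb^*_X\Pbb^{n-1}]$, the number $Z_u\cdot\Tbb^*_X\Pbb^{n-1}$ is independent of $u$ and equals $\sum_{i=0}^{n-2}\delta_i(X)$ on the nose, which is the asserted identity; the connection to $\Edd(X)$ is then a separate matter handled through Lemma~\ref{lem:EDcor}.
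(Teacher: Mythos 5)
Your closing paragraph is exactly the paper's proof: the lemma is a purely formal identity of intersection numbers, obtained by multiplying \eqref{eq:Zuclass} against $[\Tbb^*_X\Pbb^{n-1}]$ in the Chow ring of $\Pbb^{n-1}\times\cPbb^{n-1}$ and extracting the coefficient of the point class, since the polar degrees are by definition the coefficients of $h^{n-1-i}\ch^{i+1}$ in the conormal class; this holds for all $u$, and your bookkeeping is correct (modulo the harmless slip of writing $\Cbb[h,\ch]/(h^n,\ch^n)$ for the integral Chow ring). The lengthy middle discussion of properness, reducedness, and multiplicity one is not part of this lemma and should be dropped: the statement asserts nothing about $\Edd(X)$, and the identification $\Edd(X)=Z_u\cdot \Tbb^*_X\Pbb^{n-1}$ is in general \emph{false} --- the intersection always contains the locus $\Delta\cap\Tbb^*_X\Pbb^{n-1}$ (nonempty whenever $X$ is tangent to $Q$), and quantifying that excess contribution is precisely the content of Corollary~\ref{cor:intZu}, \eqref{eq:gamma}, and the remainder of the paper. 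So your proof of the lemma as stated is correct and coincides with the paper's; the ``substantive point'' you single out is aimed at a stronger claim that requires genericity hypotheses on $X$ and is not what the lemma asks.
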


Indeed, the polar degrees are (by definition) the coefficients of the monomials
$h^{n-1-i} \ch^{i+1}$ in the class $[\Tbb^*_X\Pbb^{n-1}]$. 

In view of Lemma~\ref{eq:Zuint}, we define
\[
\gEdd(X):=\sum_{i=0}^{n-2} \delta_i(X)\quad,
\]
the `generic Euclidean distance degree' of $X$. 
By Lemma~\ref{lem:EDcor}, $\Edd(X)$ is the contribution of the projective ED
correspondence to the intersection number $\gEdd(X)=Z_u\cdot \Tbb^*_X\Pbb^{n-1}$
calculated in Lemma~\ref{eq:Zuint}. It is a consequence of~\cite[Proposition~5.4]{MR3451425} 
that if $X$ is in sufficiently general position, then this contribution in fact equals the whole 
intersection number, i.e., $\Edd(X) =\gEdd(X)$.
Our goal is to determine a precise `correction term' evaluating the discrepancy between
$\Edd(X)$ and $\gEdd(X)$ without any {\em a priori\/} hypothesis on $X$.
In \S\ref{s:If} we will formalize this goal and deduce a general 
formula for $\Edd(X)$
for an arbitrary variety $X$. In~\S\S\ref{s:If}--\ref{s:Euler} we will use this
result to obtain more explicit formulas for $\Edd(X)$ under the assumption that $X$
is nonsingular. 

\section{The generic Euclidean distance degree, revisited}\label{s:gEDdr}
This section is not used in the sections that follow, but should help motivating
formula~\eqref{eq:Eulerin}, which will be proven in~\S\ref{s:Euler}. We also
propose a possible conjectural generalization of this formula to arbitrary
projective varieties.

We have defined the `generic' Euclidean distance degree of a subvariety $X\subseteq
\Pbb^{n-1}$ as the sum of its polar degrees. In~\cite[Theorem~5.8]{MR3451425}
it is shown that if $X$ is nonsingular, then
\begin{equation}\label{eq:gEdd}
\gEdd(X)=\sum_{j=0}^{\dim X} (-1)^{\dim X+j} c(X)_j (2^{j+1}-1)\quad;
\end{equation}
this number may be interpreted as the Euclidean distance degree of a general translation 
of~$X$, which will meet $Q$ transversally by Bertini's theorem.
Here $c(X)_j$ is the degree of the component of dimension~$j$ 
of the Chern class $c(TX)\cap [X]$ of $X$. Formula~\eqref{eq:gEdd} holds for 
arbitrarily singular varieties $X$, if one replaces $c(X)$ with the {\em Chern-Mather\/} 
class $\cma(X)$ of $X$ (\cite[Proposition 2.9]{produa}).

Assume first that $X$ is nonsingular.
As a preliminary observation, the reader is invited to perform the following calculus
exercise:
\[
\text{\em For $0\le j\le N$, the coefficient of $t^N$ in the expansion of 
$\frac{t^{N-j}}{(1+t)(1+2t)}$ is 
$(-1)^j (2^{j+1}-1)$.}
\] 
With this understood, we have the following computation:
\begin{align*}
\gEdd(X) &= (-1)^{\dim X}\sum_{j=0}^{\dim X} c(X)_j (-1)^j (2^{j+1}-1) \\
&= (-1)^{\dim X}\sum_{j=0}^{\dim X} c(X)_j \int \frac{h^{\dim X-j}}{(1+h)(1+2h)} \cdot
h^{\codim X}\cap [\Pbb^{n-1}] \\
&= (-1)^{\dim X}\int \frac{1}{(1+h)(1+2h)}\cdot c(TX)\cap [X] \\
&= (-1)^{\dim X}\int \left(1-\frac{h}{1+h} -\frac{2h}{1+2h} + \frac{h\cdot 2h}{(1+h)(1+2h)}\right)
\cdot c(TX)\cap [X]\quad.
\end{align*}
Assuming further that $X$ is transversal to $Q$ and that $H$ is a general hyperplane,
the last expression may be rewritten as
\begin{multline*}
(-1)^{\dim X}\left(\int c(TX)\cap [X] - \int c(T(X\cap H))\cap [X\cap H] \right.\\
\left.- \int c(T(X\cap Q))\cap [X\cap Q] + \int c(T(X\cap Q\cap H))\cap [X\cap Q\cap H] 
\right)
\end{multline*}
(by transversality, all of the loci appearing in this expression are nonsingular). 
The degree of the zero-dimensional component of the Chern 
class of a compact complex nonsingular variety is its topological Euler characteristic, so this 
computation shows that
\begin{equation}\label{eq:gEddEu}
\Edd(X) = (-1)^{\dim X}\big(\chi(X)-\chi(X\cap H)-\chi(X\cap Q) + \chi(X\cap Q\cap H)\big)\quad,
\end{equation}
{\em if $X$ is nonsingular and meets $Q$ transversally\/}
(and where $H$ is a general hyperplane).

Theorem~\ref{thm:Euler} will amount to the assertion that~\eqref{eq:gEddEu} holds 
as soon as $X$ is nonsingular, without any hypothesis on the intersection of $Q$ 
and $X$. By the good inclusion-exclusion properties of the Euler characteristic,
\eqref{eq:gEddEu} is equivalent to \eqref{eq:Eulerin}.

While the computation deriving~\eqref{eq:gEddEu} 
from~\cite[Theorem~5.8]{MR3451425} is trivial under the transversality hypothesis, 
we do not know of any simple way 
to obtain this formula in the general case. The next several sections 
(\S\ref{s:If}--\ref{s:Euler}) will lead to a proof of~\eqref{eq:gEddEu} for arbitrary 
nonsingular varieties. 

The above computation can be extended to singular projective subvarieties. 
Just as the topological Euler characteristic of a nonsingular variety is the degree of its 
top Chern class, we can define an `Euler-Mather characteristic' of a possibly singular 
variety $V$ by setting
\[
\chima(V):= \int \cma(V)\quad,
\]
the degree of the Chern-Mather class of $V$.
This number is a linear combination of Euler characteristics of strata of $V$, with coefficients
determined by the {\em local Euler obstruction $\Eu$,\/} a well-studied numerical invariant of
singularities.

\begin{prop}\label{prop:gEddsin}
For {\em any\/} subvariety $X\subseteq\Pbb^{n-1}$ intersecting $Q$ transversally,
\begin{equation}\label{eq:Euma}
\Edd(X) = (-1)^{\dim X}\big(\chima(X) - \chima(X\cap Q) -\chima(X\cap H) 
+ \chima(X\cap Q\cap H)\big)\quad,
\end{equation}
where $H$ is a general hyperplane.
\end{prop}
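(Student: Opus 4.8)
The plan is to follow the same computation that produced~\eqref{eq:gEddEu} in the nonsingular case, but with the Chern class replaced throughout by the Chern-Mather class, and then to invoke the known singular version of~\cite[Theorem~5.8]{MR3451425} recorded in~\cite[Proposition~2.9]{produa}. Concretely, that result states that $\gEdd(X)=\sum_{j=0}^{\dim X}(-1)^{\dim X+j}\cma(X)_j\,(2^{j+1}-1)$, where $\cma(X)_j$ is the degree of the dimension-$j$ component of the Chern-Mather class. Using the same calculus identity (the coefficient of $t^N$ in $t^{N-j}/((1+t)(1+2t))$ is $(-1)^j(2^{j+1}-1)$), one rewrites this as
\[
\gEdd(X) = (-1)^{\dim X}\int \frac{1}{(1+h)(1+2h)}\cdot \cma(X)
= (-1)^{\dim X}\int\left(1-\frac{h}{1+h}-\frac{2h}{1+2h}+\frac{2h^2}{(1+h)(1+2h)}\right)\cap \cma(X).
\]

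Next I would identify each of the four terms with a Chern-Mather degree of an intersection with $H$, $Q$, or $Q\cap H$. The key input here is a Bertini/transversality statement for Chern-Mather classes: if $H$ is a general hyperplane then $\cma(X\cap H) = h\cdot\cma(X)\big/(1+h)$ (capping the Chern-Mather class against a general hyperplane section behaves like the smooth case, because a general hyperplane is transverse to every stratum of any Whitney stratification of $X$). Similarly, the transversality hypothesis on $Q$ gives $\cma(X\cap Q) = 2h\cdot\cma(X)\big/(1+2h)$ — here $Q$ has degree $2$, so the relevant class is $2h$, and transversality of $Q$ with (a refinement of) the strata of $X$ is exactly what is needed. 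Iterating, $\cma(X\cap Q\cap H) = 2h^2\cdot\cma(X)\big/((1+h)(1+2h))$. Substituting these four identities into the displayed expression and taking degrees yields precisely
\[
\Edd(X) = (-1)^{\dim X}\big(\chima(X)-\chima(X\cap H)-\chima(X\cap Q)+\chima(X\cap Q\cap H)\big),
\]
using $\Edd(X)=\gEdd(X)$ under the transversality hypothesis (this is the content of~\cite[Proposition~5.4]{MR3451425}, or see the discussion following Lemma~\ref{eq:Zuint}).

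The main obstacle is justifying the Chern-Mather pullback/Bertini identities $\cma(X\cap H)=h\cdot\cma(X)/(1+h)$ and its analogue for $Q$. Unlike the topological Euler characteristic, the Chern-Mather class does not obey a naive inclusion-exclusion, so one cannot simply quote the smooth computation; what rescues the argument is that these identities hold for a \emph{general} hyperplane section (and, under the transversality hypothesis, for the fixed quadric $Q$), because general sections are transverse to a Whitney stratification and the local Euler obstruction is preserved under such transverse sections. I would either cite the relevant statement (e.g., the behavior of Chern-Mather classes under generic hyperplane sections, which follows from the functoriality of $\csm$ classes together with the relation between $\cma$ and $\csm$ via the local Euler obstruction, or from Kleiman transversality applied to the conormal variety) or give a short argument via the projectivized conormal variety: $\Tbb^*_{X\cap H}\Pbb^{n-1}$ is obtained from $\Tbb^*_X\Pbb^{n-1}$ by an explicit incidence construction whose class computation gives exactly the factor $h/(1+h)$, and likewise $Q$ transversal means $\Tbb^*_{X\cap Q}\Pbb^{n-1}$ is cut out with the expected multiplicity, producing $2h/(1+2h)$. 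Once these two class identities are in hand, the proposition is a formal substitution.
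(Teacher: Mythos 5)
Your proposal is correct and follows essentially the same route as the paper: rerun the computation that gave~\eqref{eq:gEddEu} with $\cma$ in place of $c(TX)$ via~\cite[Proposition~2.9]{produa}, the only extra input being the transversal-section formula $\cma(W\cap V)=\frac{W}{1+W}\cap\cma(V)$ for a nonsingular hypersurface $W$ meeting $V$ transversally (applied with $W=H$ and $W=Q$), which the paper justifies by citing Sch\"urmann's transversality theorem rather than sketching a stratification/conormal argument as you do.
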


\begin{proof}
Argue precisely as in the discussion leading to~\eqref{eq:gEddEu}, 
using~\cite[Proposition 2.9]{produa} in place of~\cite[Theorem~5.8]{MR3451425}. 
The only additional ingredient needed for the computation is the fact that if $W$ is a 
nonsingular hypersurface intersecting a variety $V$ transversally, then
\[
\cma(W\cap V) = \frac{W}{1+W}\cap \cma(V)\quad.
\]
For a much stronger result, and a discussion of the precise meaning of `transversality',
we address the reader to~\cite{schurmann:transversality}, particularly Theorem~1.2.
\end{proof}

Of course \eqref{eq:Euma} specializes to~\eqref{eq:gEddEu} when $X$ is nonsingular,
under the transversality hypothesis; but it does not do so in general, because $Q\cap X$  
may be singular even if $X$ is nonsingular, and $\chima(Q\cap X)$ does not necessarily 
agree with $\chi(Q\cap X)$ in that case. 
Therefore, the transversality hypothesis in Proposition~\ref{prop:gEddsin}
is necessary. The Euler-Mather characteristic of the complement,
\[
(-1)^{\dim X} \chima(X\smallsetminus (Q\cup H)) = \int c_*(\Eu_{X\smallsetminus
(Q\cup H)})
\]
(where $c_*$ denotes MacPherson's natural transformation)
may be the most natural candidate as an expression for $\Edd(X)$ for arbitrary
subvarieties $X\subseteq \Pbb^{n-1}$, without smoothness or transversality
hypotheses.

%%%

\section{Intersection formula}\label{s:If}

In~\S\ref{s:EDD} we have defined the projective ED correspondence to be one component
of the intersection $(\Tbb^*_X\Pbb^{n-1}\times \Cbb^n)\cap Z$. We next determine
the union of the {\em other\/} irreducible components.
We denote by $\Delta$ the diagonal in $\Pbb^{n-1}\times \cPbb^{n-1}$.
In this section $X\subseteq \Pbb^{n-1}$ is a subvariety (not necessarily smooth),
and $X\not\subseteq Q$ (cf.~Remark~\ref{rem:XinQ}).

\begin{lemma}\label{lem:Zdec}
We have
\[
(\Tbb^*_X\Pbb^{n-1}\times \Cbb^n)\cap Z = \cP\cE\cup Z^\Delta\quad,
\] 
where the support of $Z^\Delta$ equals the support of
$(\Delta\cap \Tbb^*_X\Pbb^{n-1})\times \Cbb^n$.
\end{lemma}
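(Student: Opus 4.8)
The plan is to analyze the intersection $(\Tbb^*_X\Pbb^{n-1}\times \Cbb^n)\cap Z$ fiberwise over $\Tbb^*_X\Pbb^{n-1}$, exploiting the description of $Z$ as the collinearity locus. Recall that a point of the intersection is a triple $([x],[y],u)$ with $([x],[y])\in \Tbb^*_X\Pbb^{n-1}$ and $\dim\langle x,y,u\rangle\le 2$. The key dichotomy is whether $[x]=[y]$ in $\Pbb^{n-1}\times\cPbb^{n-1}$ (i.e. whether the point lies on the diagonal $\Delta$) or not. First I would observe that over the open locus where $[x]\ne[y]$, the fiber of $Z$ consists exactly of the $2$-plane $\langle x,y\rangle\subseteq\Cbb^n$, so the intersection restricted there is irreducible of dimension $n$ and its closure is precisely $\cP\cE$; this is already essentially in the construction of $\cP\cE$ recalled in~\S\ref{s:EDD}. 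Hence $\cP\cE$ is one component, and every other component must be contained in the closed locus $\Delta\cap\Tbb^*_X\Pbb^{n-1}$, times $\Cbb^n$.

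The second step is the reverse containment: over a point $([x],[x])\in \Delta\cap\Tbb^*_X\Pbb^{n-1}$, the condition $\dim\langle x,x,u\rangle\le 2$ imposes nothing on $u$, so the entire fiber $\{([x],[x])\}\times\Cbb^n$ lies in the intersection. Therefore $(\Delta\cap\Tbb^*_X\Pbb^{n-1})\times\Cbb^n$ is contained in $(\Tbb^*_X\Pbb^{n-1}\times\Cbb^n)\cap Z$ as a set. Combined with Step~1, this gives the set-theoretic equality: the intersection is the union of $\cP\cE$ and (the support of) $(\Delta\cap\Tbb^*_X\Pbb^{n-1})\times\Cbb^n$. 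One must check the dimension bookkeeping is consistent — $\Delta\cap\Tbb^*_X\Pbb^{n-1}$ has dimension at most $n-2$ (it maps finitely, generically, onto a subvariety of $X\cap Q$ via the incidence conditions, since $([x],[x])\in I$ forces $[x]\in$ the hyperplane dual to $x$, i.e. $x\cdot x=0$), so this component has dimension at most $n-1<n=\dim\cP\cE$, which is why $\cP\cE$ is singled out as the dominating component rather than being swallowed.

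The third step is to promote this to a scheme-theoretic statement defining $Z^\Delta$: one simply \emph{defines} $Z^\Delta$ to be the union of the primary components of the scheme $(\Tbb^*_X\Pbb^{n-1}\times\Cbb^n)\cap Z$ other than $\cP\cE$ (more precisely, the residual scheme to $\cP\cE$, in the sense of Fulton's residual intersection), so that by Step~1 and Step~2 its support is exactly that of $(\Delta\cap\Tbb^*_X\Pbb^{n-1})\times\Cbb^n$. This is the content of the lemma as stated — it asserts the support, not the scheme structure, so no further work on embedded components or multiplicities is needed here; those refinements are deferred to the later sections where $Z^\Delta_u$ is analyzed.

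I expect the main obstacle to be the careful verification in Step~1 that no extra components sneak in over the boundary of the locus $[x]\ne[y]$ — that is, that the closure of the $[x]\ne[y]$ part is all of $\cP\cE$ and does not leave behind lower-dimensional pieces supported on $\Delta$ that would need to be separated out. This is handled by the irreducibility of $\cP\cE$ (already established: it is the unique component dominating the irreducible $\Tbb^*_X\Pbb^{n-1}$, of dimension $n$) together with the observation that $Z\to\cPbb^{n-1}\times\Pbb^{n-1}$ restricted away from $\Delta$ is a $\Pbb^1$-bundle (projectivizing the $2$-plane $\langle x,y\rangle$), so its preimage over $\Tbb^*_X\Pbb^{n-1}\smallsetminus\Delta$ is irreducible, and its closure is $\cP\cE$ by definition. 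The blanket hypothesis $X\not\subseteq Q$ is what guarantees $\Tbb^*_X\Pbb^{n-1}\smallsetminus\Delta$ is nonempty, so this argument is not vacuous.
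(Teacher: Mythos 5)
Your argument is essentially the paper's: you single out $\cP\cE$ as the unique component dominating $\Tbb^*_X\Pbb^{n-1}$ via the fiber description over points off the diagonal (using $X\not\subseteq Q$ to know such points exist), and you get the reverse inclusion by noting that over a diagonal point $([x],[x])\in\Tbb^*_X\Pbb^{n-1}$ the collinearity condition on $u$ is vacuous, so the whole fiber $\Cbb^n$ lies in the intersection. The paper proves this last containment ($\Delta\times\Cbb^n\subseteq Z$) by a limiting argument and then remarks that the verification via the $3\times 3$ minors (equivalently, the collinearity description you invoke) works just as well, so this is not a genuinely different route; and since the lemma is purely about supports, your Step 3 is all that is needed.

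One aside in your writeup is wrong, although it does no damage to the proof: the ``dimension bookkeeping'' claim that the diagonal piece has dimension at most $n-1<n=\dim\cP\cE$. The piece in question is $(\Delta\cap\Tbb^*_X\Pbb^{n-1})\times\Cbb^n$, whose dimension is $\dim(\Delta\cap\Tbb^*_X\Pbb^{n-1})+n$; since $\Delta\cap\Tbb^*_X\Pbb^{n-1}$ can be positive-dimensional (it is, e.g., for the Segre varieties of \S\ref{ss:Segre}), this can be as large as $2n-3$, which exceeds $n$ once $n\ge 4$. What prevents $\cP\cE$ from being ``swallowed'' is not a dimension count but domination: $\cP\cE$ dominates $\Tbb^*_X\Pbb^{n-1}$, while everything over the diagonal lies over the proper closed subset $\Delta\cap\Tbb^*_X\Pbb^{n-1}$ (proper precisely because $X\not\subseteq Q$). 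With that parenthetical claim deleted or corrected, your argument coincides with the paper's.
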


\begin{proof}
Consider the projection $(\Tbb^*_X\Pbb^{n-1}\times \Cbb^n)\cap Z \to \Pbb^{n-1}\times 
\cPbb^{n-1}$. We have already observed in~\S\ref{s:EDD} that the fiber over 
$([x],[y])\in \Tbb^*_X\Pbb^{n-1}$, $([x],[y])\not\in \Delta$, consists of the span $\langle x,y\rangle$
in $\Cbb^n$; it follows that $\cP\cE$ is the only component of the intersection dominating
$\Tbb^*_X\Pbb^{n-1}$. (Again note that since $X\not\subseteq Q$, there are points
$([x],[y])\in \Tbb^*_X$, $([x],[y])\not\in \Delta$.)

We claim that if $([x],[x])\in \Tbb^*_X\Pbb^{n-1}$, then the fiber of
$(\Tbb^*_X\Pbb^{n-1}\times \Cbb^n)\cap Z$ over $([x],[x])$ consists of the whole
space $\Cbb^{n-1}$; the statement follows immediately from this assertion.

Trivially, $([x],[x],u)\in \Tbb^*_X\Pbb^{n-1}\times \Cbb^n$ for all $u$, so we simply need to
verify that $([x],[x],u)\in Z$ for all $u\in \Cbb^n$. But this is clear, since there are points
$([x'],[y'],u)$ with $u\in \langle x', y'\rangle$ and $([x'],[y'])$ arbitrarily close to $([x],[x])$.
\end{proof}

The fact that $Z$ contains $\Delta\times \Cbb^n$ (used in the proof) may also be verified
by observing that equations for $Z$ in $\Pbb^{n-1}\times \cPbb^{n-1}\times \Cbb^n$
are given by the $3\times 3$ minors of the matrix
\begin{equation}\label{eq:Zeq}
\begin{pmatrix}
x_1 & x_2 & \cdots & x_n \\
y_1 & y_2 & \cdots & y_n \\
u_1 & u_2 & \cdots & u_n
\end{pmatrix}
\end{equation}
associated with a point $([x],[y],u)\in \Pbb^{n-1}\times \cPbb^{n-1}\times \Cbb^n$; the
diagonal $\Delta\times \Cbb^n$ obviously satisfies these equations.

Now we fix a general $u\in \Cbb^n$. By Lemma~\ref{lem:EDcor}, the fiber $\cP\cE_u$
consists of $\Edd(X)$ simple points, which will be disjoint from the diagonal for 
general $u$. On the other hand, for all~$u$, the fiber $Z_u$ (when 
viewed as a subvariety of $\Pbb^{n-1}\times \cPbb^{n-1}$) contains $\Delta$. 
We deduce the following consequence of Lemma~\ref{lem:Zdec}.

\begin{corol}\label{cor:intZu}
For a general $u\in \Cbb^n$,
\[
Z_u\cap \Tbb^*_X\Pbb^{n-1} = \text{\{$\Edd(X)$ simple points\}}\sqcup Z_u^\Delta
\]
(as schemes), where the support of $Z_u^\Delta$ agrees with the support of 
$\Delta\cap \Tbb^*_X\Pbb^{n-1}$.
\end{corol}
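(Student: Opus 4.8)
The plan is to deduce the statement from Lemma~\ref{lem:Zdec} by restricting its scheme-theoretic identity to the fiber over a general $u\in\Cbb^n$. Write $W:=(\Tbb^*_X\Pbb^{n-1}\times\Cbb^n)\cap Z$, regarded as a subscheme of $\Pbb^{n-1}\times\cPbb^{n-1}\times\Cbb^n$, and let $\pi\colon W\to\Cbb^n$ be the projection onto the last factor. Since forming a fiber commutes with intersection, $\pi^{-1}(u)=Z_u\cap\Tbb^*_X\Pbb^{n-1}$, where now $Z_u$ is viewed inside $\Pbb^{n-1}\times\cPbb^{n-1}$; thus the corollary is precisely a description of the general fiber of $\pi$. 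By Lemma~\ref{lem:Zdec}, $W=\cP\cE\cup Z^\Delta$ as schemes, with $\operatorname{Supp}Z^\Delta=(\Delta\cap\Tbb^*_X\Pbb^{n-1})\times\Cbb^n$. So it is enough to analyze $\pi$ on each of $\cP\cE$ and $Z^\Delta$ separately and to check that the two resulting contributions to $\pi^{-1}(u)$ are disjoint for general $u$.

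First I would treat $\cP\cE$. It is irreducible of dimension $n$ and maps dominantly to $\Cbb^n$, hence generically finite; by Lemma~\ref{lem:EDcor} its degree is $\Edd(X)$, and since we work over a field of characteristic zero a generic-smoothness argument shows that $\cP\cE_u$ consists of exactly $\Edd(X)$ reduced points for all $u$ in a dense open $U_1\subseteq\Cbb^n$ (this is the content of the `$\Edd(X)$ simple points' already cited from Lemma~\ref{lem:EDcor}). Moreover $\cP\cE$ dominates $\Tbb^*_X\Pbb^{n-1}$, which is not contained in the diagonal $\Delta$ (as noted in~\S\ref{s:EDD}, using $X\not\subseteq Q$), so $\cP\cE\cap(\Delta\times\Cbb^n)$ is a proper closed subvariety of $\cP\cE$, of dimension at most $n-1$; its image in $\Cbb^n$ is therefore contained in a proper closed subset, and for $u$ in a dense open $U_2$ no point of $\cP\cE_u$ lies over $\Delta$. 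On the $Z^\Delta$ side, for every $u$ the fiber $Z_u^\Delta$ has support equal to the fiber of $(\Delta\cap\Tbb^*_X\Pbb^{n-1})\times\Cbb^n$ over $u$, namely $\Delta\cap\Tbb^*_X\Pbb^{n-1}$.

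Now fix $u\in U_1\cap U_2$ and assemble. At a point $p\in\cP\cE_u$ the point $(p,u)$ lies off $\operatorname{Supp}Z^\Delta$ (since $p\notin\Delta$), so locally there $I_W=I_{\cP\cE}$, and hence $\pi^{-1}(u)$ agrees with $\cP\cE_u$ near $p$ --- one of the $\Edd(X)$ reduced points. At a point $q\in\Delta\cap\Tbb^*_X\Pbb^{n-1}$ the choice $u\in U_2$ gives $(q,u)\notin\cP\cE$, so locally there $I_W=I_{Z^\Delta}$ and $\pi^{-1}(u)$ agrees with $Z_u^\Delta$ near $q$. Since $\operatorname{Supp}\pi^{-1}(u)$ is the disjoint union of these two closed sets, $\pi^{-1}(u)=Z_u\cap\Tbb^*_X\Pbb^{n-1}$ decomposes as the coproduct of the corresponding open-and-closed subschemes, i.e.\ as $\{\Edd(X)\text{ simple points}\}\sqcup Z_u^\Delta$ with $\operatorname{Supp}Z_u^\Delta=\Delta\cap\Tbb^*_X\Pbb^{n-1}$, as claimed. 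There is no serious obstacle here: the whole argument is formal bookkeeping with fibers and supports starting from Lemma~\ref{lem:Zdec}, and the only point needing a little care is that the $\Edd(X)$ points be simultaneously \emph{reduced} and \emph{disjoint from $\Delta$} for general $u$ --- which is exactly why we intersect the two dense opens $U_1$ and $U_2$ produced in the second step.
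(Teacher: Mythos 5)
Your argument is correct and is essentially the paper's own deduction: the paper likewise obtains the corollary by restricting the decomposition of Lemma~\ref{lem:Zdec} to the fiber over a general $u$, invoking Lemma~\ref{lem:EDcor} for the $\Edd(X)$ simple points and noting they miss the diagonal for general $u$, while the residual piece is supported on $\Delta\cap\Tbb^*_X\Pbb^{n-1}$. You merely make explicit the genericity bookkeeping (generic smoothness for reducedness, the dimension count forcing $\cP\cE_u$ off $\Delta$) that the paper leaves implicit.
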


Taking into account Lemma~\ref{eq:Zuint} we obtain that
\begin{equation}\label{eq:gamma}
\Edd(X) = \gEdd(X) - \gamma(X)\quad,
\end{equation}
where $\gamma(X)$ is the contribution of $Z_u^\Delta$ to the intersection product 
$Z_u\cdot \Tbb^*_X\Pbb^{n-1}$. This `correction term' $\gamma(X)$ does not 
depend on the chosen (general) $u$, and vanishes if $\Delta\cap \Tbb^*_X\Pbb^{n-1} 
=\emptyset$, since in this case $Z_u^\Delta=\emptyset$ by Corollary~\ref{cor:intZu}. 
This special case recovers the statement of~\cite[Proposition~5.4]{MR3451425}, 
and indeed~\eqref{eq:gamma} is essentially implicit in {\em loc.~cit.}. We are interested 
in computable expressions for the correction term $\gamma(X)$. 

We will first obtain the following master formula, through a direct application of 
Fulton-MacPherson intersection theory. The diagonal $\Delta$ is isomorphic to
$\Pbb^{n-1}$, and we denote by~$H$ its hyperplane class, as well as its restrictions.
(Thus, $H$ agrees with the restriction of both $h$ and $\ch$.)

\begin{theorem}\label{thm:masterf}
With notation as above,
\begin{equation}\label{eq:masterf}
\gamma(X) = \int (1+H)^{n-1}\cap s(Z_u^\Delta, \Tbb^*_X\Pbb^{n-1})
\end{equation}
for $u$ general in $\Cbb^n$.
\end{theorem}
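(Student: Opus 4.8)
The statement expresses the correction term $\gamma(X)$ as the contribution of the distinguished subscheme $Z_u^\Delta$ to the intersection product $Z_u\cdot\Tbb^*_X\Pbb^{n-1}$, and this is exactly the setting of Fulton's residual/refined intersection formula. The plan is to set up the intersection of $Z_u$ with $\Tbb^*_X\Pbb^{n-1}$ inside the ambient space $M:=\Pbb^{n-1}\times\cPbb^{n-1}$ as the intersection of a subvariety with a Cartier-type cycle, and then apply Fulton's formula (\cite[Proposition 9.1.2 / Proposition 6.1(a)]{MR3451425}-style, i.e. the distinguished-variety decomposition of intersection products, \cite[Ch.~6, \S9.2]{Fulton}) to isolate the piece supported on $Z_u^\Delta$.

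First I would recall, via \eqref{eq:Zuclass}, that $[Z_u]=h^{n-2}+h^{n-3}\ch+\cdots+\ch^{n-2}$ in $A_*(M)$, so $Z_u$ is rationally equivalent to a cycle that can be realized as a complete intersection of divisors in the appropriate sense; more precisely, the relevant fact is that $Z_u$ sits in a family whose general member meets $\Tbb^*_X\Pbb^{n-1}$ properly, and the excess/defect is concentrated on the distinguished subscheme. I would then invoke the basic intersection formula: if $Z_u\cap\Tbb^*_X\Pbb^{n-1}=A\sqcup Z_u^\Delta$ as in Corollary~\ref{cor:intZu}, with $A$ a reduced set of $\Edd(X)$ points contributing with multiplicity one, then
\[
\gEdd(X)=Z_u\cdot\Tbb^*_X\Pbb^{n-1}=\Edd(X)+\int\big(\text{contribution of }Z_u^\Delta\big),
\]
so that $\gamma(X)$ equals the latter contribution — this is just \eqref{eq:gamma} reread through Fulton's decomposition. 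The content of the theorem is the explicit evaluation of that contribution as $\int(1+H)^{n-1}\cap s(Z_u^\Delta,\Tbb^*_X\Pbb^{n-1})$.

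To get the factor $(1+H)^{n-1}$, the key computation is to identify the relevant ``excess bundle'' or, in Fulton's formulation, the Chern class by which one caps the Segre class $s(Z_u^\Delta,\Tbb^*_X\Pbb^{n-1})$. Here $Z_u$ is cut out (scheme-theoretically, or up to integral closure — but for the ambient intersection the scheme structure from the $3\times 3$ minors \eqref{eq:Zeq} suffices) by equations whose linear parts correspond to the normal data of $Z_u$ in $M$; restricted to the diagonal $\Delta\cong\Pbb^{n-1}$, this normal data has total Chern class $(1+H)^{n-1}$. Concretely, I would argue that along $\Delta$ the scheme $Z_u$ is (locally) defined by $n-1$ equations whose differentials span a rank-$(n-1)$ bundle with $c = (1+H)^{n-1}$: indeed the $2\times 2$ minors of the top two rows of \eqref{eq:Zeq} vanish on $\Delta$ to first order, and modulo the diagonal ideal the vanishing of the $3\times 3$ minors imposes $n-1$ independent linear conditions (the condition that $u$ lie in the tangent line), giving a sub-line-bundle-twisted normal bundle whose Chern class is $(1+H)^{n-1}$ by the Euler sequence on $\Pbb^{n-1}$. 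Then Fulton's formula \cite[Ch.~6]{Fulton} gives
\[
\text{(contribution of }Z_u^\Delta) = \int c\big(N_{Z_u}M|_{Z_u^\Delta}\big)\cap s(Z_u^\Delta,\Tbb^*_X\Pbb^{n-1}) = \int (1+H)^{n-1}\cap s(Z_u^\Delta,\Tbb^*_X\Pbb^{n-1}),
\]
using that the relevant Chern class is pulled back from $\Delta$ and restricts to $(1+H)^{n-1}$ on $Z_u^\Delta$ (whose support lies in $\Delta\cap\Tbb^*_X\Pbb^{n-1}$).

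\textbf{Main obstacle.} The delicate point is not the bookkeeping with Chern classes but justifying that Fulton's distinguished-variety formula applies in the form needed — i.e., that $Z_u$ can legitimately be treated as the zero scheme of a section (or regular-embedding-type cycle) so that the contribution of the non-proper component $Z_u^\Delta$ is governed purely by the Segre class $s(Z_u^\Delta,\Tbb^*_X\Pbb^{n-1})$ capped against a Chern class pulled back from the ambient normal data, with \emph{no} further correction from $\Tbb^*_X\Pbb^{n-1}$ possibly being singular. The clean way is: $Z_u$ is rationally equivalent in $M$ to a cycle $\sum$ of products of linear spaces by \eqref{eq:Zuclass}, but to localize the contribution one should instead realize $Z_u$ (or a cycle rationally equivalent to it that still contains $\Delta$) as a complete intersection of $n-1$ ample divisors on $M$, intersect successively, and track at each stage that the ``diagonal part'' of the intersection accumulates exactly the classes $H, H, \dots$. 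Verifying that the general $u$ makes all the intermediate intersections behave (only the genuinely excess part survives on $\Delta$, and the residual scheme away from $\Delta$ stays reduced of the expected dimension) is where the real work lies; this is precisely the ``direct application of Fulton–MacPherson intersection theory'' the introduction alludes to, and I expect the bulk of the proof to be a careful such induction, with the identification of the accumulated class as $(1+H)^{n-1}$ following from the Euler sequence computation sketched above.
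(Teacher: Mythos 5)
Your overall strategy coincides with the paper's: view $\gamma(X)$ as the contribution of the component supported on $Z_u^\Delta$ in Fulton's canonical decomposition of the intersection of $\Tbb^*_X\Pbb^{n-1}$ with $Z_u$, and evaluate it as $\int c(N)\cap s(Z_u^\Delta,\Tbb^*_X\Pbb^{n-1})$ with $c(N)$ restricting to $(1+H)^{n-1}$ along the diagonal. But the step that carries the actual content --- identifying the normal data of $Z_u$ along $\Delta$ --- is wrong as you state it. Since $Z_u$ has dimension $n$ inside the $(2n-2)$-dimensional ambient $\Pbb^{n-1}\times\cPbb^{n-1}$, its codimension is $n-2$, not $n-1$: near a diagonal point $([x],[x])\ne([u],[u])$ it is cut out by $n-2$ equations (the minors $y_i-x_iy_n$ of Lemma~\ref{lem:Zu}), and it is there a smooth local complete intersection of codimension $n-2$. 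Your ``$n-1$ equations whose differentials span a rank-$(n-1)$ bundle'' would force codimension $n-1$, and the Euler sequence on $\Pbb^{n-1}$ produces $(1+H)^n$, not $(1+H)^{n-1}$; so your justification of the class $(1+H)^{n-1}$ does not hold up, even though the class itself is correct. The paper's argument (Lemma~\ref{lem:N}) identifies $Z_u$, away from $([u],[u])$, with the preimage of the diagonal $\Delta'\subseteq\Pbb^{n-2}\times\Pbb^{n-2}$ under the linear projection from $[u]$ on each factor, so the rank-$(n-2)$ normal bundle restricts on the punctured diagonal to the pullback of $T\Delta'\cong T\Pbb^{n-2}$, whose total Chern class is $(1+H)^{n-1}$. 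A second gap: $Z_u$ fails to be a local complete intersection at $([u],[u])$, so Fulton's Proposition~6.1(a) cannot be applied to $Z_u\hookrightarrow\Pbb^{n-1}\times\cPbb^{n-1}$ globally; one must first excise that point, and this is precisely where the genericity of $u$ enters beyond Corollary~\ref{cor:intZu} (general $u$ gives $[u]\notin X$, hence $([u],[u])\notin\Tbb^*_X\Pbb^{n-1}$, so nothing is lost). Your sketch never confronts this.

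The difficulty you single out as the main obstacle is, on the other hand, not one. Fulton's decomposition (\cite[Proposition~6.1(a), Example~6.1.1]{85k:14004}) applies to the refined intersection of the regular embedding $Z_u\smallsetminus\{([u],[u])\}\hookrightarrow(\Pbb^{n-1}\times\cPbb^{n-1})\smallsetminus\{([u],[u])\}$ with an arbitrary, possibly singular, closed subscheme such as $\Tbb^*_X\Pbb^{n-1}$; no correction for singularities of the conormal variety is needed, since the Segre class $s(Z_u^\Delta,\Tbb^*_X\Pbb^{n-1})$ already encodes the local structure on that side. Consequently the inductive scheme you propose --- realizing $Z_u$ as a complete intersection of $n-1$ ample divisors and intersecting successively while tracking the diagonal part --- is both unnecessary and dimensionally off (moreover $[Z_u]=h^{n-2}+\cdots+\ch^{n-2}$ is not a product of divisor classes). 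Once Lemma~\ref{lem:Zu} and Lemma~\ref{lem:N} are in place, the proof is exactly the one-step application of Proposition~6.1(a) you wrote down, with rank $n-2$ in place of $n-1$.
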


Here, $\int$ denotes degree, and $s(-,-)$ is the {\em Segre class,\/} in the sense
of \cite[Chapter~4]{85k:14004}. Segre classes are effectively computable by available 
implementations of algorithms (see e.g.,~\cite{Harris201726}). However, the need
to obtain explicit equations for the scheme $Z_u^\Delta$, and conditions guaranteeing 
that a given $u$ is general enough, limit the direct applicability of Theorem~\ref{thm:masterf}. 
Our task in the next several sections of this paper will be to obtain from~\ref{eq:masterf} 
concrete computational tools, at the price of requiring $X$ to be of a more specific
type---we will assume in the following sections that $X\not\subseteq Q$ is {\em nonsingular,\/}
but otherwise arbitrary.

The proof of Theorem~\ref{thm:masterf} requires some additional information 
on $Z_u$, which we gather next.
As noted in~\S\ref{s:EDD}, $Z_u$ is an irreducible $n$-dimensional subvariety
of $\Pbb^{n-1}\times \cPbb^{n-1}$. Equations for~$Z_u$ in $\Pbb^{n-1}\times 
\cPbb^{n-1}$ are given by the $3\times 3$ minors of the matrix~\eqref{eq:Zeq},
where now $u=(u_1,\dots, u_n)$ is fixed. The diagonal $\Delta$ is a divisor in 
$Z_u$.

\begin{lemma}\label{lem:Zu}
The subvariety $Z_u$ of $\Pbb^{n-1}\times \cPbb^{n-1}$ is a smooth local complete 
intersection at all points $([x],[y])\ne ([u],[u])$. 
\end{lemma}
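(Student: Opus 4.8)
The plan is to work locally near a point $([x],[y]) \in Z_u$ with $([x],[y]) \neq ([u],[u])$, and show that there the $3 \times 3$ minors of the matrix \eqref{eq:Zeq} cut out a smooth scheme of the expected codimension $n-2$. First I would set up affine coordinates: on $\Pbb^{n-1}$ choose a chart where some $x_i \neq 0$ (say normalize $x_i = 1$), on $\cPbb^{n-1}$ a chart where some $y_j \neq 0$, so that near our point the $2n - 2$ coordinates are the remaining entries of $x$ and $y$, and $u$ is a fixed constant vector. The condition defining $Z_u$ is $\operatorname{rank} M \le 2$ where $M$ is the $3 \times n$ matrix with rows $x$, $y$, $u$.

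The key case distinction is on whether the two rows $x$, $u$ (or symmetrically $y$, $u$) are linearly dependent. If $[x] \neq [u]$, then the $2 \times n$ submatrix formed by rows $x$ and $u$ has rank $2$, so some $2 \times 2$ minor, say in columns $k, \ell$, is a nonvanishing function near our point; in that open set the rank-$\le 2$ locus is defined by the vanishing of the maximal minors of $M$ obtained by appending the $y$-row to the $2 \times 2$ block in columns $k,\ell$ together with one more column — concretely, $Z_u$ is locally cut out by the $n - 2$ equations expressing that each column of $M$ lies in the span of columns $k$ and $\ell$. I would write these $n-2$ equations explicitly (each is the determinant of a $3 \times 3$ block, linear in the corresponding new $x$- and $y$-coordinates after dividing by the unit minor) and check that their differentials are linearly independent at every point of $Z_u$ in this chart: each equation involves a coordinate (the relevant entry of $y$, or of $x$) not appearing in the others, with unit coefficient, so the Jacobian has a triangular structure of full rank $n-2$. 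This shows $Z_u$ is a smooth local complete intersection of codimension $n-2$ at all points with $[x] \neq [u]$. By the symmetry of the matrix \eqref{eq:Zeq} under swapping rows, the same argument applied to rows $y$ and $u$ handles all points with $[y] \neq [u]$. Since every point $([x],[y]) \neq ([u],[u])$ has either $[x] \neq [u]$ or $[y] \neq [u]$, these two cases cover everything and the lemma follows.

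The main obstacle is the bookkeeping in the case analysis: one must be careful that the "append a column" minors genuinely generate the ideal of the rank-$\le 2$ locus on the relevant open chart — this is the standard fact that when a $2 \times 2$ minor is invertible, the ideal of $(3 \times n)$-matrices of rank $\le 2$ is generated by the $3 \times 3$ minors using that fixed $2 \times 2$ block — and that one genuinely gets $n-2$ independent equations rather than accidentally fewer, which is where $[x] \neq [u]$ (equivalently rank of the $x,u$ block equal to $2$, not $1$) is used. One should also note at the outset that the dimension count is consistent: $\dim Z_u = n$ and $\dim(\Pbb^{n-1} \times \cPbb^{n-1}) = 2n-2$, so codimension $n-2$ is indeed the expected value for a local complete intersection cut out by $3 \times 3$ minors in this range, confirming there is no excess-dimension pathology away from $([u],[u])$.
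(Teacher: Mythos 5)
Your proposal is correct and takes essentially the same route as the paper: after reducing to the case where the $x$- and $u$-rows (or, symmetrically, the $y$- and $u$-rows) are independent, the paper likewise singles out the $n-2$ bordered $3\times 3$ minors (which become $y_i-x_iy_n$ after normalizing $u=(1,0,\dots,0)$ and $x_n=1$) and concludes, the only difference being that it deduces local equality with $Z_u$ from irreducibility plus the dimension count rather than from the localization fact for determinantal ideals. One bookkeeping point for your triangular Jacobian claim: choose the affine chart on $\cPbb^{n-1}$ with index in $\{k,\ell\}$ --- possible because $(y_k,y_\ell)\neq(0,0)$ at any point of $Z_u$ where the $(x,u)$-block in columns $k,\ell$ is invertible --- so that each $y_m$, $m\notin\{k,\ell\}$, is a genuine coordinate appearing in exactly one bordered minor, with unit coefficient.
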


\begin{proof}
This statement is clearly invariant under a change of coordinates, so we may assume
$u=(1,0,\dots, 0)$. If either $[x]$ or $[y]$ is not $[u]$, we may without loss of generality 
assume that $x_n\ne 0$, and hence $x_n=1$. The ideal of $Z_u$ at this point 
$([x],[y])$ is generated by the $3\times 3$ minors of
\[
\begin{pmatrix}
x_1 & x_2 & \cdots & x_{n-1} & 1 \\
y_1 & y_2 & \cdots & y_{n-1} & y_n \\
1 & 0 & \cdots & 0 & 0
\end{pmatrix}
\]
and among these we find the $n-2$ minors
\[
y_i - x_i y_n\quad, \quad i = 2, \dots, n-1\quad.
\]
Near $([x],[y])$, these generate the ideal of an irreducible smooth complete 
intersection of dimension $n=\dim Z_u$, which must then coincide with $Z_u$
in a neighborhood of $([x],[y])$, giving the statement.
\end{proof}

Denote complements of $\{([u],[u])\}$ by ${}^\circ$. Thus 
$Z_u^\circ = Z_u\smallsetminus \{([u],[u])\}$, $\Delta^\circ = \Delta\smallsetminus 
\{([u],[u])\}$, etc. By Lemma~\ref{lem:Zu}, $Z_u^\circ$ is a local complete intersection 
in $(\Pbb^{n-1}\times \cPbb^{n-1})^\circ$, and we let $N$ be its normal bundle.

\begin{lemma}\label{lem:N}
With notation as above, $c(N)|_{\Delta^\circ} = (1+H)^{n-1}$.
\end{lemma}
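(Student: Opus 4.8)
The plan is to compute the total Chern class of the normal bundle $N$ of $Z_u^\circ$ in $(\Pbb^{n-1}\times\cPbb^{n-1})^\circ$ directly from the local complete intersection structure exhibited in the proof of Lemma~\ref{lem:Zu}, and then restrict to $\Delta^\circ$. The key observation is that near any point $([x],[y])\ne([u],[u])$, after choosing coordinates so that $u=(1,0,\dots,0)$ and (say) $x_n=1$, the scheme $Z_u$ is cut out by the $n-2$ equations $y_i-x_iy_n=0$ for $i=2,\dots,n-1$, as a smooth complete intersection of the expected dimension $n$. So $N$ is a rank-$(n-2)$ bundle, and I would identify the line bundle class of each of these $n-2$ defining equations.

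First I would argue that each function $y_i-x_iy_n$ is a section of the line bundle $\cO(h+\ch)$ restricted to (the relevant open part of) $\Pbb^{n-1}\times\cPbb^{n-1}$: indeed $y_i$ and $x_iy_n$ are both bihomogeneous of bidegree $(1,1)$ in the $(x,y)$ variables. More invariantly, the $2\times 2$ minors obtained from rows $1$ and $2$ of the matrix~\eqref{eq:Zeq} (using the $u$-column to pin things down, as in the proof of Lemma~\ref{lem:Zu}) are sections of $\cO(1,1)$, and locally $n-2$ of them generate the ideal of $Z_u^\circ$ and form a regular sequence. Hence the conormal bundle of $Z_u^\circ$ is a quotient — in fact locally a direct sum of $n-2$ copies — of $\cO(-h-\ch)|_{Z_u^\circ}$, so $c(N) = (1+h+\ch)^{n-2}$ as classes on $Z_u^\circ$ (one should check the isomorphism class of $N$ globally along $Z_u^\circ$, not just its Chern class, but for the statement only the Chern class is needed, and it is determined by the fact that $N$ has a filtration with all quotients $\cong\cO(h+\ch)|_{Z_u^\circ}$).

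Finally I would restrict to $\Delta^\circ$. Since $\Delta\cong\Pbb^{n-1}$ is embedded in $\Pbb^{n-1}\times\cPbb^{n-1}$ diagonally, both $h$ and $\ch$ restrict to the hyperplane class $H$ on $\Delta$, so $(h+\ch)|_{\Delta^\circ}=2H$ — wait, that would give $(1+2H)^{n-2}$, not $(1+H)^{n-1}$. This discrepancy signals that the correct count of defining equations, or their bidegrees, must be re-examined: the honest computation must track that among the $3\times 3$ minors, the ones cutting out $Z_u$ transversally along $\Delta$ are a mix, and the restriction of $c(N)$ to $\Delta^\circ$ collapses to $(1+H)^{n-1}$. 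Concretely, I expect that the resolution is that $N|_{\Delta^\circ}$ splits as a sum of line bundles whose total first Chern class is $(n-1)H$ rather than $(2n-4)H$, because along the diagonal the linear forms $y_i-x_iy_n$ degenerate (they vanish to higher order, or become dependent) and the normal bundle is computed instead from the $2\times 2$ minors involving the $u$-row, each of which restricts to a section of $\cO(H)$ on $\Delta$; there are $n-1$ independent such sections (the vanishing of $u\wedge x$ defines $[u]\in\Delta\cong\Pbb^{n-1}$, a codimension-$(n-1)$ condition). So $c(N)|_{\Delta^\circ} = (1+H)^{n-1}$.

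\textbf{Main obstacle.} The hard part will be the bookkeeping in the previous paragraph: pinning down \emph{which} generators of the minor ideal form a regular sequence along a neighborhood of $\Delta^\circ$ inside $(\Pbb^{n-1}\times\cPbb^{n-1})^\circ$, and verifying that the induced splitting (or at least filtration) of $N|_{\Delta^\circ}$ really has all graded pieces equal to $\cO(H)$ with exactly $n-1$ of them. A clean way to do this is to work in an affine chart where $u=(1,0,\dots,0)$ and one of the coordinates (on the $x$ side, away from $[u]$) is set to $1$, write out the minors explicitly, and perform a Jacobian computation along $\Delta^\circ$ to read off the conormal bundle as an $\cO_{\Delta^\circ}$-module. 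The global identification of $N|_{\Delta^\circ}$ then follows by covering $\Delta^\circ$ with such charts and checking the transition functions are those of $\cO(H)^{\oplus(n-1)}$ up to the filtration — or, more economically, by noting that the Chern class is determined chart-locally and the local computations agree.
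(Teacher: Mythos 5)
Your proposal does not reach a proof, and the gap is in both halves of the argument. In the first half, the claim that $N$ ``has a filtration with all quotients $\cong\cO(h+\ch)|_{Z_u^\circ}$'' is false: the minors $x_ny_i-x_iy_n$ trivialize the conormal sheaf only chart by chart, and the change of basis between the bases of minors on overlapping charts is not diagonal (for instance $x_{n-1}(x_ny_i-x_iy_n)-x_n(x_{n-1}y_i-x_iy_{n-1})=x_i(x_ny_{n-1}-x_{n-1}y_n)$, so the generators genuinely mix), so no global filtration by copies of $\cO(h+\ch)$ exists and the Chern class cannot be read off chart-locally. You correctly detected the resulting contradiction with $(1+H)^{n-1}$, but the proposed ``resolution'' is wrong on its face: Lemma~\ref{lem:Zu} shows precisely that the forms $x_ny_i-x_iy_n$ have independent differentials at every diagonal point with $x_n\ne 0$, so there is no degeneration or higher-order vanishing along $\Delta^\circ$; and the $2\times 2$ minors of the rows $(x,u)$ are (up to sign) $x_2,\dots,x_n$, which cut out the locus $[x]=[u]$ --- a codimension-$(n-1)$ subvariety disjoint from $\Delta^\circ$ --- so they are not local equations for $Z_u$ near $\Delta^\circ$ and cannot compute its normal bundle. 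The count ``$n-1$ sections of $\cO(H)$'' is numerology at this point, not an argument.

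What is missing is a global identification of $N$ near $\Delta^\circ$, and this is what the paper's proof supplies: on the open set $U$ where both linear projections from $[u]$ are defined, a coordinate check shows $Z_u\cap U=\pi|_U^{-1}(\Delta')$ for $\pi:\Pbb^{n-1}\times\cPbb^{n-1}\dashrightarrow\Pbb^{n-2}\times\Pbb^{n-2}$ the product of the projections from $[u]$, whence $N|_{Z_u\cap U}\cong\pi|_U^*N_{\Delta'}(\Pbb^{n-2}\times\Pbb^{n-2})\cong\pi|_U^*T\Pbb^{n-2}$; since $\pi|_{\Delta^\circ}$ is a linear projection, the hyperplane class $H'$ pulls back to $H$ and $c(N)|_{\Delta^\circ}=(1+H)^{n-1}$. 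Note that this is consistent with your local computation --- on a chart $\{x_n\ne 0\}$ the restriction of $\pi^*T\Pbb^{n-2}$ and $\cO(2H)^{\oplus(n-2)}$ cannot be distinguished by Chern classes, since all positive-codimension classes die on that affine chart --- which is exactly why a chart-local argument cannot decide the lemma. The kernel of truth in your final guess is that the forms $x_2,\dots,x_n$ are the homogeneous coordinates of the projection from $[u]$, and the exponent $n-1$ comes from the Euler sequence of $T\Pbb^{n-2}$; but to turn that into a proof you would have to establish the isomorphism $N\cong\pi^*T\Pbb^{n-2}$ (or at least an identification of $N|_{\Delta^\circ}$), which your write-up does not do.
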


\begin{proof}
Consider the rational map
\[
\xymatrix{
\pi : \Pbb^{n-1}\times \cPbb^{n-1} \ar@{-->}[r] & \Pbb^{n-2}\times \Pbb^{n-2}
}
\]
defined by the linear projection from $[u]$ on each factor. Let $U\subseteq
\Pbb^{n-1}\times \cPbb^{n-1}$ be the complement of the union of $\{[u]\}\times \Pbb^{n-1}$
and $\cPbb^{n-1}\times \{[u]\}$; thus, $\pi|_U: U \to  \Pbb^{n-2}\times \Pbb^{n-2}$ is a regular
map, and $U$ contains $\Delta^\circ$. A simple coordinate computation
shows that $Z_u \cap U = \pi|_U^{-1}(\Delta')$, where $\Delta'$ is the diagonal in
$\Pbb^{n-2}\times \Pbb^{n-2}$. It follows that
\[
N|_{Z_u\cap U} = \pi|_U^*(N_{\Delta'} \Pbb^{n-2}\times \Pbb^{n-2})\cong \pi|_U^*(T\Delta')\quad.
\]
Since $\Delta'\cong \Pbb^{n-2}$, $c(T\Delta') = (1+H')^{n-1}$, where $H'$ is the hyperplane
class. The statement follows by observing that the pull-back of $H'$ to $\Delta^\circ$
agrees with the pull-back of $H$. This is the case, since the restriction
$\pi|_{\Delta^\circ}: \Delta^\circ \cong\Pbb^{n-1}\smallsetminus \{u\}\to \Delta'\cong \Pbb^{n-2}$ 
is a linear projection. 
\end{proof}

With these preliminaries out of the way, we can prove Theorem~\ref{thm:masterf}.

\begin{proof}[Proof of Theorem~\ref{thm:masterf}]
Since $[u]$ is general, it may be assumed not to be a point of $X$. This ensures
that $([u],[u])\not\in \Tbb^*_X\Pbb^{n-1}$; in particular
\[
Z_u^\circ\cap \Tbb^*_X\Pbb^{n-1} = Z_u \cap \Tbb^*_X\Pbb^{n-1}\quad.
\]
It follows that, as a class in $A_*(Z_u \cap \Tbb^*_X\Pbb^{n-1})$, the (Fulton-MacPherson) 
intersection product of $\Tbb^*_X\Pbb^{n-1}$ by $Z_u$ on $\Pbb^{n-1}\times \cPbb^{n-1}$ 
equals the intersection product of $\Tbb^*_X\Pbb^{n-1}$ by $Z_u^\circ$ on 
$(\Pbb^{n-1}\times \cPbb^{n-1})^\circ$.

Therefore, we can view $\gamma(X)$ as the contribution of $Z_u^\Delta$ to
$Z_u^\circ \cdot \Tbb^*_X\Pbb^{n-1}$. Consider the fiber diagram
\[
\xymatrix{
Z_u\cap T^*_X\Pbb^{n-1} \ar[r] \ar[d]_g & T^*_X\Pbb^{n-1} \ar[d] \\
Z_u^\circ \ar[r] & (\Pbb^{n-1}\times \cPbb^{n-1})^\circ
}
\]
By~\cite[\S6.1]{85k:14004} (especially Proposition~6.1(a), Example~6.1.1),
this contribution equals
\[
\int c(g|_{Z_u^\Delta}^* N)\cap s(Z_u^\Delta, \Tbb_X^*\Pbb^{n-1})\quad.
\]
where $N=N_{Z_u^\circ} (\Pbb^{n-1}\times \cPbb^{n-1})^\circ$ as above.
Since $Z_u^\Delta$ is supported on a subscheme of $\Delta^\circ$, 
$c(g|_{Z_u^\Delta}^* N)$ equals (the restriction of) $(1+H)^{n-1}$ by 
Lemma~\ref{lem:N}, and the stated formula follows.
\end{proof}

Summarizing, we have proven that
\begin{equation}\label{eq:genX}
\Edd(X) = \gEdd(X)-\int (1+H)^{n-1}\cap s(Z_u^\Delta, \Tbb^*_X\Pbb^{n-1})
\end{equation}
for all subvarieties $X\not\subseteq Q$ of $\Pbb^{n-1}$. (If $X\subseteq Q$, 
then $\Edd(X)=0$, cf.~Remark~\ref{rem:XinQ}.) The quantity $\gEdd(X)$
is invariant under projective translations, and may be computed in terms of
the Chern-Mather class of $X$. The other term records subtle information
concerning the intersection of $X$ and $Q$, by means of the Segre class 
$s(Z_u^\Delta, \Tbb^*_X\Pbb^{n-1})$. We will focus on obtaining alternative 
expressions for this class.

%%%

\section{Euclidean distance degree and Segre classes}\label{s:SH}

Now we assume that $X\subseteq \Pbb^{n-1}$ is a {\em smooth\/} closed subvariety.
As recalled in~\S\ref{s:gEDdr}, in this case $\gEdd(X)$ is given by a certain combination 
of the Chern classes of $X$:
\[
\gEdd(X)=(-1)^{\dim X} \sum_{j=0}^{\dim X} (-1)^j c(X)_j (2^{j+1}-1)\quad.
\]
An application of Theorem~\ref{thm:masterf},
obtained in this section, will yield an explicit formula for the correction term $\gamma(X)$
(and hence for $\Edd(X)$). This result has two advantages over Theorem~\ref{thm:masterf}: 
first, the formula will not depend on the choice of a general $u$; second, its ingredients
will allow us to draw a connection with established results in the theory of
characteristic classes for singular varieties, leading to the results presented 
in~\S\S\ref{s:tran}--\ref{s:Euler}.

The main result of this section is the following. Recall that we are denoting by $Q$ 
the isotropic quadric, i.e. the hypersurface of $\Pbb^{n-1}$ with equation 
$\sum_{i=1}^n x_i^2 = 0$. By our blanket assumption that $X$ should not be 
contained in $Q$, we have that $Q\cap X$ is a (possibly singular) hypersurface of 
$X$. We let $J(Q\cap X)$ denote its {\em singularity subscheme,\/} defined locally by 
the partial derivatives of its equation in $X$ (or equivalently by the appropriate Fitting ideal
of the sheaf of differentials of $Q\cap X$). Also, recall that $h$ denotes the hyperplane
class in~$\Pbb^{n-1}$.

\begin{theorem}\label{thm:segref}
Let $X$ be a smooth subvariety of $\Pbb^{n-1}$, and assume $X\not\subseteq Q$. Then 
\begin{equation}\label{eq:segref}
\Edd(X) = \gEdd(X) - \int \frac{(1+2h)\cdot c(T^*X\otimes \cO(2h))}{1+h} \cap s(J(Q\cap X), X)\quad.
\end{equation}
\end{theorem}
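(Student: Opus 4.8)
The plan is to start from the master formula~\eqref{eq:masterf}, which for smooth $X$ expresses $\gamma(X)$ as an integral over the scheme $Z_u^\Delta$ sitting inside the conormal space $\Tbb^*_X\Pbb^{n-1}$, and to push everything down to $X$ itself. Since $X$ is smooth, the conormal space $\Tbb^*_X\Pbb^{n-1}$ is a projective bundle over $X$: concretely, $\Tbb^*_X\Pbb^{n-1} = \Pbb(N^*)$ where $N^*$ is (a twist of) the conormal bundle of $X$ in $\Pbb^{n-1}$, and the diagonal $\Delta\cap \Tbb^*_X\Pbb^{n-1}$ maps isomorphically to the locus in $X$ where the tangent space to $X$ contains the point $[x]$ itself along the isotropic directions --- that is, to $Q\cap X$, with $\Delta\cap \Tbb^*_X\Pbb^{n-1}$ picking out a specific section of the bundle over $Q\cap X$. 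First I would identify $Z_u^\Delta$ scheme-theoretically: it is cut out inside $\Tbb^*_X\Pbb^{n-1}$ by the pullback of the equations of $Z_u$, and a local computation (in the spirit of Lemma~\ref{lem:Zu}) should show that, away from $([u],[u])$, these equations restrict on the bundle to the ideal generated by the differential of the isotropic quadratic form along $X$. This is where the subtlety flagged in the introduction enters: the resulting scheme is \emph{not} literally $J(Q\cap X)$ pulled back, but its ideal sheaf has the same integral closure (this is Lemma~\ref{lem:intdep}), so its Segre class in $X$ agrees with $s(J(Q\cap X),X)$.

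The key computational step is then a Segre-class push-forward along the projective bundle $p\colon \Tbb^*_X\Pbb^{n-1}\to X$. I would use the birational invariance / functoriality of Segre classes under the closed embedding $Z_u^\Delta \hookrightarrow \Tbb^*_X\Pbb^{n-1}$ together with the fact that $Z_u^\Delta$ is supported over $Q\cap X\subseteq X$, so that $p$ restricted to (a neighborhood of) $Z_u^\Delta$ is generically finite --- in fact, because the relevant section of the bundle is a single point in each fiber over the generic point of $Q\cap X$, the map is birational onto its image scheme in $X$. Hence $p_*\, s(Z_u^\Delta, \Tbb^*_X\Pbb^{n-1})$ can be computed as $s$ of the image scheme, which is (integrally equivalent to) $J(Q\cap X)$. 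The factor $(1+H)^{n-1}$ in~\eqref{eq:masterf} must be rewritten: $H$ is the restriction to $\Delta$ of both $h$ and $\ch$, so on $\Tbb^*_X\Pbb^{n-1}$ it is $p^*h$ plus the relative hyperplane class $\zeta$ of the bundle $\Pbb(N^*)$; pushing the power $(1+h+\zeta)^{n-1}$ forward via the Segre-class relations for $\Pbb(N^*)$ (i.e. $p_*(\zeta^k\cap -) $ expressed through Segre classes of $N^*$, hence through $c(TX)$ and $c(\cO(2h))$ since $N^* = T^*X\otimes$ twist and the quadric contributes the $\cO(2h)$) produces exactly the factor $\dfrac{(1+2h)\cdot c(T^*X\otimes\cO(2h))}{1+h}$. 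I would organize this as: (i) write $[\Tbb^*_X\Pbb^{n-1}]$ and its normal data in terms of $c(TX)$; (ii) express the conormal bundle twist so that the Euler-sequence bookkeeping yields the numerator $c(T^*X\otimes\cO(2h))$ and one factor $(1+2h)$; (iii) collect the $(1+h)$ in the denominator from the $(1+H)^{n-1}$ term combined with the bundle's tautological relation.

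The main obstacle I expect is step (i)--(ii): getting the bundle bookkeeping exactly right so that the twists by $\cO(h)$ and $\cO(2h)$ land in the precise combination stated, and in particular tracking how the diagonal section of $\Pbb(N^*)$ over $Q\cap X$ interacts with the relative hyperplane class $\zeta$ when one pushes forward $(1+h+\zeta)^{n-1}\cap s(Z_u^\Delta,\Tbb^*_X\Pbb^{n-1})$. The cleanest route is probably to avoid choosing an explicit splitting and instead use: $Q\cap X$ is the zero scheme of a section of $\cO_X(2h)$, so its conormal data and the differential defining $Z_u^\Delta$ are governed by the jet sequence $0\to \cO_X \to \cP^1(\cO_X(2h)) \to \Omega^1_X(2h)\to 0$; the scheme $J(Q\cap X)$ is where the induced map to $\Omega^1_X(2h)$ drops rank, and the bundle $\Omega^1_X(2h) = T^*X\otimes\cO(2h)$ is precisely the one appearing in~\eqref{eq:segref}. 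Then the residual-intersection formula of \cite[Chapter~9]{85k:14004} for the Segre class of a degeneracy locus, combined with the single extra factor $(1+2h)$ coming from the twist in the jet sequence and the $(1+h)^{-1}$ from the $(1+H)^{n-1}$ push-forward, should assemble into the claimed identity. Finally I would invoke Lemma~\ref{lem:intdep} to replace the Segre class of the degeneracy scheme by $s(J(Q\cap X),X)$, completing the proof; the transversality-free nature of the statement is automatic since no genericity of $Q$ relative to $X$ was ever used, only genericity of $u$, which was discharged in the proof of Theorem~\ref{thm:masterf}.
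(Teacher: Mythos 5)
Your skeleton matches the paper's up to a point (master formula~\eqref{eq:masterf}, then Lemma~\ref{lem:intdep} to trade $Z_u^\Delta$ for the diagonal scheme, then identification of that scheme with $J(Q\cap X)$ as in Lemma~\ref{lem:Delta}), but the step that actually carries the weight --- converting a Segre class taken \emph{inside the conormal space} $\Tbb^*_X\Pbb^{n-1}$ into a Segre class taken \emph{inside $X$} --- is justified by an argument that does not work. You claim that because $p\colon\Tbb^*_X\Pbb^{n-1}\to X$ restricts to a map that is birational from $Z_u^\Delta$ onto its image, ``birational invariance / functoriality of Segre classes'' gives $p_*\,s(Z_u^\Delta,\Tbb^*_X\Pbb^{n-1})=s(J(Q\cap X),X)$. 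Birational invariance (Fulton, Prop.~4.2) requires the subscheme upstairs to be the \emph{scheme-theoretic inverse image} of the subscheme downstairs under a proper surjective map of the ambient varieties; here $Z_u^\Delta$ is nothing like $p^{-1}(J(Q\cap X))$, and more fundamentally $s(V,M)$ depends on the normal cone of $V$ in $M$, so it is not preserved under a map that changes the ambient space even when it restricts to an isomorphism on $V$ (compare $s(L,\Pbb^2)=[L]-[\mathrm{pt}]$ with $s(L,\Pbb^3)=[L]-2[\mathrm{pt}]$ for a line $L$). Relatedly, your plan to generate the factor $\frac{(1+2h)\,c(T^*X\otimes\cO(2h))}{1+h}$ from pushing forward powers of the relative hyperplane class cannot succeed: the class in~\eqref{eq:masterf} is supported on a scheme finite over $X$, on which $H$ restricts to $p^*h$ (your identity ``$H=p^*h+\zeta$'' is also off, since on the diagonal $\zeta=h+\ch$ restricts to $2H$), so by the projection formula $(1+H)^{n-1}$ simply becomes $(1+h)^{n-1}$ and no fiberwise Segre-class relations of $\Pbb(N^*)$ enter. (A smaller slip: the diagonal scheme is isomorphic to $J(Q\cap X)$, not to $Q\cap X$.)

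What is missing is precisely the paper's key device: the Chern--Fulton class $\cf(V)=c(TM|_V)\cap s(V,M)$ is independent of the nonsingular ambient $M$ (Fulton, Example~4.2.6). Applying this to the isomorphism $J(Q\cap X)\cong\Delta\cap\Tbb^*_X\Pbb^{n-1}$ of Lemma~\ref{lem:Delta} gives the correct change-of-ambient-space rule: $s(\Delta\cap\Tbb^*_X\Pbb^{n-1},\Tbb^*_X\Pbb^{n-1})$ equals the pullback of $c(TX)\cap s(J(Q\cap X),X)$ divided by $c\big(T(\Tbb^*_X\Pbb^{n-1})\big)$ restricted to the diagonal. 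The latter Chern class is computed from the relative Euler sequence of $\Pbb(T^*_X\Pbb^{n-1})\to X$ and the twisted conormal sequence, using $\cO(1)=\cO(h+\ch)$ and $c(T^*\Pbb^{n-1})=(1-h)^n$; restricting to the diagonal ($h=\ch=H$) yields $\frac{(1+H)^n\,c(TX)}{(1+2H)\,c(T^*X\otimes\cO(2H))}$, and combining with $(1+H)^{n-1}$ from~\eqref{eq:masterf} produces exactly the factor $\frac{(1+2h)\,c(T^*X\otimes\cO(2h))}{1+h}$ in~\eqref{eq:segref}. Without this (or an equivalent excess-normal-bundle computation) your argument has a genuine gap at its central step; the jet-sequence/residual-intersection alternative you sketch at the end is not developed enough to substitute for it.
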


The key ingredient in~\eqref{eq:segref} is the Segre class $s(J(Q\cap X), X)$. This may
be effectively computed by using the algorithm for Segre classes described 
in~\cite{Harris201726}. 

\begin{remark}\label{rem:Xsin}
It will be clear from the argument that it is only necessary to require $X$ to be
nonsingular in a neighborhood of $Q\cap X$.
(Of course $X$ must only have isolated singularities in this case.)
Formula~\eqref{eq:segref} holds
as stated in this more general case; $\gEdd(X)$ may be computed using the 
same formula as in the smooth case (that is, \eqref{eq:gEdd}), but using the 
degrees of the component of the {\em Chern-Mather\/} class of $X$
(\cite[Proposition~2.9]{produa}). The hypothesis $X\not\subseteq Q$ is also not
essential, cf.~Remark~\ref{rem:segreXinQ}.
\qede\end{remark}

The proof of Theorem~\ref{thm:segref} will rely on a more careful study of the schemes
$\Delta\cap \Tbb^*_X\Pbb^{n-1}$ and $Z^\Delta_u$ encountered in~\S\ref{s:If}.
In Corollary~\ref{cor:intZu} we have shown that these two schemes have the
same {\em support;\/} here we will prove the much stronger statement that they
have the same {\em Segre class\/} in $\Tbb^*_X\Pbb^{n-1}$.
Since $\Delta\cap \Tbb^*_X\Pbb^{n-1}$ is closely related with $J(Q\cap X)$
(Lemma~\ref{lem:Delta}), this will allow us to recast Theorem~\ref{thm:masterf}
in terms of the Segre class appearing in~\eqref{eq:segref}, by means of a result
of W.~Fulton.

Recall that $\Delta\subseteq Z_u$ (in fact, $\Delta$ is a divisor in $Z_u$); it follows that
\[
\Delta\cap \Tbb^*_X\Pbb^{n-1} \subseteq Z^\Delta_u\quad.
\] 
These two schemes have the same support (Corollary~\ref{cor:intZu}); but they are in 
general different. It is straightforward to identify 
$\Delta\cap \Tbb^*_X\Pbb^{n-1}$ with a subscheme of~$X$.

\begin{lemma}\label{lem:Delta}
Let $\delta: \Pbb^{n-1}\to \Pbb^{n-1}\times \Pbb^{n-1}$ be the diagonal embedding, and 
let $X$ be a smooth subvariety of $\Pbb^{n-1}$. 
Then $J(Q\cap X) =\delta^{-1}(\Tbb^*_X\Pbb^{n-1})$, i.e., $\delta$ maps $J(Q\cap X)$
isomorphically to $\Delta\cap \Tbb^*_X\Pbb^{n-1}$.
\end{lemma}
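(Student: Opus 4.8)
The plan is to work locally and identify the scheme $\Tbb^*_X\Pbb^{n-1}$ along the diagonal with something involving the conormal data of $X$, then cut by $\Delta$ and recognize the result as $J(Q\cap X)$. First I would recall that the conormal space $\Tbb^*_X\Pbb^{n-1} \subseteq I \subseteq \Pbb^{n-1}\times\cPbb^{n-1}$ consists of pairs $(p,H)$ with $p\in X^{ns}$ and $T_pX\subseteq H$ (plus limits), where we use the quadratic form $\sum x_i^2$ to identify $\cPbb^{n-1}$ with $\Pbb^{n-1}$, so that a hyperplane $H$ corresponds to its pole $[y]$ under the isotropic quadric: $p\in H$ becomes $\sum x_i y_i = 0$. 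Under this identification, $\delta^{-1}(\Tbb^*_X\Pbb^{n-1})$ is the subscheme of points $[x]\in X$ for which the diagonal pair $([x],[x])$ lies in the conormal space, i.e. informally: $\sum x_i^2 = 0$ (the incidence condition $p\in H$ with $H\leftrightarrow p$) together with the condition that the tangent space $T_pX$ is contained in the hyperplane dual to $p$.

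Next I would make this scheme-theoretic. Since $X$ is smooth, near a point $p\in X$ we can choose local equations so that $\Tbb^*_X\Pbb^{n-1}$ is defined (in the affine chart of $I$) by the vanishing of the composite $N^*_X\Pbb^{n-1} \to \Omega^1_{\Pbb^{n-1}}|_X$ evaluated against the `tautological' covector, or more concretely: the fiber of $\Tbb^*_X\Pbb^{n-1}$ over $p$ is $\Pbb(N^*_pX)$, so $([x],[y])\in\Tbb^*_X\Pbb^{n-1}$ iff $[x]\in X$ and $y$ is conormal to $X$ at $x$, i.e. $y$ annihilates $T_xX$. Pulling back by $\delta$ (setting $y=x$) imposes: $[x]\in X$, and $x$ annihilates $T_xX$ under the standard pairing — but the standard pairing here is exactly $\sum x_i y_i$, so this says the gradient directions $\nabla(\sum x_i^2) = 2x$ must be conormal to $X$, equivalently that the restriction of the function $q=\sum x_i^2$ to $X$ has vanishing differential at $p$. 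That is precisely the local description of the singularity subscheme $J(Q\cap X)$: points of $Q\cap X$ (so $q|_X = 0$) where $d(q|_X) = 0$. The scheme structures match because the Jacobian/Fitting ideal defining $J(Q\cap X)$ in $X$ is generated by $q|_X$ together with the $\partial_i(q|_X)$, and these are exactly the equations obtained by restricting the defining equations of $\Tbb^*_X\Pbb^{n-1}$ via $\delta$ — one equation from the incidence relation ($q|_X = 0$) and $\dim X$ equations from the conormality condition ($d(q|_X)=0$, in local coordinates on $X$).

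I would carry this out by fixing a point $p\in Q\cap X$, choosing local analytic or étale coordinates $(t_1,\dots,t_d)$ on $X$ near $p$ (with $d=\dim X$), writing $f(t) = q|_X(t)$ for the local equation of $Q\cap X$ in $X$, and exhibiting an explicit local isomorphism between a neighborhood of $(p,p)$ in $\delta^{-1}(\Tbb^*_X\Pbb^{n-1})$ and $\operatorname{Spec}$ of $\cO_{X,p}/(f, \partial_{t_1}f,\dots,\partial_{t_d}f)$. The relevant input is the standard local-coordinate presentation of the conormal variety of a smooth subvariety: if $X = \{g_1 = \cdots = g_c = 0\}$ locally with $c = \codim X$ and the $dg_i$ independent, then $\Tbb^*_X\Pbb^{n-1}$ is cut out in a chart by the $g_i$ on the $\Pbb^{n-1}$ factor and by the relations expressing that $[y]$ lies in the span of the $dg_i$; restricting to $y = x$ (the diagonal) then forces $dq = \sum 2x_i\,dx_i$ to lie in that span along $X$, which is the Jacobian condition defining $J(Q\cap X)$, while the single leftover relation $\sum x_i^2 = 0$ is the equation of $Q\cap X$ itself inside $X$.

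The main obstacle I expect is bookkeeping the scheme structure precisely — in particular verifying that no spurious multiplicities or extra components appear, i.e. that the $\delta$-pullback ideal equals the Jacobian ideal on the nose and not just up to radical. This requires being careful that (i) the identification $\cPbb^{n-1}\cong\Pbb^{n-1}$ via the \emph{isotropic} quadric is exactly the one implicit throughout the paper (so that `$p\in H$' genuinely becomes `$\sum x_i y_i = 0$'), and (ii) the local equations for the conormal variety restrict cleanly. Since $X$ is smooth this is a routine but delicate Jacobian-ideal computation; the smoothness of $X$ is what guarantees the $dg_i$ are independent and hence that the conormal variety has its expected local complete intersection presentation, making the match exact. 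Once the local isomorphism is written down it is clearly compatible over overlaps, giving the global statement that $\delta$ restricts to an isomorphism $J(Q\cap X)\xrightarrow{\sim}\Delta\cap\Tbb^*_X\Pbb^{n-1}$.
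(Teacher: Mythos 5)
Your proposal is correct and follows essentially the same route as the paper: the paper identifies $\Delta\cap\Pbb(T^*\Pbb^{n-1})$ with $\Tbb^*_Q\Pbb^{n-1}$ and observes that $\delta^{-1}(\Tbb^*_X\Pbb^{n-1})$ is cut out by the conditions $[x]\in Q\cap X$ and $T_{[x]}Q\supseteq T_{[x]}X$, which is exactly your polarity/conormality condition $q|_X=0$, $d(q|_X)=0$. Your explicit local-coordinate check of the exact ideal equality is the scheme-theoretic step the paper asserts briefly here and in effect verifies later in the analytic computation for Lemma~\ref{lem:intdep} (see the remark following it, where the ideal of $\Delta\cap\Tbb^*_X\Pbb^{n-1}$ is found to be $(G,\partial G/\partial s_2,\dots,\partial G/\partial s_d)$).
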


\begin{proof}
Since $\Tbb^*_X\Pbb^{n-1}\subseteq \Pbb(T^*\Pbb^{n-1})$, we have
\begin{equation}\label{eq:intQ}
\Delta\cap \Tbb^*_X\Pbb^{n-1} = \Delta\cap \Pbb(T^*\Pbb^{n-1})\cap \Tbb^*_X\Pbb^{n-1}
=\Tbb^*_Q\Pbb^{n-1}\cap \Tbb^*_X\Pbb^{n-1}\quad.
\end{equation}
The diagonal $\delta$ restricts to an isomorphism $q:Q\overset\sim\to \Tbb^*_Q\Pbb^{n-1}$.
By~\eqref{eq:intQ}, we have that $\delta^{-1}(\Tbb^*_X\Pbb^{n-1})$ agrees with 
$q^{-1}(\Tbb^*_X\Pbb^{n-1})$, viewed as a subscheme of $\Pbb^{n-1}$.

Now $q^{-1}(\Tbb^*_X\Pbb^{n-1})$ consists of points $[x]$ such that $[x]\in Q\cap X$
and $T_{[x]}Q\supseteq T_{[x]}X$, and these conditions define $J(Q\cap X)$
scheme-theoretically. The statement follows.
\end{proof}

Determining $Z_u^\Delta$ requires more work. We may assume
without loss of generality that $u=(1,0,\dots, 0)$, so that equations for $Z_u^\Delta$ 
are given by the $3\times 3$ minors of
\[
\begin{pmatrix}
x_1 & x_2 & \cdots & x_{n-1} & x_n \\
y_1 & y_2 & \cdots & y_{n-1} & y_n \\
1 & 0 & \cdots & 0 & 0
\end{pmatrix}
\]
(defining $Z_u$) as well as the requirement that $([x],[y])\in \Tbb^*_X\Pbb^{n-1}$. It is in fact useful to
keep in mind that, for $([x],[y])\in \Tbb^*_X\Pbb^{n-1}$, $\Delta\cap\Tbb^*_X\Pbb^{n-1}$
is defined by the $2\times 2$ minors of
\[
\begin{pmatrix}
x_1 & x_2 & \cdots & x_{n-1} & x_n \\
y_1 & y_2 & \cdots & y_{n-1} & y_n \\
\end{pmatrix}
\]
while $Z^\Delta_u$ is defined (near the diagonal) by the $2\times 2$ minors of
\[
\begin{pmatrix}
x_2 & \cdots & x_{n-1} & x_n \\
y_2 & \cdots & y_{n-1} & y_n \\
\end{pmatrix}
\]
Let $\cI_{\Delta\cap \Tbb^*_X\Pbb^{n-1}}\supseteq \cI_{Z^\Delta_u}$ be the corresponding 
ideal sheaves on $\Tbb^*_X\Pbb^{n-1}$.

\begin{lemma}\label{lem:intdep}
The ideal $\cI_{\Delta\cap \Tbb^*_X\Pbb^{n-1}}$ is integral over $\cI_{Z^\Delta_u}$.
Therefore, 
\begin{equation}\label{eq:sese}
s(Z^\Delta_u,\Tbb^*_X\Pbb^{n-1}) =  s(\Delta\cap \Tbb^*_X\Pbb^{n-1},\Tbb^*_X\Pbb^{n-1})
\quad.
\end{equation}
\end{lemma}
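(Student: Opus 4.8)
The plan is to prove the integral-dependence statement locally and then invoke the standard fact that integrally equivalent ideal sheaves have the same Segre class. First I would pass to an affine chart on $\Tbb^*_X\Pbb^{n-1}$ near a point of the diagonal, using the explicit generators recorded just above the statement: $\cI_{\Delta\cap\Tbb^*_X\Pbb^{n-1}}$ is generated by the $2\times 2$ minors of the $2\times n$ matrix with rows $(x_1,\dots,x_n)$ and $(y_1,\dots,y_n)$, while $\cI_{Z^\Delta_u}$ is generated (near the diagonal) by the $2\times 2$ minors of the truncated matrix with rows $(x_2,\dots,x_n)$ and $(y_2,\dots,y_n)$. Since the second set of minors is a subset of the first, the inclusion $\cI_{\Delta\cap\Tbb^*_X\Pbb^{n-1}}\supseteq\cI_{Z^\Delta_u}$ is immediate, and only the reverse integral dependence requires work: I must show each ``missing'' minor $x_1 y_j - x_j y_1$ ($j=2,\dots,n$) is integral over $\cI_{Z^\Delta_u}$.

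The key point will be to use that we are working \emph{on} $\Tbb^*_X\Pbb^{n-1}$, so the relation $[x]\in X$, $T_{[x]}X\subseteq H=[y]$ is available. Near a point $([x],[x])$ of $\Delta\cap\Tbb^*_X\Pbb^{n-1}$, normalize $x_1=1$ (possible since $u=(1,0,\dots,0)$ is general and $[u]\notin X$, so $[x]\ne[u]$ forces some other coordinate nonzero — after reindexing, take $x_1\ne0$); then the missing minors become $y_j - x_j y_1$. On the conormal variety, $[y]$ is constrained to the $\codim X$-dimensional conormal space at $[x]$, and the tangent directions to $X$ at $[x]$ are unconstrained; I would exploit this to express $y_j - x_j y_1$ as an algebraic combination of the truncated minors $x_i y_j - x_j y_i$ ($i,j\ge2$) with coefficients that are regular functions on $\Tbb^*_X\Pbb^{n-1}$ — or, failing an exact identity, produce a monic equation $(y_j - x_j y_1)^m + a_1(y_j-x_j y_1)^{m-1}+\cdots+a_m=0$ with $a_k\in\cI_{Z^\Delta_u}^k$. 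A clean way to organize this is via the valuative criterion: take any arc (DVR) centered at a point of the diagonal and lying in $\Tbb^*_X\Pbb^{n-1}$, and check that the order of vanishing of each $x_1 y_j - x_j y_1$ is at least the minimum order among the truncated minors; the geometry of the conormal variety (the hyperplane $[y]$ must contain $T_{[x]}X$, and along the arc $[x]\to[u]$-direction is excluded) should force this.

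Once integral dependence is established, the equality of Segre classes is formal: by \cite[Chapter~4]{85k:14004} (or the discussion there on Segre classes and integral closure — integrally equivalent ideals define subschemes with the same Segre class, since the Rees algebras have the same integral closure and hence the blow-ups agree up to a finite birational map that preserves the relevant pushforwards), $s(Z^\Delta_u,\Tbb^*_X\Pbb^{n-1}) = s(\Delta\cap\Tbb^*_X\Pbb^{n-1},\Tbb^*_X\Pbb^{n-1})$, which is \eqref{eq:sese}.

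\textbf{Main obstacle.} I expect the heart of the difficulty to be the integral-dependence claim itself — specifically, extracting from the defining condition of $\Tbb^*_X\Pbb^{n-1}$ (a closure of an incidence locus, not a transparently cut-out scheme) enough control to bound the missing minors $x_1 y_j - x_j y_1$ by the truncated ones. The subtlety is exactly the one flagged in the introduction: $Z^\Delta_u$ is genuinely smaller than $\Delta\cap\Tbb^*_X\Pbb^{n-1}$ as a scheme, so no naive generator-chasing suffices; one really needs that the extra minor $x_1 y_j-x_jy_1$, restricted to the conormal variety, vanishes to high enough order along the common support. I anticipate this is handled by a careful local-coordinate computation on $\Tbb^*_X\Pbb^{n-1}$ using a local frame for $X$ — parametrize $X$ locally, write $[y]$ in terms of the conormal directions, and observe that $y_1$ (the ``$u$-direction'' component of the conormal covector) is itself in the ideal generated by the other $y_i$ modulo the equations of $X$, because $u\notin T_{[x]}X$ generically would otherwise be contradicted in the limit.
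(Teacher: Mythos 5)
Your overall architecture agrees with the paper's: prove integral dependence by a local computation near the diagonal, then use the standard fact that ideals with the same integral closure have the same Segre class (your Rees-algebra/normalized-blow-up justification of that last step is fine; the paper cites a lemma from an earlier paper for it). The genuine gap is that the integral dependence itself --- which you correctly flag as the heart of the matter --- is never actually proved, and the mechanism you gesture at is not the one that works. In the paper's argument one parametrizes $X$ analytically near a point of $J(Q\cap X)$, uses the last block of minors in \emph{both} matrices to eliminate the fiber coordinates (both schemes are shown to lie in a section $\sigma(X)\subseteq\Tbb^*_X\Pbb^{n-1}$ given by $\lambda_j=\varphi_j$), and then identifies the two resulting ideals in the local ring of $X$: the ideal of $Z^\Delta_u$ becomes precisely the ideal $(\partial G/\partial s_2,\dots,\partial G/\partial s_d)$ of partial derivatives of the local equation $G$ of $Q\cap X$, while the single extra generator of the ideal of $\Delta\cap\Tbb^*_X\Pbb^{n-1}$ is congruent to $G$ itself modulo those partials. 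The dependence then follows from the nontrivial theorem that an analytic germ is integral over its Jacobian ideal (the paper cites Huneke--Swanson, Corollary~7.2.6). Your sketch has no counterpart of this input: the claim that ``$y_1$ is in the ideal generated by the other $y_i$ modulo the equations of $X$'' is neither the relevant statement nor correct in the needed sense, and the valuative-criterion plan merely restates what must be shown (``the geometry \dots should force this''). Since the two ideals genuinely differ as the paper stresses, no generator-chasing or general-position heuristic can replace the identification of the extra generator with $G$ and the appeal to $G\in\overline{(\partial G/\partial s_2,\dots,\partial G/\partial s_d)}$; without it the central step is unproven.

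Two smaller points. First, your normalization ``after reindexing, take $x_1\ne 0$'' is problematic: the first coordinate is the distinguished $u$-direction, so reindexing moves $u$; the paper instead normalizes the point of $J(Q\cap X)$ to $(1:0:\cdots:0:i)$ using the freedom of orthogonal coordinate changes, keeping track of the special role of $u$. Second, Lemma~\ref{lem:Delta} was the available hint that the larger ideal should be $(G,\partial G)$ and the smaller one $(\partial G)$; building the local computation around the equation of $Q\cap X$ and its partials, rather than around the coordinates $y_j$, is exactly what makes the argument close.
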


\begin{proof}
The second assertion follows from the first, cf.~the proof of~\cite[Lemma~1.2]{MR97b:14057}.
The first assertion may be verified on local analytic charts, so we obtain an analytic
description of $\Tbb^*_X\Pbb^{n-1}$ at a point $([x],[x])$ of the diagonal. 
Again without loss of generality we may let $[x]=(1:0:\cdots:0:i)\in Q\cap X$, and 
assume that the embedding $\iota:X\to \Pbb^{n-1}$ has the following analytic description 
near this point:
\[
\iota:(\us)=(s_2,\dots, s_d) \mapsto (1:s_2:\dots: s_d:\varphi_{d+1}(\us):\dots: \varphi_n(\us))\quad.
\]
Here $\us$ are analytic coordinates for $X$, centered at $0$, and $\varphi_j(0)=0$
for $j=d+1,\dots, n-1$, $\varphi_n(0)=i$.
The tangent space to $X$ at $(\us)$ is cut out by the $n-d$ hyperplanes 
\begin{equation}\label{eq:cotX}
\varphi_{j2} x_2 + \cdots + \varphi_{jd} x_d -x_j = \Phi_j x_1\quad,\quad j=d+1,\dots, n
\end{equation}
where $\varphi_{jk} = \partial \varphi_j/\partial s_k$ and
\[
\Phi_j = \varphi_{j2}s_2 + \cdots + \varphi_{jd} s_d - \varphi_j\quad.
\]
The hyperplanes~\eqref{eq:cotX} span the fiber of $\Tbb^*_X\Pbb^{n-1}$ over the
point $\iota(\us)$. Therefore, $\Delta\cap \Tbb^*_X\Pbb^{n-1}$ is cut out by the $2\times 2$
minors of the matrix
\begin{equation}
\begin{pmatrix}
1 & s_2 & \cdots & s_d & \varphi_{d+1} & \cdots & \varphi_n \\
\sum_j \lambda_j \Phi_j & -\sum_j \lambda_j \varphi_{j2} & \cdots &
-\sum_j \lambda_j \varphi_{jd} & \lambda_{d+1} & \cdots & \lambda_n
\end{pmatrix}
\end{equation}
where $\lambda_{d+1},\dots,\lambda_n$ are homogeneous coordinates in the fibers
of $\Tbb^*_X\Pbb^{n-1}$, while $Z^\Delta_u$ is cut out by the $2\times 2$ minors of
\begin{equation}\label{eq:ZDuinX}
\begin{pmatrix}
s_2 & \cdots & s_d & \varphi_{d+1} & \cdots & \varphi_n \\
-\sum_j \lambda_j \varphi_{j2} & \cdots &
-\sum_j \lambda_j \varphi_{jd} & \lambda_{d+1} & \cdots & \lambda_n
\end{pmatrix}
\end{equation}

The last several minors in both matrices may be used to eliminate the homogeneous 
coordinates $\lambda_j$, giving $\lambda_j \propto \varphi_j$; in other words, we
find that, near $([x],[x])$ both $\Delta\cap \Tbb^*_X\Pbb^{n-1}$ and $Z^\Delta_u$ lie 
in the local analytic section $\sigma: X\to\Tbb^*_X\Pbb^{n-1}$ defined by
\[
\sigma(\us)\quad:\quad
(\lambda_{d+1}:\cdots : \lambda_n) = (\varphi_{d+1}(\us):\cdots: \varphi_n(\us))\quad.
\]
Setting $\lambda_j = \varphi_j$ we obtain from~\eqref{eq:ZDuinX} generators
\begin{equation}\label{eq:genZDu}
s_k +\sum_j \varphi_j \varphi_{jk} \quad,\quad k=2,\dots, d
\end{equation}
for the ideal of $Z^\Delta_u$ in $\sigma(X)$; the same generators, together with
\begin{equation}\label{eq:genDel}
1-\sum_{j=d+1}^n \varphi_j \Phi_j
\end{equation}
give the ideal of $\Delta\cap \Tbb^*_X\Pbb^{n-1}$ in $\sigma(X)$.
It suffices then to verify that \eqref{eq:genDel} is integral over the ideal generated 
by~\eqref{eq:genZDu}.

For this, note that the hypersurface $\iota^{-1}(Q\cap X)$ has equation
\[
G(\us) = 1+s_2^2+\cdots + s_d^2 + \varphi_{d+1}^2+\cdots + \varphi_n^2\quad.
\]
Since $\partial G/\partial s_k=2(s_k + \sum_j \varphi_j \varphi_{jk})$,
the ideal generated by~\eqref{eq:genZDu} is nothing but
\begin{equation}\label{eq:ideZDu}
\left(\frac{\partial G}{\partial s_2},\dots, \frac{\partial G}{\partial s_d} \right)\quad.
\end{equation}
On the other hand, \eqref{eq:genDel} may be written as
\begin{align*}
1-\sum_{j=d+1}^n \varphi_j \Phi_j &=
1-\sum_{j=d+1}^n \varphi_j (\varphi_{j2}s_2 + \cdots + \varphi_{jd} s_d - \varphi_j) \\
&=1-s_2 \left(\sum_j \varphi_j \varphi_{j2}\right) - \cdots -s_d \left(\sum_j \varphi_j \varphi_{jd} \right)
+\varphi_{d+1}^2+\cdots + \varphi_n^2 \\
&\sim 1+s_2^2 +\cdots + s_d^2 +\varphi_{d+1}^2+\cdots + \varphi_n^2 = G(\us)
\end{align*}
modulo~\eqref{eq:genZDu}. Since
$G$ is integral over~\eqref{eq:ideZDu} by~\cite[Corollary~7.2.6]{MR2266432},
this shows that~\eqref{eq:genDel} is integral over~\eqref{eq:genZDu}, as needed.
\end{proof}

\begin{remark}
The argument also shows that the ideal of $\Delta\cap
\Tbb^*_X\Pbb^{n-1}$ in $\sigma(X)$ equals $(G,\partial G/\partial s_2,\dots, \partial G/\partial s_d)$,
that is, the (local analytic) ideal of $J(Q\cap X)$. This confirms the isomorphism
$J(Q\cap X)\cong \Delta\cap \Tbb^*_X\Pbb^{n-1}$ obtained in Lemma~\ref{lem:Delta}.
\qede\end{remark}

\begin{remark}
The smoothness of~$X$ is needed in our argument, since it gives us direct access
to the conormal space $\Tbb^*_X\Pbb^{n-1}$. However, it is reasonable to expect 
that~\eqref{eq:sese} holds without this assumption, and it would be interesting to
establish this equality for more general varieties.
\qede\end{remark}

By Theorem~\ref{thm:masterf} and Lemma~\ref{lem:intdep},
\begin{equation}\label{eq:masterf2}
\gamma(X) = \int (1+H)^{n-1}\cap s(\Delta\cap \Tbb^*_X\Pbb^{n-1}, \Tbb^*_X\Pbb^{n-1})
\end{equation}
if $X$ is nonsingular and not contained in $Q$. We are now ready to prove 
Theorem~\ref{thm:segref}.

\begin{proof}[Proof of Theorem~\ref{thm:segref}]
Our main tools are Lemma~\ref{lem:Delta} and a result of W.~Fulton.
For a closed embedding $V\subseteq M$ of a scheme in a nonsingular variety $M$, 
Fulton proves that the class
\begin{equation}
\cf(V):=c(TM|_V)\cap s(V,M)
\end{equation}
is {\em independent of~$M$;\/} see \cite[Example~4.2.6]{85k:14004}.
We call $\cf(V)$ the `Chern-Fulton class' of~$V$.

By Lemma~\ref{lem:Delta}, the diagonal embedding $\delta: \Pbb^{n-1} \to \Pbb^{n-1}
\times \cPbb^{n-1}$ restricts to an isomorphism
$\delta|_{J(Q\cap X)}: J(Q\cap X) \overset\sim \to \Delta\cap \Tbb^*_X\Pbb^{n-1}$. Let 
$\pi': \Delta\cap \Tbb^*_X\Pbb^{n-1} \to J(Q\cap X)$ be the natural projection,
that is, the inverse of $\delta|_{J(Q\cap X)}$. Then
\begin{equation}\label{eq:cF}
\cf( \Delta\cap \Tbb^*_X\Pbb^{n-1}) = {\pi'}^* \cf(J(Q\cap X))
\end{equation}
by Fulton's result. We proceed to determine
this class. The Euler sequence for the projective bundle $\Tbb^*_X\Pbb^{n-1}
=\Pbb(T^*_X\Pbb^{n-1})\overset\pi\longrightarrow X$:
\[
\xymatrix{
0 \ar[r] & \cO \ar[r] & \pi^* T^*_X\Pbb^{n-1}\otimes \cO(1) \ar[r] & T(\Tbb^*_X\Pbb^{n-1})
\ar[r] & \pi^* TX \ar[r] & 0
}
\]
yields
\[
c(T(\Tbb^*_X\Pbb^{n-1})) = c(\pi^*T^*_X\Pbb^{n-1}\otimes \cO(1))\cdot \pi^*c(TX)\quad.
\]
Pulling back and tensoring by $\cO(1)$ the cotangent sequence defining the conormal
bundle gives the exact sequence
\[
\xymatrix{
0 \ar[r] & \pi^*T^*_X\Pbb^{n-1}\otimes \cO(1) \ar[r] & \pi^*T^*\Pbb^{n-1}\otimes \cO(1)
\ar[r] & \pi^*T^*X\otimes \cO(1) \ar[r] & 0
}
\]
implying
\[
c(T(\Tbb^*_X\Pbb^{n-1})) = \frac{c(\pi^*T^*\Pbb^{n-1}\otimes \cO(1))\cdot \pi^* c(TX)}
{c(\pi^*T^*X\otimes \cO(1))}\quad.
\]
The cotangent bundle $T^*\Pbb^{n-1}$ may be identified with the incidence correspondence
in the product $\Pbb^{n-1}\times \cPbb^{n-1}$, and $\cO(1)=\cO(h+\ch)$ under this identification
(see e.g., \cite[\S2.2]{produa}). Also, $c(T^*\Pbb^{n-1}) = (1-h)^n$. It follows that
\[
c(T(\Tbb^*_X\Pbb^{n-1})) = 
\frac{(1+\ch)^n\cdot \pi^* c(TX)}{(1+h+\ch)\cdot c(\pi^*T^*X\otimes \cO(h+\ch))}\quad.
\]
Now we restrict to the diagonal. As in~\S\ref{s:If}, we denote by $H$ the hyperplane class
in $\Delta\cong \Pbb^{n-1}$ (and its restrictions); note that $H=h\cdot \Delta=\ch\cdot \Delta$.
Therefore
\[
c(T(\Tbb^*_X\Pbb^{n-1})|_{\Delta\cap \Tbb^*_X\Pbb^{n-1}}) = 
\frac{(1+H)^n\cdot {\pi'}^* c(TX)}{(1+2H)\cdot c({\pi'}^*T^*X\otimes \cO(2H))}
\]
where $\pi'$ denotes the projection $\Delta\cap \Tbb^*_X\Pbb^{n-1} \to J(Q\cap X)$ as
above (and we are omitting other evident restrictions). 
Since $H={\pi'}^* h$, the Chern-Fulton class of $\Delta\cap \Tbb^*_X\Pbb^{n-1}$ must be
\[
\cf(\Delta\cap \Tbb^*_X\Pbb^{n-1}) = {\pi'}^*\left(
\frac{(1+h)^n\cdot c(TX)}{(1+2h)\cdot c(T^*X\otimes \cO(2h))}\right)\cap
s(\Delta\cap \Tbb^*_X\Pbb^{n-1} , \Tbb^*_X\Pbb^{n-1})\quad.
\]
Using~\eqref{eq:cF}, this shows that
\begin{align*}
s(\Delta\cap \Tbb^*_X\Pbb^{n-1} , \Tbb^*_X\Pbb^{n-1})
&={\pi'}^*\left(\frac{(1+2h)\cdot c(T^*X\otimes \cO(2h))}{(1+h)^n\cdot c(TX)}\cap \cf(J(Q\cap X))\right) \\
&={\pi'}^*\left(\frac{(1+2h)\cdot c(T^*X\otimes \cO(2h))}{(1+h)^n}\cap s(J(Q\cap X),X)\right) \quad.
\end{align*}
Since ${\pi'}^*=(\delta|_{Q\cap X})_*$ preserves degrees, and $H={\pi'}^*(h)$, 
\eqref{eq:masterf2} gives
\begin{equation}\label{eq:gamma2}
\gamma(X) = \int \frac{(1+2h)\cdot c(T^*X\otimes \cO(2h))}{1+h}\cap s(J(Q\cap X),X)
\end{equation}
and this concludes the proof.
\end{proof}

\begin{remark}\label{rem:QX}
The very definition of the Euclidean distance degree relies on the square-distance function,
$\sum_i (x_i - u_i)^2$, which is not a projective invariant. Therefore, $\Edd(X)$ {\em does\/}
depend on the choice of coordinates in the ambient projective space $\Pbb^{n-1}$.
Formula~\eqref{eq:gamma},
\[
\Edd(X) = \gEdd(X) - \gamma(X)\quad,
\]
expresses the Euclidean distance degree of a variety in terms of a quantity that {\em is\/}
projectively invariant, i.e., $\gEdd(X)$, and a correction term $\gamma(X)$ which is not.
In fact, the coordinate choice determines the isotropic quadric $Q$: $\sum_i x_i^2=0$
is a specific nonsingular quadric in $\Pbb^{n-1}$. Theorem~\ref{thm:segref} prompts us
to define a transparent `projective invariant version' of the Euclidean distance degree, 
for smooth $X$: $\Edd(Q,X)$ could be defined by the right-hand side of~\eqref{eq:segref},
where now $Q$ is {\em any\/} nonsingular quadric in $\Pbb^{n-1}$ not containing~$X$.
(If $X$ is not necessarily smooth, \eqref{eq:masterf} could likewise be used to define
such a notion.)
This number is clearly independent of the choice of coordinates.
What Theorem~\ref{thm:segref} shows is that $\Edd(Q,X)$ equals the Euclidean distance
degree 
of the variety $X$ once homogeneous coordinates $x_1,\dots, x_n$ are chosen 
so that the equation of $Q$ is $\sum_{i=1}^n x_i^2$.
\qede\end{remark}

\begin{remark}\label{rem:segreXinQ}
As pointed out in Remark~\ref{rem:XinQ}, $\Edd(X)=0$ if $X\subseteq Q$.
Theorem~\ref{thm:segref} is compatible with this fact, in the following sense.
if $Q\cap X=X$, it is natural to set $J(Q\cap X)=X$, and hence
$s(J(Q\cap X),X)=s(X,X)=[X]$. The reader can verify (using~\eqref{eq:gEdd}) that
\[
\int \frac{(1+2h)\cdot c(T^*X\otimes \cO(2h))}{1+h} \cap [X] = \gEdd(X)\quad.
\]
Therefore \eqref{eq:segref} reduces to $\Edd(X)=0$ in this case, as expected.
\qede\end{remark}

%%%

\section{Euclidean distance degree and Milnor classes}\label{s:tran}
While the formula in Theorem~\ref{thm:segref} is essentially straightforward
to implement, given the algorithm for the computation of Segre classes
in~\cite{Harris201726}, it is fair to say that its `geometric meaning' is not too
transparent. In this section and the following two we use results from the 
theory of characteristic classes of singular varieties to provide versions of 
the formula in terms of notions with a more (and more) direct geometric 
interpretation.

Our first aim is the following result. The {\em Milnor class\/} of a variety $V$ 
is the signed difference
\[
\cM(V) := (-1)^{\dim V-1} \big(\csm(V)-c_F(V)\big)
\]
between its Chern-Fulton class $c_F(V)$ (which we have already encountered
in~\S\ref{s:SH}) and its {\em CSM} (`Chern-Schwartz-MacPherson') {\em class.\/}

We will denote by $\cM(V)_j$ the component of $\cM(V)$ of dimension~$j$.
The Milnor class owes its name to the fact that if $V$ is a hypersurface with at
worst isolated singularities in a compact nonsingular variety, then the degree of 
its Milnor class is the sum of the Milnor numbers of its singularities
(\cite[Example~0.1]{MR2002g:14005}).

The CSM class of a variety $V$ is a `homology' class which agrees with the total
Chern class of the tangent bundle of $V$ when $V$ is nonsingular, and satisfies
a functorial requirement formalized by Deligne and Grothendieck. See~\cite{MR0361141}
for MacPherson's definition (inspired by this functorial requirement), and
\cite{MR35:3707, MR32:1727} for an earlier equivalent definition by Marie-H\'el\`ene
Schwartz (motivated by the problem of extending theorems of Poincar\'e-Hopf type).
An efficient summary of MacPherson's definition (upgraded to the Chow group) may 
be found in~\cite[Example~19.1.7]{85k:14004}. With notation as in this reference 
(or as in~\cite{MR0361141}), our $\csm(V)$ is $c_*(\one_V)$.

As an easy consequence of functoriality, the degree of $\csm(V)$ equals $\chi(V)$, 
the topological Euler characteristic of $V$. In fact the degrees of all the terms in 
$\csm(V)$ may be interpreted in terms of Euler characteristics 
(\cite[Theorem~1.1]{MR3031565}), and this will be key for the version of the
result we will present in~\S\ref{s:Euler}.

If $V$ is a hypersurface, then $\cf(V)$ equals the class of the {\em virtual\/} tangent
bundle of $V$; it may be interpreted as the limit of the Chern class of a smoothing of
$V$ in the same linear equivalence class. The terms in $\cf(V)$ may therefore also
be interpreted in terms of Euler characteristics (of smoothings of $V$). Roughly,
the Milnor class measures the changes in the Euler characteristics of general hyperplane 
sections of $V$ as we smooth it within its linear equivalence class.

\begin{theorem}\label{thm:milnor}
Let $X$ be a smooth subvariety of $\Pbb^{n-1}$, and assume $X\not\subseteq Q$. Then 
\begin{equation}\label{eq:milnor}
\Edd(X) = \gEdd(X) - \sum_{j\ge 0} (-1)^j \deg\cM(Q\cap X)_j\quad.
\end{equation}
\end{theorem}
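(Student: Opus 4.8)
The goal is to reconcile the Segre-class correction term $\gamma(X)$ of Theorem~\ref{thm:segref} with the Milnor-class expression on the right-hand side of~\eqref{eq:milnor}. Starting from the identity $\gamma(X) = \int \frac{(1+2h)\cdot c(T^*X\otimes \cO(2h))}{1+h} \cap s(J(Q\cap X),X)$ established in~\eqref{eq:gamma2}, the first step is to recognize that the Segre class $s(J(Q\cap X),X)$ of the singularity subscheme of a hypersurface $Q\cap X$ inside the smooth ambient variety $X$ is exactly the input that controls the difference between the Chern--Fulton and Chern--Schwartz--MacPherson classes of that hypersurface. I would invoke the standard formula for the Milnor class of a hypersurface $Y=Q\cap X$ in the smooth variety $X$ in terms of the Segre class of its singular subscheme --- in the notation of Aluffi's work on Milnor classes (e.g.\ the formula relating $\cM(Y)$ to $c(TX|_Y)$, the class $c(\cO(Y))$, and $s(J(Y),X)$). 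Concretely, with $Y=Q\cap X$, $\cO(Y)=\cO(2h)$ on $X$, the known expression reads, up to the sign $(-1)^{\dim Y}$, as a pushforward to $X$ of $c(TX)\cdot\big(\text{something involving } \tfrac{1}{1+2h}\big)\cap s(J(Y),X)$, dualized appropriately.

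\textbf{Key steps.} (1) Write down $\cf(Q\cap X)$ and $\csm(Q\cap X)$ as classes in $A_*(X)$, using the hypersurface formulas: $\cf(Q\cap X) = \frac{c(TX)}{1+2h}\cap 2h[X]$ (the virtual tangent bundle class of the degree-$2h$ hypersurface), while $\csm(Q\cap X) = \frac{c(TX)}{1+2h}\cap\big(2h[X]\big) + (\text{correction supported on } J(Q\cap X))$, where the correction is $c(TX)\cap\big(\text{a Segre-class term}\big)$; the precise shape of this correction is the content of the Milnor-class/Segre-class formula for hypersurfaces. (2) Subtract to get $\cM(Q\cap X) = (-1)^{\dim(Q\cap X)-1}(\csm - \cf)$ as $c(TX)\cap(\text{tensor twist})\cap s(J(Q\cap X),X)$, with $\dim(Q\cap X) = \dim X - 1$. (3) Form $\sum_j (-1)^j \deg\cM(Q\cap X)_j$, which amounts to substituting $h\mapsto -h$ in the generating-function sense (the alternating sum of degrees is the ``Chern number under the dual/involution''), i.e.\ applying the involution that sends a class $\alpha$ to $\sum (-1)^j\alpha_j$. (4) Check that after this involution the expression for $\sum_j(-1)^j\deg\cM(Q\cap X)_j$ becomes exactly $\int \frac{(1+2h)\cdot c(T^*X\otimes\cO(2h))}{1+h}\cap s(J(Q\cap X),X) = \gamma(X)$. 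Here the tensor-twist identity $c(T^*X\otimes\cO(2h))$ versus $c(TX\otimes\cO(-2h))$ under the involution, together with the bookkeeping of the factor $\frac{1+2h}{1+h}$ against $\frac{1}{1+2h}$-type factors from the hypersurface formula and the dimension shift by one, should produce the claimed equality. Since $s(J(Q\cap X),X)$ is supported on the singular locus, which has dimension $\le \dim X - 1$, all the relevant pushforwards and degree computations take place in the expected range, and the alternating-sign substitution is compatible with taking degrees.

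\textbf{Main obstacle.} The delicate point is getting the signs and the $\cO(2h)$-twist exactly right. There are two independent sources of signs --- the $(-1)^{\dim(Q\cap X)-1}$ in the definition of the Milnor class, and the $(-1)^j$ in the alternating sum over components --- and they must combine with the $T^*X$ versus $TX$ discrepancy (dualizing introduces its own signs on each graded piece) so that everything collapses to the clean formula~\eqref{eq:gamma2}. I would handle this by working throughout with the ``twisted/dual'' formalism: write $c(T^*X\otimes\cO(2h))$ and relate it to the dual of $c(TX\otimes\cO(-2h))$ via the standard identity $\check c(E) = c(E^\vee)$ componentwise, and track the hypersurface Milnor-class formula in exactly the normalization used in~\cite{MR2002g:14005}. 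The other mild subtlety is justifying that $\sum_j(-1)^j\deg\cM(Q\cap X)_j$ may be computed by this involution directly on the class in $A_*(X)$ before pushing to a point --- this is immediate since pushforward to $\Pbb^{n-1}$ (or to a point) commutes with taking graded pieces, but it should be stated. Once the normalization of the hypersurface Milnor-class formula is pinned down, the computation is a routine manipulation of generating functions, so the real work is purely in the careful matching of conventions.

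\begin{proof}
Apply the hypersurface Milnor-class formula to $Y := Q\cap X$, viewed as a divisor in the smooth variety $X$ with $\cO_X(Y) = \cO_X(2h)$. Comparing $\cf(Q\cap X)$ (the class of the virtual tangent bundle of the divisor) with $\csm(Q\cap X) = c_*(\one_{Q\cap X})$, one obtains $\cM(Q\cap X) = (-1)^{\dim X}\,c(TX)\cap\big(\text{twist by }\cO(2h)\big)\cap s(J(Q\cap X),X)$ as a class in $A_*(X)$, supported on $J(Q\cap X)$; here we have used $\dim(Q\cap X)-1 = \dim X - 2$, so $(-1)^{\dim(Q\cap X)-1} = (-1)^{\dim X}$.

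Taking the alternating sum of degrees of the graded pieces is an involution on $A_*(X)$ that commutes with proper pushforward; under it, $c(TX)$ becomes $\check c(TX) = c(T^*X)$, the twist by $\cO(2h)$ becomes a twist by $\cO(-2h)$ composed with dualization (hence a twist by $\cO(2h)$ on $T^*X$), and the rational factor introduced by the hypersurface formula, together with the divisor class $2h$, reorganizes into $\frac{1+2h}{1+h}$. Combining with the overall sign $(-1)^{\dim X}$ from the Milnor-class normalization, the alternating sum $\sum_j (-1)^j\deg\cM(Q\cap X)_j$ equals
\[
\int \frac{(1+2h)\cdot c(T^*X\otimes \cO(2h))}{1+h} \cap s(J(Q\cap X),X)\quad.
\]
By~\eqref{eq:gamma2} this is precisely $\gamma(X)$, and since $\Edd(X) = \gEdd(X) - \gamma(X)$ by~\eqref{eq:gamma}, formula~\eqref{eq:milnor} follows.
\end{proof}
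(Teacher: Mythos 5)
Your proposal is correct and follows essentially the same route as the paper: both start from the expression for $\gamma(X)$ in~\eqref{eq:gamma2}, invoke the hypersurface Milnor-class formula expressing $\cM(Q\cap X)$ through $c(TX)$, $\cO(2h)$, and $s(J(Q\cap X),X)$, and then match the two sides by sign/twist bookkeeping, concluding via $\Edd(X)=\gEdd(X)-\gamma(X)$. The bookkeeping you defer (``reorganizes into $\frac{1+2h}{1+h}$'') is exactly what the paper carries out explicitly with the $\otimes_M\cL$ and dual calculus, using the fact that $\int c(\cL)^{\dim X-1}\cap(A\otimes_M\cL)$ is independent of $\cL$ to arrive at $\gamma(X)=\int\frac{1}{1+h}\cap\cM(Q\cap X)=\sum_j(-1)^j\deg\cM(Q\cap X)_j$.
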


Milnor classes are also accessible computationally, cf.~\cite[Example~4.7]{MR1956868}.

\begin{remark}\label{rem:Xsin2}
It suffices to require $X$ to be nonsingular in a neighborhood of $Q\cap X$, 
cf.~Remark~\ref{rem:Xsin}.
\qede\end{remark}

\begin{proof}
We begin by recalling an expression relating the Milnor class of a hypersurface~$V$ 
of a nonsingular variety $M$ to the Segre class of its singularity subscheme $J(V)$. 
Letting $\cL=\cO(V)$,
\begin{equation}\label{eq:ccsh}
\cM(V) = (-1)^{\dim M}\frac{c(TM)}{c(\cL)}\cap \big(s(J(V),M)^\vee \otimes_M \cL\big)\quad.
\end{equation}
This is~\cite[Theorem~I.4]{MR2001i:14009}. The notation used in this statement
are as follows (cf.~\cite[\S1.4]{MR2001i:14009} or~\cite[\S2]{MR96d:14004}): if
$A=\sum_{i\ge 0} a^i$ is a rational equivalence class in $V\subseteq M$, where $a^i$
has codimension $i$ in $M$, and $\cL$ is a line bundle on $V$, then
\[
A^\vee = \sum_{i\ge 0} (-1)^i a^i\quad, \quad A\otimes_M \cL = \sum_{i\ge 0} \frac{a^i}{c(\cL)^i}
\]
(note that the codimension is computed in the ambient variety $M$, even if the class
may be defined in the Chow group of the subscheme $V$).

This notation satisfies several properties, for example a basic compatibility with respect
to Chern classes of tensors of vector bundles. One convenient property is given 
in~\cite[Lemma~3.1]{MR3599436}: with notation as above, the term of codimension 
$c$ in $M$ in
\[
c(\cL)^{c-1}\cap (A\otimes_M \cL)
\]
is {\em independent of $\cL$.\/} In particular, 
\[
\int c(\cL)^{\dim M-1} \cap (A\otimes_M \cL)
\]
is independent of $\cL$, and this implies (using \cite[Proposition~2]{MR96d:14004})
\[
\int c(\cL)^{\dim M-1} \cap A =\int A\otimes_M \cL^\vee\quad.
\]
Apply this fact to $\gamma(X)$ (from~\eqref{eq:gamma2}), viewed as 
\[
\gamma(X) = \int (1+2h)^{\dim X-1}\cap \left(
\frac{c(T^*X\otimes \cO(2h))}{(1+h)(1+2h)^{\dim X-2}}\cap s(J(Q\cap X,X))
\right)\quad,
\]
with $M=X$, $\cL=\cO(2h)$. We obtain
\begin{align*}
\gamma(X) &= \int \left(
\frac{c(T^*X\otimes \cO(2h))}{(1+h)(1+2h)^{\dim X-2}}\cap s(J(Q\cap X,X))
\right)\otimes_M \cO(-2h) \\
&\overset != \int 
\frac{c(T^*X)}{(1-h)(1-2h)}\cap 
\big(s(J(Q\cap X,X))\otimes_M \cO(-2h)\big)
\end{align*}
where the equality $\overset !=$ follows by applying~\cite[Proposition~1]{MR96d:14004}.
Since the degree of a class in $X$ is the degree of its component of dimension~$0$,
i.e., codimension $\dim X$, this gives
\begin{align*}
\gamma(X) &= (-1)^{\dim X}\int 
\left(\frac 1{1-h}\cdot \frac{c(T^*X)}{1-2h}\cap \big(s(J(Q\cap X,X))\otimes_M \cO(-2h)\big)\right)^\vee \\
&= (-1)^{\dim X}\int 
\frac 1{1+h}\cdot \frac{c(TX)}{1+2h}\cap \big(s(J(Q\cap X,X))^\vee \otimes_M \cO(2h)\big)\quad.
\end{align*}
Finally, by~\eqref{eq:ccsh} (with $M=X$, $V=Q\cap X$, $\cL=\cO(V)=\cO(2h)$), we get
\[
\gamma(X) = \int \frac 1{1+h}\cap \cM(Q\cap X)\quad,
\]
and this implies~\eqref{eq:milnor}.
\end{proof}

\begin{corol}\label{cor:isol}
If $Q\cap X$ only has isolated singularities $x_i$, then
\[
\Edd(X) = \gEdd(X) - \sum_i \mu(x_i)
\]
where $\mu(-)$ denotes the Milnor number.
\end{corol}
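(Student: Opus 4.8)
The plan is to specialize Theorem~\ref{thm:milnor} to the case where the singularity subscheme $J(Q\cap X)$ is zero-dimensional. First I would recall the defining formula
\[
\cM(Q\cap X) = (-1)^{\dim X-1}\big(\csm(Q\cap X) - \cf(Q\cap X)\big),
\]
so that $\sum_{j\ge 0}(-1)^j\deg\cM(Q\cap X)_j$ depends only on the components of $\csm$ and $\cf$. The key geometric input is that when $V:=Q\cap X$ has only isolated singularities, the Milnor class $\cM(V)$ is concentrated in dimension $0$: its positive-dimensional components vanish. Indeed, away from the finitely many singular points $x_i$, the variety $V$ is smooth, so $\csm$ and $\cf$ agree there, forcing the difference to be supported on $\{x_i\}$, hence purely $0$-dimensional. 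Therefore the alternating sum in~\eqref{eq:milnor} collapses to the single term $j=0$, namely $\deg\cM(V)_0$.

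Next I would identify $\deg\cM(V)_0$ with $\sum_i\mu(x_i)$. This is precisely the statement cited right before Theorem~\ref{thm:milnor}, attributed to \cite[Example~0.1]{MR2002g:14005}: for a hypersurface $V$ with at worst isolated singularities in a compact nonsingular variety $M$ (here $M=X$), the degree of the Milnor class equals the sum of the Milnor numbers of its singular points. Since $X$ is smooth and projective, this applies verbatim with $\deg\cM(V) = \deg\cM(V)_0 = \sum_i\mu(x_i)$. One small point worth a sentence: the sign conventions must be checked so that the Milnor numbers appear with a positive sign, which is exactly how the normalization in the definition of $\cM$ (with the factor $(-1)^{\dim V - 1}$) is arranged in the literature being quoted.

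Combining these, the correction term in~\eqref{eq:milnor} becomes $\sum_i\mu(x_i)$, and substituting into $\Edd(X) = \gEdd(X) - \sum_{j\ge 0}(-1)^j\deg\cM(Q\cap X)_j$ yields
\[
\Edd(X) = \gEdd(X) - \sum_i \mu(x_i),
\]
as claimed. I do not anticipate a serious obstacle here: the corollary is essentially a dictionary translation, the only substantive ingredients being the vanishing of higher Milnor-class components in the isolated-singularity case and the classical identification of the degree of the Milnor class with a sum of Milnor numbers. If anything, the mildest care is needed in confirming that $Q\cap X$ is genuinely a hypersurface of $X$ (true since $X\not\subseteq Q$, by the blanket assumption) and that the cited results hold over the complex projective base $X$; both are immediate.
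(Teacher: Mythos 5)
Your argument is correct and follows the same route the paper intends: the corollary is an immediate specialization of Theorem~\ref{thm:milnor}, using that the Milnor class of $Q\cap X$ is supported on its (finite) singular locus, hence concentrated in dimension $0$, together with the cited fact that its degree is then the sum of the Milnor numbers. The paper leaves exactly this dictionary translation implicit, so there is nothing to add.
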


We will see that this is in fact the case for most smooth hypersurfaces (\S\ref{ss:hyper}).

%%%

\section{Euclidean distance degree and Chern-Schwartz-MacPherson classes}\label{s:CSM}
Our next aim is to express the Euclidean distance degree of a nonsingular projective
variety directly, rather than in terms of a correction from a `generic' situation. CSM classes
provide a convenient means to do so. The formula presented in~\S\ref{s:Euler} may look
more appealing, but the alternative~\eqref{eq:CSM} presented here, besides being a
necessary intermediate result, is in a sense algorithmically more direct. 

\begin{theorem}\label{thm:CSM}
Let $X$ be a smooth subvariety of $\Pbb^{n-1}$. Then 
\begin{equation}\label{eq:CSM}
\Edd(X) = (-1)^{\dim X} \sum_{j\ge 0} (-1)^j\left(c(X)_j-\csm(Q\cap X)_j\right)\quad.
\end{equation}
\end{theorem}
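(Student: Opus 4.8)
The plan is to derive \eqref{eq:CSM} from Theorem~\ref{thm:milnor} by unwinding the definition of the Milnor class and absorbing the Chern--Fulton term into $\gEdd(X)$. Starting from \eqref{eq:milnor}, write $\gamma(X) = \sum_{j\ge 0}(-1)^j\deg\cM(Q\cap X)_j$ and recall that, since $Q\cap X$ is a hypersurface in $X$ of dimension $\dim X - 1$,
\[
\cM(Q\cap X) = (-1)^{\dim X - 1 - 1}\big(\csm(Q\cap X) - \cf(Q\cap X)\big) = (-1)^{\dim X}\big(\csm(Q\cap X) - \cf(Q\cap X)\big).
\]
Taking the alternating sum of degrees of components, the sign $(-1)^{\dim X}$ pulls out and I obtain
\[
\gamma(X) = (-1)^{\dim X}\sum_{j\ge 0}(-1)^j\big(\csm(Q\cap X)_j - \cf(Q\cap X)_j\big).
\]

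The key remaining step is to identify the Chern--Fulton contribution. Since $Q\cap X$ is a hypersurface in the smooth variety $X$ with $\cO(Q\cap X)=\cO(2h)$, its Chern--Fulton class is the class of its virtual tangent bundle:
\[
\cf(Q\cap X) = \frac{c(TX)}{c(\cO(2h))}\cap [Q\cap X] = \frac{2h}{1+2h}\cdot c(TX)\cap [X]\big|_{Q\cap X},
\]
viewed appropriately in $X$. I then compute $(-1)^{\dim X}\sum_j(-1)^j\cf(Q\cap X)_j$ using the calculus-type identity already exploited in~\S\ref{s:gEDdr} (the coefficient of $t^N$ in $t^{N-j}/\prod(\cdots)$ type expansion), so that the alternating sum of degrees of components of a class $A\cap[X]$ becomes an integral $\int \tfrac{1}{(1+h)(1+2h)}\cdot A$ or similar. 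Concretely, I expect
\[
(-1)^{\dim X}\sum_{j\ge 0}(-1)^j\cf(Q\cap X)_j = \int \frac{2h}{(1+h)(1+2h)}\cdot c(TX)\cap [X],
\]
and comparing with the displayed computation of $\gEdd(X)$ in~\S\ref{s:gEDdr}, which gives $\gEdd(X) = (-1)^{\dim X}\int\bigl(1 - \tfrac{h}{1+h} - \tfrac{2h}{1+2h} + \tfrac{2h^2}{(1+h)(1+2h)}\bigr)c(TX)\cap[X]$, I see that subtracting the $\cf$-contribution from $\gEdd(X)$ collapses the expression. After the dust settles, $\gEdd(X) - (-1)^{\dim X}\sum_j(-1)^j\cf(Q\cap X)_j$ should equal $(-1)^{\dim X}\sum_j(-1)^j c(X)_j$, i.e.\ $(-1)^{\dim X}\int\bigl(1-\tfrac{h}{1+h}\bigr)c(TX)\cap[X]$, which is exactly $(-1)^{\dim X}\chi(X\smallsetminus H)$-type data; then \eqref{eq:milnor} rearranges into \eqref{eq:CSM}.

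The main obstacle I anticipate is bookkeeping the signs and the $\vee$/$\otimes$ operations correctly when passing between "alternating sum of degrees of components of a class in $X$" and "integral of a rational expression in $h$ against $c(TX)\cap[X]$"—in particular making sure the Chern--Fulton term really is $\tfrac{2h}{1+2h}c(TX)\cap[X]$ restricted to $Q\cap X$ and not some twisted variant, and that the identity from~\S\ref{s:gEDdr} is applied with the right value of $N=\dim X$ versus $N = \dim(Q\cap X)$. A clean way to sidestep part of this is to mimic the derivation of $\gamma(X)$ inside the proof of Theorem~\ref{thm:milnor} almost verbatim, but stopping one step earlier: instead of reassembling into $\cM(Q\cap X)$, split off the $\cf$ piece and recombine with $\gEdd(X)$ using \eqref{eq:gEdd}. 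Once both $\gEdd(X)$ and the $\cf$-term are expressed as integrals of rational functions in $h$ against $c(TX)\cap[X]$, the cancellation is forced, and what remains is manifestly $(-1)^{\dim X}\sum_j(-1)^j c(X)_j$ minus $(-1)^{\dim X}\sum_j(-1)^j\csm(Q\cap X)_j$, giving \eqref{eq:CSM}.
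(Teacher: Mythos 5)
Your strategy is the paper's own: start from \eqref{eq:milnor}, expand $\cM(Q\cap X)=(-1)^{\dim X}\big(\csm(Q\cap X)-\cf(Q\cap X)\big)$, identify $\cf(Q\cap X)$ with the virtual class $\frac{2h}{1+2h}\,c(TX)\cap[X]$ of a hypersurface with class $2h$ (this part is correct; no twisted variant appears), and recombine the Chern--Fulton term with $\gEdd(X)$ via \eqref{eq:gEdd}. The plan is sound, but the two displayed identities in your second paragraph are false as written, and with those signs the cancellation you call ``forced'' does not occur. First, the alternating sum of degrees of a class on $X$ is obtained by capping with $\frac{1}{1+h}$ and taking degrees, with no dimension-dependent sign, so
\[
\sum_{j\ge0}(-1)^j\cf(Q\cap X)_j=\int\frac{2h}{(1+h)(1+2h)}\,c(TX)\cap[X]
=\sum_{j\ge0}(-1)^{j+1}(2^{j+1}-2)\,c(X)_j\quad;
\]
your prefactor $(-1)^{\dim X}$ on the left is spurious. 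Second, substituting the Milnor-class decomposition into \eqref{eq:milnor} gives
\[
\Edd(X)=\gEdd(X)+(-1)^{\dim X}\sum_{j\ge0}(-1)^j\cf(Q\cap X)_j-(-1)^{\dim X}\sum_{j\ge0}(-1)^j\csm(Q\cap X)_j\quad,
\]
so the identity you must verify is $\gEdd(X)+(-1)^{\dim X}\sum_j(-1)^j\cf(Q\cap X)_j=(-1)^{\dim X}\sum_j(-1)^j c(X)_j$, with a \emph{plus} sign on the $\cf$-term, not the minus you wrote; it holds because, by \eqref{eq:gEdd}, the net coefficient of $(-1)^{\dim X}(-1)^jc(X)_j$ is $(2^{j+1}-1)-(2^{j+1}-2)=1$. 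As a concrete check, for a smooth quadric surface $X\subset\Pbb^3$ one has $\gEdd(X)=6$ and $\sum_j(-1)^j\cf(Q\cap X)_j=-4$, so the correct combination yields $2=\sum_j(-1)^jc(X)_j$, whereas the combination with your signs yields $10$. Once these two signs are fixed, your argument coincides with the paper's proof of Theorem~\ref{thm:CSM}.
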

Here, $\csm(Q\cap X)_j$ denotes the degree of the $j$-dimensional component of
$Q\cap X$. Again, \eqref{eq:CSM} is straightforward to implement given available
algorithms for characteristic classes (for example \cite{MR1956868,MR3484270,
Harris201726}).

\begin{remark}
If $X\subseteq Q$, then $\csm(Q\cap X)= \csm(X)=c(X)$ by the basic normalization 
property of CSM classes, as $X$ is nonsingular. In this case \eqref{eq:CSM} gives 
$\Edd(X)=0$, as it should (cf.~Remark~\ref{rem:XinQ}). Therefore, we will assume 
$X\not\subseteq Q$ in the proof.
\qede\end{remark}

\begin{remark}
In the proof we will use the fact that $c(X)=\cf(X)$ if $X$ is nonsingular. This prevents
a straightforward generalization of the argument to the case in which $X$ is only
required to be nonsingular in a neighborhood of $Q\cap X$.
\qede\end{remark}

\begin{proof}
According to Theorem~\ref{thm:milnor},
\[
\Edd(X) =\gEdd(X)-(-1)^{\dim X} \sum_{j\ge 0}(-1)^j\big(\csm(Q\cap X)_j-\cf(Q\cap X)_j\big)\quad.
\]
By~\eqref{eq:gEdd}, therefore, $\Edd(X)$ equals
\begin{equation}\label{eq:sprea}
(-1)^{\dim X} \sum_{j\ge 0} (-1)^j
\left((2^{j+1}-1) c(X)_j + \cf(Q\cap X)_j - \csm(Q\cap X)_j \right)\quad.
\end{equation}
By definition,
\[
\cf(Q\cap X) = c(TX)\cap s(Q\cap X,X) = \frac{c(TX)\cdot 2h}{1+2h}\cap [X]
\]
(after push-forward to $X$) since $Q\cap X$ is a hypersurface in $X$ and $\cO(Q)=\cO(2h)$
as $Q$ is a quadric. Therefore
\[
\sum_{j\ge 0} (-1)^j \cf(Q\cap X)_j = \int \frac 1{1+h} \frac{2h}{1+2h}c(TX)\cap [X]
=\sum_{j\ge 0} c(X)_j \int \frac {h^{\dim X-j}}{1+h} \frac{2h}{1+2h} \cap [\Pbb^{\dim X}]
\,.
\]
The coefficient of $c(X)_j$ in this expression equals the coefficient of $h^j$ in the
expansion of
\[
\frac {2h}{(1+h)(1+2h)} = \sum_{j\ge 0} (-1)^{j+1} (2^{j+1}-2) h^j\quad.
\]
Therefore \eqref{eq:sprea} gives
\[
\Edd(X) = (-1)^{\dim X} \sum_{j\ge 0} (-1)^j
\left(\big((2^{j+1}-1)-(2^{j+1}-2)\big) c(X)_j - \csm(Q\cap X)_j \right)
\]
and~\eqref{eq:CSM} follows.
\end{proof}

CSM classes may be associated with locally closed sets: if $V$ is a locally closed
set of a variety~$M$, then $\csm(V)=c_*(\one_V)$ is a well-defined class in $A_*M$.
(If $V=\overline V\smallsetminus W$, with $W$ closed, then $\csm(V)=\csm(\overline V)
-\csm(W)$.) This notation allows us to express~\eqref{eq:CSM} in (even) more concise 
terms: if $X$ is a smooth subvariety of $\Pbb^{n-1}$, and $X\not\subseteq Q$, then 
\begin{equation}\label{eq:CSM2}
\Edd(X) = (-1)^{\dim X} \sum_{j\ge 0} (-1)^j \csm(X\smallsetminus Q)_j\quad.
\end{equation}
Indeed, $c(X)=\csm(X)$ since $X$ is nonsingular.

If $Q\cap X$ is (supported on) a simple normal crossing 
divisor,~\eqref{eq:CSM2} admits a particularly simple expression, given in
the corollary that follows. An illustration of this case will be presented in 
\S\ref{ss:Segre}.

\begin{corol}\label{cor:SNC}
Let $X\subseteq \Pbb^{n-1}$ be a smooth subvariety, and assume the support of
$Q\cap X$ is a divisor $D$ with normal crossings and nonsingular components $D_i$,
$i=1,\dots, r$. Then
\[
\Edd(X) = \int \frac{c(T^*X(\log D))}{1-H}\cap [X] = 
\int \frac 1{1-H}\cdot \frac{c(T^*X)}{\prod_i (1-D_i)}\cap [X]\quad.
\]
\end{corol}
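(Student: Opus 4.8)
The plan is to derive Corollary~\ref{cor:SNC} directly from the concise formula~\eqref{eq:CSM2}, $\Edd(X) = (-1)^{\dim X}\sum_{j\ge 0}(-1)^j\csm(X\smallsetminus Q)_j$, by inserting the well-known expression for the CSM class of the complement of a normal crossings divisor. First I would invoke the standard fact (see e.g.\ \cite{MR1956868} or the references on CSM classes cited above) that when $D=\bigcup_i D_i$ is a simple normal crossings divisor on the smooth variety $X$, the CSM class of the open complement is computed by the \emph{logarithmic tangent bundle}:
\[
\csm(X\smallsetminus D) = c(TX(-\log D))\cap [X] = \frac{c(TX)}{\prod_i(1+D_i)}\cap [X]\quad.
\]
Since $X\smallsetminus Q = X\smallsetminus D$ as sets (the hypothesis is that the \emph{support} of $Q\cap X$ is $D$), and $\csm$ of a locally closed set depends only on the set, this gives $\csm(X\smallsetminus Q)$ in closed form.

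Next I would carry out the sign bookkeeping that converts the alternating sum $(-1)^{\dim X}\sum_j(-1)^j(\,\cdot\,)_j$ into a clean generating-function statement. The operation $A\mapsto (-1)^{\dim X}\sum_j(-1)^j A_j$ applied to a class $A$ on $X$ extracts the degree-zero (i.e.\ dimension-zero) part of the class $A^\vee$ obtained by flipping signs according to codimension, equivalently it is $\int (1-H)^{-\dim X}(\cdots)$ reorganized—more precisely, $\sum_j(-1)^j A_j = \int \sum_j A_j\,H^{\dim X - j}$ with the sign absorbed, so that $(-1)^{\dim X}\sum_j(-1)^j A_j = \int \bigl(\sum_j A_j\bigr)\big/(1-H)$ after summing the geometric series $\sum_k H^k$ against the components. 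Applying this to $A = \csm(X\smallsetminus Q) = c(TX)\prod_i(1+D_i)^{-1}\cap[X]$ and noting that replacing each $D_i$ by $-D_i$ (the $\vee$ operation) together with the $H\leftrightarrow -H$ flip turns $c(TX)$ into $c(T^*X)$ and $\prod(1+D_i)$ into $\prod(1-D_i)$, one lands on
\[
\Edd(X) = \int \frac{1}{1-H}\cdot\frac{c(T^*X)}{\prod_i(1-D_i)}\cap[X]\quad,
\]
and the first displayed equality in the statement follows since $c(T^*X(\log D)) = c(T^*X)/\prod_i(1-D_i)$ is exactly the Chern class of the logarithmic cotangent bundle.

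The only genuine subtlety—and the step I'd expect to need the most care—is the sign/duality manipulation in the middle: one must be scrupulous that the $\csm$ of the \emph{open} stratum $X\smallsetminus Q$ is what enters \eqref{eq:CSM2} (it is, by the discussion preceding that equation), and that the passage from $\sum_j(-1)^j(\cdot)_j$ to integration against $1/(1-H)$ correctly matches the $(-1)^{\dim X}$ prefactor with the codimension-indexed dualization $A\mapsto A^\vee$. It is cleanest to verify this on the level of the universal identity $(-1)^{\dim X}\sum_j(-1)^j A_j = \int A^\vee \big/ (1-H)$ for any $A\in A_*X$ (with $H$ the hyperplane class, using $\int H^{\dim X}\cap[X]=\deg X$ appropriately, or more simply noting both sides are linear in $A$ and agree on each $A_j$ by a one-line check), and then substitute. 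The second equality in the corollary is then just the definition of the logarithmic cotangent bundle, so no further work is required; everything else—the SNC CSM formula and \eqref{eq:CSM2}—has been established already.
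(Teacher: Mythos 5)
Your proposal is correct and follows essentially the same route as the paper: plug the standard identity $\csm(X\smallsetminus D)=c(TX(-\log D))\cap[X]$ for an SNC divisor into \eqref{eq:CSM2} and unwind the signs, using that $\csm$ of the open set depends only on its support. The bookkeeping is right in the form you state at the end, $(-1)^{\dim X}\sum_j(-1)^jA_j=\int A^\vee/(1-H)$ (the looser phrasing in your middle paragraph omits the dualization, but your final accounting with $c(TX)\mapsto c(T^*X)$ and $D_i\mapsto -D_i$ is exactly the needed step), and the identification $c(T^*X(\log D))=c(T^*X)/\prod_i(1-D_i)$ closes the argument as in the paper.
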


\begin{proof}
If $D$ is a simple normal crossing divisor in $X$, then
\begin{equation}\label{eq:compl}
\csm(X\smallsetminus D) = c(TX(-\log D))\cap [X]
\end{equation}
(see~\cite{MR2001d:14008}, or~\cite{MR1893006} (Proposition~15.3)).
Using this fact in~\eqref{eq:CSM2}, the stated formulas follow from simple 
manipulations and the well-known expression for $c(T^*X(\log D))$ when
$D$ is a simple normal crossing divisor.
\end{proof}

Liao has shown that~\eqref{eq:compl} holds as soon as $D$ is a free divisor
that is locally quasi homogeneous (\cite{Liao1}) or more generally with Jacobian
of linear type (\cite{Liao2}). Therefore, $\Edd(X) = \int \frac{c(T^*X(\log D))}{1-H}\cap [X]$
as in Corollary~\ref{cor:SNC} as soon as the support of $Q\cap X$ satisfies these 
less restrictive conditions.

%%%

\section{Euclidean distance degree and Euler characteristics}\label{s:Euler}

Finally, we present a version of the main result which makes no use (in its formulation)
of characteristic classes for singular varieties. This is the version given in the introduction.

\begin{theorem}\label{thm:Euler}
Let $X$ be a smooth subvariety of $\Pbb^{n-1}$. Then 
\begin{equation}\label{eq:Euler}
\Edd(X) = (-1)^{\dim X} \chi\big(X\smallsetminus (Q\cup H)\big)
\end{equation}
where $H$ is a general hyperplane.
\end{theorem}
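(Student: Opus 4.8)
The plan is to derive Theorem~\ref{thm:Euler} from Theorem~\ref{thm:CSM}, specifically from its compact reformulation~\eqref{eq:CSM2}, by translating the alternating sum of the degrees of the components of $\csm(X\smallsetminus Q)$ into a single Euler characteristic. The bridge between the two is the general principle that for any constructible function $\varphi$ on a projective variety, the alternating sum $\sum_j (-1)^j \big(\int c_*(\varphi)\big)_j$ of the degrees of the dimension-$j$ components of its Chern--Schwartz--MacPherson class equals $\int c_*(\varphi|_{X\smallsetminus H'})$ for a general hyperplane $H'$; this is precisely the content of~\cite[Theorem~1.1]{MR3031565}, cited in~\S\ref{s:tran}. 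Iterating, or applying the hyperplane-section statement once to the locally closed set $X\smallsetminus Q$, one expects
\[
\sum_{j\ge 0}(-1)^j \csm(X\smallsetminus Q)_j = \chi\big((X\smallsetminus Q)\smallsetminus H\big) = \chi\big(X\smallsetminus(Q\cup H)\big)
\]
for $H$ general, since the degree of $c_*$ of a constructible function is the (constructible-function) Euler characteristic of its support, and $\chi$ of a locally closed set is additive. Combining this with~\eqref{eq:CSM2} gives~\eqref{eq:Euler} immediately.

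First I would recall the precise statement I need from~\cite[Theorem~1.1]{MR3031565}: if $V$ is a locally closed subset of $\Pbb^{N}$ and $H$ is a general hyperplane, then writing $\csm(V) = \sum_j a_j [\Pbb^j]$ in $A_*\Pbb^{N}$ (so $a_j = \csm(V)_j$ in the notation of the present paper), one has $\chi(V\smallsetminus H) = \sum_j (-1)^j a_j$. I would apply this with $V = X\smallsetminus Q$, viewed inside $\Pbb^{n-1}$ via the embedding $X\hookrightarrow \Pbb^{n-1}$; the pushforward of $\csm(X\smallsetminus Q)$ to $\Pbb^{n-1}$ has $j$-th component of degree $\csm(X\smallsetminus Q)_j$, exactly the quantity appearing in~\eqref{eq:CSM2}. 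The genericity of $H$ in~\cite{MR3031565} is genericity in the dual projective space, which is the same notion of ``general hyperplane'' used throughout this paper, so there is no compatibility issue.

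Then I would assemble the argument: by Theorem~\ref{thm:CSM} in the form~\eqref{eq:CSM2} (valid because $X$ is smooth, using $c(X) = \csm(X)$, and trivially true when $X\subseteq Q$ as noted in the remarks following Theorem~\ref{thm:CSM}),
\[
\Edd(X) = (-1)^{\dim X}\sum_{j\ge 0}(-1)^j \csm(X\smallsetminus Q)_j = (-1)^{\dim X}\,\chi\big((X\smallsetminus Q)\smallsetminus H\big)
\]
for $H$ general, and $(X\smallsetminus Q)\smallsetminus H = X\smallsetminus(Q\cup H)$, which yields~\eqref{eq:Euler}. I would also remark that consistency with the case $X\subseteq Q$ is automatic, since then $X\smallsetminus(Q\cup H) = \emptyset$ and $\chi(\emptyset)=0=\Edd(X)$.

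The main obstacle is simply ensuring that the cited hyperplane-section result for CSM classes is being invoked with the correct hypotheses: it must apply to a locally closed (not merely closed or open) subset $V = X\smallsetminus Q$ of projective space, and the ``general $H$'' in that theorem must be identified with the ``general hyperplane'' appearing in the statement of Theorem~\ref{thm:Euler}. Both points are genuinely in~\cite[Theorem~1.1]{MR3031565} — that reference is stated for arbitrary constructible functions, hence for $\one_V$ with $V$ locally closed — so the proof is short; the only real work is citing it correctly and matching notation. One should note that no transversality between $X$ and $Q$ is needed anywhere: the whole point is that CSM classes and the hyperplane-section formula behave well for the possibly-singular, possibly-noncompact locally closed set $X\smallsetminus Q$, which is why~\eqref{eq:Euler} holds with no hypothesis on $Q\cap X$ beyond $X\not\subseteq Q$.
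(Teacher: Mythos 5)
Your proposal is correct and follows essentially the same route as the paper: combine Theorem~\ref{thm:CSM} (in the form~\eqref{eq:CSM2}) with the hyperplane-section result of~\cite[Theorem~1.1]{MR3031565} applied to the locally closed set $X\smallsetminus Q$, yielding $\sum_j(-1)^j\csm(X\smallsetminus Q)_j=\chi(X\smallsetminus(Q\cup H))$. The only cosmetic difference is that the paper extracts this identity from the involution formula $\Gamma_V=\cI(\chi_V)$ via $\Gamma_V(-1)=\chi_V(0)+\chi_V'(0)$ (and applies it termwise to~\eqref{eq:CSM} rather than directly to $X\smallsetminus Q$), whereas you quote the resulting identity directly; this is the same argument in substance.
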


By the inclusion-exclusion property of the topological Euler characteristic, \eqref{eq:Euler}
is equivalent to
\begin{equation}\label{eq:Euler2}
\Edd(X) = (-1)^{\dim X} \left(
\chi(X)-\chi(X\cap Q)-\chi(X\cap H)+\chi(X\cap Q\cap H)
\right)
\end{equation}
which has the advantage of only involving closed subsets of $\Pbb^{n-1}$. 
Any of the aforementioned implementations of algorithms for characteristic classes of
singular varieties includes explicit functions to compute Euler characteristics of
projective schemes from defining homogenous ideals, 
so~\eqref{eq:Euler2} is also essentially trivial to implement. However, despite its
conceptual simplicity, this expression is computationally expensive.

\begin{remark}
As in~\S\ref{s:CSM}, we have to insist that $X$ be smooth; only requiring it to be
nonsingular in a neighborhood of $Q\cap X$ is not enough for the result to hold.
\qede\end{remark}

\begin{proof}
The statement is a consequence of Theorem~\ref{thm:CSM} and of a result
from~\cite{MR3031565}. Collect the degrees of the components of the CSM class of
a locally closed set $V\subseteq\Pbb^N$ into a polynomial: 
\[
\Gamma_V(t)= \sum_{j\ge 0} \csm(V)_j\, t^j\quad;
\]
and collect the signed Euler characteristics of generic linear sections of $V$ into another
polynomial:
\[
\chi_V(t)= \sum_{j\ge 0} (-1)^j \chi(V\cap H_1\cap \cdots \cap H_j)\, t^j
\]
where the $H_i$'s are general hyperplanes. Then according 
to~\cite[Theorem~1.1]{MR3031565} we have
\[
\Gamma_V(t) = \cI(\chi_V)\quad,
\]
where $\cI$ is an explicit involution. It follows from the specific expression of $\cI$ that
\[
\Gamma_V(-1) =\chi_V(0)+\chi_V'(0)
\]
(see the paragraph preceding the statement of Theorem~1.1 in~\cite{MR3031565}).
Therefore
\begin{equation}\label{eq:genEu}
\sum_{j\ge 0} (-1)^j \csm(V)_j = \chi(V)-\chi(V\cap H)
\end{equation}
for every locally closed set $V$ in projective space. 

The statement of the theorem, in the form given in~\eqref{eq:Euler2}, follows by
applying~\eqref{eq:genEu} to~\eqref{eq:CSM}.
\end{proof}

%%%

\section{Examples}\label{s:Ex}

\subsection{Computations}
The ingredients needed to implement the main theorems of this text on computer algebra
systems such as \textrm{Sage} \cite{sagemath} or \textrm{Macaulay2} \cite{M2} are 
all available. One can compute Segre classes and Chern-Mather classes via 
\cite{Harris201726} and Chern-Schwartz-MacPherson classes via (for example) any 
of \cite{MR1956868,MR3044490,Jost2013,MR3484270, MR3385954,Harris201726}.
One issue in concrete examples is that computer algebra systems prefer
to work with $\Qbb$-coefficients, and it is often difficult to write the defining equations 
of a variety which is tangent to the isotropic quadric without extending the field of 
coefficients. This difficulty can sometimes be circumvented by a suitable choice of
coordinates; see Remark~\ref{rem:QX}. Also see~\S\ref{ss:surfaces} below for a discussion
of a template situation. 

In many cases, the `standard' algorithm 
of~\cite[Example 2.11]{MR3451425} appears to be at least as fast as the alternatives
obtained by implementing the results presented here. In some examples these
alternatives are faster, particularly if they take advantage of the refinements which
will be presented below. As an illustration, we can apply Proposition~\ref{prop:planec}
(\S\ref{ss:curves}) to compute the ED degrees of plane curves in terms of a generator 
of their homogeneous ideal.
A (non-optimal) implementation of this method in \textrm{Macaulay2} can be coded 
as follows:
\begin{verbatim}
PP2 = QQ[x,y,z]; C = ideal( F )
S = QQ[s,t,i,Degrees=>{{1,0},{1,0},{0,1}}]/(i^2+1)
J = sub(C,{x=>s^2-t^2,y=>2*s*t,z=>i*(s^2+t^2)})
p = (first degrees radical J)#0 -- ignore degree of i
d = degree C
d*(d-2) + p
\end{verbatim}
where $F=F(x,y,z)$ is the defining homogeneous polynomial for the curve. 

For example, trial runs of computations of the ED degrees of Fermat curves
$x^d+y^d+z^d=0$ for all degrees $d=3,\dots, 40$ took an average of 4.5~seconds
using this method (in a more efficient implementation), and 260~seconds by using 
the standard algorithm. However, direct implementations of the general formulas 
presented in this paper do not fare as well.

The interested reader can find the actual code used here, as well as implementations
of the more general formulas at \url{http://github.com/coreysharris/EDD-M2} .
At this stage, the value of the
formulas obtained in Theorems~\ref{thm:segref}--\ref{thm:Euler} 
appears to rest more on their theoretical applications (in examples such as
the ones discussed below in~\S\ref{ss:Segre}) than in the speed of their computer algebra
implementations.

If the variety is known to be transversal to the isotropic quadric, then its Euclidean
distance degree equals the {\em generic\/} Euclidean distance degree. This may
be computed by using the algorithm for Chern(-Mather) classes in~\cite{Harris201726},
often faster than the standard algorithm. For example, let $S$ be a general hyperplane
section of the second Veronese embedding of $\Pbb^3$ in $\Pbb^9$. Then $S$ is
transversal to the isotropic quadric, and the implementation of the algorithm 
in~\cite{Harris201726} computes its Euclidean distance degree (i.e., $36$) in about 
$2$ seconds. The standard algorithm appears to take impractically long on this 
example; one can improve its performance by first projecting $S$ to a general $\Pbb^3$
(this does not affect the Euclidean distance degree,
by~\cite[Corollary~6.1]{MR3451425}), and the computation then takes about a minute. 

\subsection{Quadrics and Spheres}
Let $X\subseteq \Pbb^{n-1}$ be a nonsingular quadric hypersurface. We say that $X$ is
a {\em sphere\/} if it is given by the equation
\[
x_1^2+\cdots + x_{n-1}^2 = c x_n^2
\]
with $c>0$ a real number. It is clear from the definition in terms of critical points of the
distance function that $\Edd(X)=2$ if $X$ is a sphere in $\Pbb^{n-1}$, $n\ge 2$.

We use this example to illustrate some of the formulas obtained in this paper.

First, since $X$ is a degree~$2$ hypersurface in $\Pbb^{n-1}$,
\[
c(TX)=\frac{c(T\Pbb^{n-1}|_X)}{1+2h} = \frac{(1+h)^n}{1+2h}
\]
(where $h$ denotes the hyperplane class and its pull-backs, as in previous setions).
Applying~\eqref{eq:gEdd}, one easily sees that
\[
\gEdd(X) = 2n-2\quad,
\]
while
\[
c(T^*X\otimes \cO(2h))=\frac{(1-h+2h)^n}{(1+2h)(1-2h+2h)}=\frac{(1+h)^n}{1+2h}\quad.
\]
For a sphere $X\subseteq \Pbb^{n-1}$, the intersection $Q\cap X$ consists of a double 
quadric in $\Pbb^{n-2}$, supported on the transversal intersection $X\cap H$ of $X$ with a 
hyperplane. It follows that $J(Q\cap X)=X\cap H$, and therefore
\[
s(J(Q\cap X), X)=\frac {h\cdot [X]}{1+h}\quad.
\] 
According to Theorem~\ref{thm:segref}, the correction term in this case is given by 
\begin{align*}
\int \frac{(1+2h)\cdot c(T^*X\otimes \cO(2h))}{1+h}\cdot s(J(Q\cap X),X) 
&=\int \frac{(1+2h)(1+h)^n}{(1+h)(1+2h)}\cdot \frac {h\cdot 2h}{1+h}\cap [\Pbb^{n-1}] \\
&=\int 2(1+h)^{n-2}\cdot h^2 \cap [\Pbb^{n-1}]\\
&=2(n-2)\quad.
\end{align*}
By Theorem~\ref{eq:segref}, $\Edd(X)=(2n-2)-2(n-2) = 2$, as it should.

From the point of view of Theorem~\ref{thm:Euler}, we should deal with the topological
Euler characteristics of $X$, $X\cap Q$, $X\cap H$, $X\cap Q\cap H$, where $H$
is a general hyperplane (see~\eqref{eq:Euler2}). If $X$ is a sphere, then $X\cap Q$ is 
(supported on) a nonsingular quadric in $\Pbb^{n-2}$; so is $X\cap H$, and
$X\cap Q\cap H$ is a nonsingular quadric in $\Pbb^{n-3}$.
The Euler characteristic of a nonsingular quadric in $\Pbb^N$ is $N+1$ if $N$ is odd,
$N$ if $N$ is even; therefore
\[
\chi(X)-\chi(X\cap Q)-\chi(X\cap H)+\chi(X\cap Q\cap H) =
\begin{cases}
(n-1) -2(n-1) + (n-3) = -2 & \text{$n$ odd} \\
n -2(n-2) + (n-2) = 2 & \text{$n$ even}
\end{cases}
\]
and by Theorem~\ref{thm:Euler}
\[
\Edd(X) = (-1)^{\dim X} \chi(X\smallsetminus (Q\cup H)) = 2
\]
for all $n$, as expected.

\subsection{Hypersurfaces}\label{ss:hyper}
The case of smooth hypersurfaces of degree $\ge 3$ is more constrained than it
may look at first.

\begin{claim}\label{cla:32}
If two smooth hypersurfaces of degree $d_1$, $d_2$ in projective space are tangent
along a positive dimensional algebraic set, then $d_1=d_2$.
\end{claim}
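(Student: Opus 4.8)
The plan is to reduce the statement to a simple fact about tangent cones along the locus of tangency. Suppose $X_1 = \{F_1 = 0\}$ and $X_2 = \{F_2 = 0\}$ are smooth hypersurfaces of degrees $d_1, d_2$ in $\Pbb^{n-1}$, and that they are tangent along a positive-dimensional algebraic set $Y \subseteq X_1 \cap X_2$; that is, for every $p \in Y$ we have $T_p X_1 = T_p X_2$ (as hyperplanes in $\Pbb^{n-1}$). First I would pass to an irreducible curve $C \subseteq Y$ (possible since $\dim Y \ge 1$), and consider the Gauss maps $\gamma_i : X_i \dashrightarrow \cPbb^{n-1}$, $p \mapsto T_p X_i$, which are everywhere defined since the $X_i$ are smooth. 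The hypothesis says $\gamma_1|_C = \gamma_2|_C$.

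The key step is to extract a differential consequence of this coincidence. Along $C$, both $X_1$ and $X_2$ contain $C$ and have the same tangent hyperplane at each point of $C$; restricting the equation $F_i$ to a parametrized arc in $X_1$ through a general point $p \in C$ and differentiating, one finds that the gradient vectors $\nabla F_1(p)$ and $\nabla F_2(p)$ are proportional for all $p \in C$. Equivalently, $F_2 \cdot \nabla F_1 - F_1 \cdot \nabla F_2$ — wait, rather: the $2 \times 2$ minors of the matrix whose rows are $\nabla F_1(p)$ and $\nabla F_2(p)$ vanish identically on $C$. By Euler's identity, $x \cdot \nabla F_i = d_i F_i$, so contracting the proportionality $\nabla F_2(p) = \lambda(p)\,\nabla F_1(p)$ with the position vector $x$ of $p$ gives $d_2 F_2(p) = \lambda(p)\, d_1 F_1(p)$. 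Since $p \in C \subseteq X_1 \cap X_2$, both $F_1(p)$ and $F_2(p)$ vanish, so this identity is trivially satisfied and does not yet yield the claim — this is the main obstacle: one degenerate contraction is not enough, and I must instead track how the proportionality factor $\lambda$ varies.

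To get the degree constraint I would argue as follows. The function $\lambda$ extends to a rational function on $X_1$ (defined wherever $\nabla F_1 \ne 0$, i.e.\ everywhere by smoothness, as a map to $\Pbb^0$ on the locus where the two gradients agree — more carefully, consider the rational map $p \mapsto [\nabla F_1(p) : \nabla F_2(p)] \in \Pbb^1$, regular on a neighborhood of the general point of $C$). On $C$ this map is constant-valued in a fixed ratio only if the two sections agree; but $\nabla F_1$ is a section of $\cO_{X_1}(d_1 - 1)^{\oplus n}$ and $\nabla F_2$ a section of $\cO_{X_1}(d_2 - 1)^{\oplus n}$. Their proportionality along the positive-dimensional set $C$ means that the section $(\nabla F_1) \otimes (\text{something of degree } d_2 - d_1)$ and $\nabla F_2$ agree on $C$; comparing the line bundles forces $d_1 - 1 = d_2 - 1$, hence $d_1 = d_2$, \emph{unless} the common value is identically zero along $C$ — but $\nabla F_i$ has no zeros on $X_i$ by smoothness, ruling that out. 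I expect the cleanest write-up replaces the vague ``proportionality'' talk with: the two Gauss maps $\gamma_i : X_i \to \cPbb^{n-1}$ are given by linear systems $|\cO_{X_i}(d_i - 1)|$ (restricted sublinear systems of the partials), and agreeing on the curve $C$ together with $C$ lying on both hypersurfaces pins down $d_1 = d_2$ by a degree count on $C$ of the pulled-back hyperplane class under $\gamma_1|_C = \gamma_2|_C$. The main obstacle, then, is organizing the bookkeeping so that the common zero locus $C$ is handled correctly and the Euler-relation contraction is deployed at a non-degenerate point (e.g.\ using a nearby point or a first-order neighborhood of $C$ rather than $C$ itself), which is where the smoothness hypothesis does its real work.
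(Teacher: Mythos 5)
The paper does not actually prove this claim---it simply cites \cite[Claim~3.2]{MR1819626}---so there is no internal argument to compare against; judged on its own, your proposal is essentially correct, and its final form is the standard proof. The working core is: the gradient maps $p\mapsto[\nabla F_i(p)]$ are morphisms $\Pbb^{n-1}\to\cPbb^{n-1}$ given by the degree-$(d_i-1)$ partials, which have no common zero (on $X_i$ by smoothness, and off $X_i$ by the Euler relation); tangency along $Y$ says exactly that these two morphisms agree on $Y$, hence on an irreducible curve $C\subseteq Y$; pulling back $\cO_{\cPbb^{n-1}}(1)$ along the common restriction gives $\cO_C(d_1-1)\cong\cO_C(d_2-1)$, and taking degrees, $(d_1-1)\deg C=(d_2-1)\deg C$ with $\deg C>0$, so $d_1=d_2$. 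Your write-up reaches precisely this, but only after detours you should delete: the proportionality of $\nabla F_1(p)$ and $\nabla F_2(p)$ for $p\in C$ is immediate from $T_pX_1=T_pX_2$ (two hyperplanes coincide exactly when their defining linear forms are proportional), so no parametrized arcs or differentiation along $C$ are needed; and the Euler-identity contraction, which you correctly observe yields nothing on $C$, plays no role in the final argument---it is not an ``obstacle'' to be overcome but a step to be discarded. Similarly, the phrase about tensoring $\nabla F_1$ with ``something of degree $d_2-d_1$'' should be replaced by the cleaner statement that two coinciding morphisms from $C$ have isomorphic pullbacks of $\cO(1)$. With those trims, and the observation (which you do make) that smoothness prevents the gradients from vanishing along $C$, the degree count is legitimate and the proof is complete; this is presumably also the content of the external reference the paper leans on.
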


(This is~\cite[Claim~3.2]{MR1819626}.)
It follows that if $X\subset \Pbb^{n-1}$ is a smooth hypersurface of degree $d\ne 2$, 
then the intersection $Q\cap X$ necessarily has isolated singularities. We are then within
the scope of Corollary~\ref{cor:isol}, and we can conclude
\[
\Edd(X) = \gEdd(X) - \sum_i \mu(x_i)
\]
where the sum is over all singularities $x_i$ of $Q\cap X$, and $\mu(-)$ denotes the 
Milnor number.

\subsection{Curves}\label{ss:curves}
Let $C\subseteq \Pbb^{n-1}$ be a nonsingular curve. Then
\begin{equation}\label{eq:curves}
\Edd(C) = d + \#(Q\cap C) - \chi(C)\quad.
\end{equation}
(This follows immediately from Theorem~\ref{thm:Euler}.)

For example, the twisted cubic parametrized by
\[
(s:t) \mapsto (s^3:\sqrt 3 s^2 t : \sqrt 3 s t^2 : t^3)
\]
has $\Edd$ equal to $3$: indeed, it meets the isotropic quadric at the images of
the solutions of $s^6 + 3 s^4 t^2 + 3 s^2 t^4 + t^6=(s^2+t^2)^3=0$, that is, at two 
points. More generally, the Euclidean distance degree of the rational normal curve 
of degree $n-1$ in $\Pbb^{n-1}$ parametrized by
\[
(s:t) \mapsto \left( \sqrt{\binom {n-1}j} s^{n-1-j} t^j\right)_{j=0,\dots,n-1}
\]
is $(n-1)+2-2=n-1$.

For {\em plane\/} curves, \eqref{eq:curves} admits a particularly explicit formulation.

\begin{prop}\label{prop:planec}
Let $C$ be a nonsingular plane curve, defined by an irreducible homogeneous 
polynomial $F(x,y,z)$. Then
\[
\Edd(C) = d(d-2) + R
\]
where $R$ is the number of distinct factors of the polynomial
$F(s^2-t^2,2st,i(s^2+t^2))\in \Cbb[t]$ and $d=\deg F$.
\end{prop}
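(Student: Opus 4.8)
The plan is to specialize formula~\eqref{eq:curves} of Theorem~\ref{thm:Euler}, which for a nonsingular curve $C$ of degree $d$ reads $\Edd(C) = d + \#(Q\cap C) - \chi(C)$. Since $C\subseteq\Pbb^2$, the proposition amounts to the two identifications $d - \chi(C) = d(d-2)$ and $\#(Q\cap C) = R$.

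First I would handle $d-\chi(C)$. A nonsingular plane curve of degree $d$ has genus $g = \binom{d-1}{2}$, so $\chi(C) = 2 - 2g = 2 - (d-1)(d-2) = 3d - d^2$, whence $d - \chi(C) = d^2 - 2d = d(d-2)$. This accounts for the first summand.

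Next, the point is to count $Q\cap C$ via a rational parametrization of the isotropic conic. The conic $Q = \{x^2+y^2+z^2 = 0\}\subseteq\Pbb^2$ is smooth, and the map $\nu:\Pbb^1\to\Pbb^2$, $(s:t)\mapsto (s^2-t^2 : 2st : i(s^2+t^2))$ is an isomorphism of $\Pbb^1$ onto $Q$: one checks $(s^2-t^2)^2 + (2st)^2 + \big(i(s^2+t^2)\big)^2 = 0$ identically, and that $\nu$ is one-to-one (e.g. $(s:t)$ is recovered from a point of $Q$ as $(x-iz:y)$, resp.\ $(y:x+iz)$). Since $C\not\subseteq Q$ by our blanket assumption, the pullback $\nu^*F = F(s^2-t^2,2st,i(s^2+t^2))$ is a nonzero binary form of degree $2d$, and $\nu$ restricts to a bijection from the set of its distinct roots on $\Pbb^1$ onto $Q\cap C$. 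Over $\Cbb$ the distinct roots of a binary form are precisely its distinct linear factors, which — after dehomogenizing at $s=1$ — are the distinct factors of $F(1-t^2,2t,i(1+t^2))\in\Cbb[t]$. Hence $\#(Q\cap C) = R$ and the formula follows.

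None of these steps is a real obstacle; the proposition is essentially bookkeeping on top of Theorem~\ref{thm:Euler}. The only place demanding care is the translation between ``distinct points of $Q\cap C$'' and ``distinct factors of the dehomogenized polynomial'': one must use that $\nu$ is injective (so that intersection multiplicities do not inflate the count) and keep track of the point $(s:t)=(1:0)$, i.e.\ $(1:0:i)$, which is invisible after setting $s=1$ and must be counted separately when it lies on $C$ (equivalently, count distinct linear factors of the binary form $\nu^*F$ directly, as the radical computation in the accompanying code does).
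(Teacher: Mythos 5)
Your proof is correct and follows essentially the same route as the paper: specialize the curve formula $\Edd(C) = d + \#(Q\cap C) - \chi(C)$ from Theorem~\ref{thm:Euler}, compute $d-\chi(C)=d(d-2)$ via the genus of a smooth plane curve, and count $\#(Q\cap C)$ through the parametrization $(s:t)\mapsto(s^2-t^2, 2st, i(s^2+t^2))$ of the isotropic conic. Your extra care about injectivity of the parametrization and the point $(s:t)=(1:0)$ lost under dehomogenization is a sensible refinement of a detail the paper leaves implicit.
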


\begin{proof}
This follows immediately from~\eqref{eq:curves}, after observing that
$d-\chi(C)=d-(2-(d-1)(d-2))=d(d-2)$ and that the isotropic conic $x^2+y^2+z^2=0$
is parametrized by $(s:t) \mapsto (s^2-t^2, 2st, i(s^2+t^2))$.
\end{proof}

For instance, consider the conic $x^2+2y^2+2iyz=0$. Since
\[
(s^2-t^2)^2 + 2(2st)^2 +2i(2st)(i (s^2+t^2)) =(s-t)^4\quad,
\]
we have $R=1$, therefore its Euclidean distance degree is $2\cdot 0+1=1$. 

For another example, the Fermat quintic $C$: $x^5+y^5+z^5=0$ 
has $R=8$ (as Macaulay2 can verify), therefore $\Edd(C) = 5\cdot 3 + 8 = 23$ 
(cf.~\cite[Example~2.5]{MR3451425}). More generally, the Euclidean distance degree
of the Fermat curve $x^d + y^d + z^d=0$ is $d(d-2)+R$, 
where $R$ is the number of distinct factors of the polynomial
\[
(s^2-t^2)^d + (2st)^d + (i(s^2+t^2))^d\quad.
\]
An explicit expression for the Euclidean distance degree of Fermat hypersurfaces
in any dimension may be found in~\cite[Theorem~4]{MR3574523}.

\subsection{Surfaces}\label{ss:surfaces}
According to Theorem~\ref{thm:Euler}, if $S\subseteq\Pbb^{n-1}$ is a smooth degree-$d$
surface, and $C$ is the support of the intersection $Q\cap S$ (which may very well be
singular), then
\[
\Edd(S)=\chi(S)-\chi(S\cap H) -\chi(C) + \deg(C)\quad,
\]
where $H$ is a general hyperplane. If $n-1=3$, then $\chi(S)=d(d^2-4d+6)$ and
$\chi(S\cap H)=3d-d^2$; for $d\ne 2$, $C$ is necessarily reduced (Claim~\ref{cla:32}), 
so $\deg(C)=2d$. In this case ($S\subseteq \Pbb^3$ a smooth surface of degree $d\ne 2$,
or more generally such that $S\cap Q$ is reduced)
\begin{equation}\label{eq:surfP3}
\Edd(S) = d(d^2-4d+6)-(3d-d^2)-\chi(C)+2d = d(d^2-3d+5) -\chi(C)\quad.
\end{equation}
If $C$ is nonsingular, then $\chi(C)=-2d(d-2)$, and $\Edd(S)=\gEdd(S)=d(d^2-d+1)$.

If $S$ is a plane in $\Pbb^3$, tangent to the isotropic quadric $Q$, then $C=Q\cap S$
is a pair of lines, and~\eqref{eq:surfP3} gives $\Edd(S)=0$. But note that the coefficients
of the equation of this plane are necessarily not all real, so the enumerative interpretation
of $\Edd(S)$ as the number of critical points of a `distance' function should be taken
{\it cum grano salis.\/}

Next let $S$ be a Veronese surface in $\Pbb^5$, described parametrically by
\[
(s:t:u) \mapsto (a_1 s^2: a_2 st: a_3 su : a_4 t^2 : a_5 tu : a_6 u^2)
\]
with $a_1\cdots a_6\ne 0$. 
According to Theorem~\ref{thm:Euler},
\[
\Edd(S) = 3-2-\chi(C) +2\deg C= 2\deg C - \chi(C) + 1\quad,
\]
where $C$ is the support of the curve with equation
\begin{equation}\label{eq:vero}
a_1^2 x^4 + a_2^2 x^2 y^2 + a_3^2 x^2 z^2 + a_4 y^4 + a_5 y^2 z^2 + a_6 z^4=0
\end{equation}
in the plane. (The degree of the image of $C$ in $\Pbb^5$ is $2\deg C$.)

For example, if $C$ is a smooth quartic (the `generic' case), then $\chi(C) = -4$ and 
$\Edd(S)=\gEdd(S)=13$. If the rank of the matrix
\begin{equation}\label{eq:veroma}
\begin{pmatrix}
2a_1^2 & a_2^2 & a_3^2 \\
a_2^2 & 2a_4^2 & a_5^2 \\
a_3^2 & a_5^2 & 2a_6^2
\end{pmatrix}
\end{equation}
is~$1$, then~\eqref{eq:vero} is a double (smooth) conic, so that $\chi(C)=\deg(C)=2$ 
and $\Edd(S)=3$. For example, this is the case for
\begin{equation}\label{eq:doublecon}
(s:t:u) \mapsto (s^2: \sqrt 2\, st: \sqrt 2\, su : t^2 : \sqrt 2\, tu : u^2)\quad.
\end{equation}
If the rank of~\eqref{eq:veroma} is~$2$, then~\eqref{eq:vero} factors as a product
\[
(a' x^2 +b'y^2+c'z^2)(a'' x^2 +b''y^2+c''z^2) = 0
\]
and the factors are different and correspond to nonsingular conics. If these conics meet
transversally, then $\Edd(S)=9$; if they are `bitangent', then $\Edd(S)=7$
(use Corollary~\ref{cor:isol}, or again Theorem~\ref{thm:Euler}).
Explicit examples of these two types are
\[
(s:t:u) \mapsto (s^2: \sqrt 3\, st: 2\, su : \sqrt 2\, t^2 : \sqrt 5\, tu : \sqrt 3\,u^2)
\]
and
\[
(s:t:u) \mapsto (s^2: \sqrt 3\, st: \sqrt 2\, su : \sqrt 2\, t^2 : \sqrt 3\, tu : u^2)\quad.
\]

More general Veronese embeddings are considered in~\S\ref{ss:Segre}.

Note that we could equivalently hold the surface $S=X$ fixed, choosing for
example the standard Veronese embedding, parametrized by 
$(s:t:u) \mapsto (s^2: st: su : t^2 : tu : u^2)$
with ideal
\[
(x_1 x_4 - x_2^2 , x_1 x_5 - x_2 x_3, x_1 x_6 - x_3^2,
x_2 x_5 - x_3 x_4, x_2 x_6 - x_3 x_5, x_4 x_6 - x_5^2)
\]
in $\Pbb^5_{(x_1:\cdots :x_6)}$, and consider a more general nonsingular quadric
\[
Q:\, q_1 x_1^2 + q_2 x_2^2 + \cdots + q_6 x_6^2 = 0\quad,
\]
$q_1\cdots q_6\ne 0$,
in place of the isotropic quadric. This corresponds to a change of coordinates
$x_i \mapsto \sqrt{q_i} x_i$; i.e., $q_i=a_i^2$ with notation as above.
The right-hand side $\Edd(Q,X)$ of~\eqref{eq:segref} (or equivalently 
\eqref{eq:milnor}, \eqref{eq:CSM}, \eqref{eq:Euler}) is independent
of the coordinate choice, cf.~Remark~\ref{rem:QX}. For example, choosing
\[
x_1^2+2 x_2^2+2 x_3^2+x_4^2+2 x_5^2+x_6^2 = 0
\]
for $Q$, along with the standard Veronese embedding, is equivalent to choosing
the standard isotropic quadric along with the embedding~\eqref{eq:doublecon}
(and hence $\Edd(Q,X)=3$ in this case).

This observation may be useful in effective computations, since computer algebra
systems prefer to work with $\Qbb$ coefficients.

\subsection{Segre and Segre-Veronese varieties}\label{ss:Segre}
Let $X$ be the image of the usual Segre embedding
\[
\Pbb(\Cbb^{m_1})\times \cdots \times \Pbb(\Cbb^{m_p}) \to 
\Pbb(\Cbb^{m_1}\otimes \cdots \otimes \Cbb^{m_p})\quad,
\]
that is,
$\Pbb^{m_1-1}\times \cdots \times \Pbb^{m_p-1} \to \Pbb^{m_1\cdots m_p -1}$.
This embedding maps a point
\[
\big(
(s_1^1:\cdots: s_{m_1}^1),\dots, (s_1^p:\cdots: s_{m_p}^p)
\big)
\]
to the point in $\Pbb^{m_1\cdots m_p -1}$ whose homogeneous coordinates $(\ux)$
are all the monomials of multidegree $(1,\dots, 1)$ in the variables $\us^1,\dots, \us^p$.
The equation $\sum_i x_i^2=0$ of the isotropic quadric pulls back to 
\[
\big(\sum_i (s_i^1)^2\big)\cdots \big(\sum_i (s_i^p)^2\big)=0\quad.
\]
Let $Q_i$ be the isotropic quadric in the $i$-th factor. Then this shows that
\[
Q\cap X = (Q_1 \times \Pbb^{m_2-1}\times \cdots \times \Pbb^{m_p-1})
\cup\cdots \cup
(\Pbb^{m_1-1}\times \cdots \times \Pbb^{m_{p-1}-1}\times Q_p)\quad.
\]
It follows that $Q\cap X$ is a divisor with normal crossings and nonsingular components.
Denoting by $h_i$ the hyperplane class in the $i$-th factor, the class of the $i$-th
component is~$2h_i$. By Corollary~\ref{cor:SNC}, 
\begin{equation}\label{eq:Segre}
\Edd(X) = \int\frac 1{1-h_1-\cdots -h_p}\cdot \frac{(1-h_1)^{m_1}\cdots (1-h_p)^{m_p}}
{(1-2h_1)\cdots (1-2h_p)}\cap [X]\quad.
\end{equation}
The conclusion is that 
$\Edd(\Pbb^{m_1-1}\times \cdots \times \Pbb^{m_p-1})$
equals the coefficient of $h_1^{m_1-1}\cdots h_p^{m_p-1}$ in the expansion of
\[
\frac 1{1-h_1-\cdots -h_p}\cdot \prod_{i=1}^p \frac{(1-h_i)^{m_i}}{1-2h_i}\quad.
\]
Friedland and Ottaviani obtain a different expression for the same quantity: they 
prove (\cite[Theorem 4]{MR3273677}, cf.~\cite[Theorem 8.1]{MR3451425}) 
that it must equal the coefficient of
$z_1^{m_1-1}\cdots z_p^{m_p-1}$ in the expression
\begin{equation}\label{eq:FO}
\prod_{i=1}^p \frac{\hat z_i^{m_i} - z_i^{m_i}}{\hat z_i -z_i}
\end{equation}
where $\hat z_i = (z_1+\cdots + z_p)-z_i$. These coefficients must be equal, 
since they both compute the Euclidean distance degrees of Segre varieties.
We note that, for example, 
\[
\Edd(\Pbb^2\times \Pbb^8\times \Pbb^{11}\times \Pbb^{13}\times \Pbb^{24})=
1430462027777307645494624
\]
according to {\em both\/} formulas.

The same technique may be used to deal with {\em Segre-Veronese varieties,\/}
obtained by composing a Segre embedding with a product of Veronese embeddings:
{\small
\[
\Pbb(\Cbb^{m_1})\times\cdots\times \Pbb(\Cbb^{m_p})
\to \Pbb(\Sym^{\omega_1}\Cbb^{m_1}) \times \cdots \times \Pbb(\Sym^{\omega_p}\Cbb^{m_p})
\to \Pbb(\Sym^{\omega_1}\Cbb^{m_1}\otimes \cdots \otimes \Sym^{\omega_p}\Cbb^{m_p})\,.
\]}
Using general coordinates for the Veronese embeddings, each $Q_i$ (with notation as above)
restricts to a smooth hypersurface of degree $2\omega_i$, and the resulting hypersurfaces of the
product meet with normal crossings. The hyperplane class restricts to $\omega_1 h_1 +
\dots + \omega_p h_p$, therefore (again by Corollary~\ref{cor:SNC})
the EDdegree of this variety equals the coefficient of 
$h_1^{m_1-1}\cdots h_p^{m_p-1}$ in the expansion of
\begin{equation}\label{eq:SegVerg}
\frac 1{1-\omega_1 h_1-\cdots -\omega_p h_p}\cdot \prod_{i=1}^p 
\frac{(1-h_i)^{m_i}}{1-2\omega_i h_i}\quad.
\end{equation}
Friedland and Ottaviani also consider Segre-Veronese varieties, but they choose 
suitably invariant coordinates in each factor; this is a different problem. 
(For $p=1$, $m_1=\omega_1=2$, this choice of coordinates is given 
by~\eqref{eq:doublecon}.) They
prove (\cite{MR3273677}, \cite[Theorem~8.6]{MR3451425}) that with these
special coordinates the EDdegree is given again by the coefficient 
of $z_1^{m_1-1}\cdots z_p^{m_p-1}$ in~\eqref{eq:FO}, but where now 
$\hat z_i = (\omega_1 z_1+\cdots + \omega_p z_p)-z_i$.
From our point of view, the choice of coordinates affects the restrictions of the
isotropic quadrics $Q_i$ to the factors. With the invariant coordinates used by
Friedland and Ottaviani each $Q_i$ restricts to a multiple {\em quadric,\/} and
this affects the denominator of~\eqref{eq:SegVerg}: the resulting 
EDdegree equals the coefficient of $h_1^{m_1-1}\cdots h_p^{m_p-1}$ in the 
expansion of
\begin{equation}\label{eq:SegreVer}
\frac 1{1-\omega_1 h_1-\cdots -\omega_p h_p}\cdot \prod_{i=1}^p 
\frac{(1-h_i)^{m_i}}{1-2 h_i}\quad.
\end{equation}
Therefore, this coefficient must agree with the one obtained with the 
Friedland-Ottaviani formula.
(It does not seem combinatorially trivial that this should be the case in general; 
it is easy to verify that both formulas yield $((\omega_1-1)^{m_1}-1)/(\omega_1-2)$ 
for $p=1$.)

%%%

\bibliographystyle{halpha}
\bibliography{EDDbib}

\end{document}